\documentclass[10pt, leqno]{article}

\usepackage{amsmath,amssymb,amsthm, epsfig}

\usepackage{hyperref}

\usepackage{amsmath}

\usepackage{amssymb}

\usepackage{hyperref}

\usepackage{color}

\usepackage{ulem}

\usepackage{epsfig}

\usepackage{dsfont}

\usepackage{mathrsfs}

\usepackage{wasysym}



\title{On the H\" older regularity for solutions of integro-differential equations like the anisotropic fractional Laplacian}

\author{\it by \smallskip \\
E. B. dos Santos \quad $\&$ \quad R. Leit\~ao 
    \footnote{ \noindent \textsc{dos Santos}.
Universidade Federal do Cear\'{a} - UFC. Department of Mathematics. Fortaleza - CE, Brazil - 60455-760.
\texttt{E-mail address: elizaphanbraga@yahoo.com.br}
    }    
\footnote{
\noindent \textsc{R. Leit\~ao}.
Universidade Federal do Cear\'{a} - UFC. Department of Mathematics. Fortaleza - CE, Brazil - 60455-760.
\texttt{E-mail address: rleitao@mat.ufc.br}
}        
                                                                                        }



\newlength{\hchng}
\newlength{\vchng}
\setlength{\hchng}{0.55in} \setlength{\vchng}{0.55in}
\addtolength{\oddsidemargin}{-\hchng}
\addtolength{\textwidth}{2\hchng} \addtolength{\topmargin}{-\vchng}
\addtolength{\textheight}{2\vchng}



\def \dist {\mathrm{dist}}


\newtheorem{theorem}{Theorem}[section]
\newtheorem{lemma}[theorem]{Lemma}

\newtheorem{corollary}[theorem]{Corollary}
\theoremstyle{definition}

\newtheorem{definition}[theorem]{Definition}

\theoremstyle{remark}
\newtheorem{remark}[theorem]{Remark}

\numberwithin{equation}{section}

\numberwithin{equation}{section}






\newcommand{\intav}[1]{\mathchoice {\mathop{\vrule width 6pt height 3 pt depth  -2.5pt
\kern -8pt \intop}\nolimits_{\kern -6pt#1}} {\mathop{\vrule width
5pt height 3  pt depth -2.6pt \kern -6pt \intop}\nolimits_{#1}}
{\mathop{\vrule width 5pt height 3 pt depth -2.6pt \kern -6pt
\intop}\nolimits_{#1}} {\mathop{\vrule width 5pt height 3 pt depth
-2.6pt \kern -6pt \intop}\nolimits_{#1}}}



\begin{document}
\maketitle

\begin{abstract}
In this paper we study integro-differential equations like the anisotropic fractional Laplacian. As in [Silvestre, Indiana Univ. Math. J. 55, 2006], we adapt the De Giorgi technique to achieve the $C^{\gamma}$-regularity for solutions of class $C^{2}$ and use the geometry found in [Caffarelli, Leitão, and Urbano, Math. Ann. 360, 2014] to get an ABP estimate, a Harnack inequality and the interior $C^{1, \gamma}$ regularity for viscosity solutions.
\bigskip

\noindent{\sc Key words}: Fractional Laplacian, integro-differential equations, regularity theory, anisotropy.

 \noindent{\sc AMS Subject Classification MSC 2010}: 26A33; 35J70; 47G20, 35J60, 35D35, 35D40, 35B65

\tableofcontents

\end{abstract} 

\section{Introduction}

In \cite{LEITAO}, the second author presents the anisotropic fractional Laplacian

\begin{eqnarray}
\label{anisotropic fractional Laplacian}
(- \Delta)^{\beta, s} f(x) = C_{\beta, s} \int_{\mathbb{R}^{n}} \dfrac{f(x) - f(\varsigma)}{\left( \sum_{i=1}^{n} \vert \varsigma_{i} - x_{i} \vert^{b_{i}} \right)^{\frac{c+s}{2}}} d\varsigma,
\end{eqnarray}
where $\beta = \left( b_{1}, \dots, b_{n} \right) \in \mathbb{R}^{n}$ represents the different homogeneities in different directions, $b_{i} > 0$, $0 < s < 2$, $c = \displaystyle \sum_{i=1}^{n} \frac{2}{b_{i}}$ and $C_{\beta, s} > 0$ is a normalization constant. In this work we develop a regularity theory for integro-differential equations like the anisotropic fractional Laplacian
\begin{equation}\label{MAIN EQUATION}
L_{}u\left( x \right) = 0,
\end{equation}
where
\begin{equation}\label{operator}
L_{}u\left( x \right) :=  \int_{\mathbb{R}^{n}}\left( u\left( x + y \right) - u\left(x \right) - \chi_{B_{1}}\left( y \right) \nabla u \left( x \right)\cdot y \right) \mathcal{K}_{}\left( y\right)dy,
\end{equation}
$0 < s < \frac{4}{b_{\max}}$, and the kernel $\mathcal{K}_{}$ is symmetric, $\mathcal{K}_{}(y)=\mathcal{K}_{}(-y)$, and satisfy the anisotropic bounds
\begin{equation}\label{Kernel cond 2}  
\dfrac{\lambda \ q_{\max, s} }{\Vert y \Vert^{c+s} } \leq \mathcal{K}_{}\left( y \right) \leq   \dfrac{\Lambda \ q_{\max, s} }{\Vert y \Vert^{c+s}}, \quad \forall y \in \mathbb{R}^{n} \setminus \left\lbrace 0 \right\rbrace,
\end{equation}
where $0< \lambda \leq \Lambda$ and we denote $b_{\max}=\max\left\lbrace b_{1}, \dots, b_{n} \right\rbrace$, 
$$\Vert y \Vert^{2} = \sum_{i=1}^{n} \vert y_{i} \vert^{b_{i}} \quad \text{and} \quad q_{\max,s}=\dfrac{4}{b_{\max}} - s.$$

Integro-differential equations appear in the context of discontinuous stochastic processes. For example, competitive stochastic games with two or more players, which are allowed to choose from different strategies at every step in order to maximize the expected value of some function at the first exit point of a domain. Integral operators like \eqref{anisotropic fractional Laplacian} correspond to purely jump processes when diffusion and drift are neglected. The anisotropic setting we consider also appears in the context of magnetic resonance imaging (MRI) of the human brain (cf. \cite{MM, HS}), anomalous diffusion (cf. \cite{OTSBG}), biological tissues (cf. \cite{OTSBG, HM}), financial mathematics (see \cite{R, CLU}).

The main difference between the fractional Laplacian $(- \Delta)^{s}$ and the anisotropic fractional Laplacian $(- \Delta)^{\beta, s}$ is the geometry determined by the kernel
$$
\mathcal{K}(y) = \dfrac{1}{\Vert y \Vert^{c+s}}.
$$
In the seminal work \cite{CLU}, this anisotropic geometry required a refinement of the techniques presented in \cite{CS}: for example, a new covering lemma and a suitable scaling. Recently, in \cite{LEITAO}, the second author studied an extension problem related to anisotropic fractional Laplacian and a riemannian metric $g$ was crucial to get an anisotropic version of the Almgren's frequency formula obtained in \cite{CS 1}.

The paper is divided into two parts. In the sequel, we comment on the strategies to achieve our results:
\vspace{0,1cm}\\
1. (Smooth solution). In the first part of the paper, we will show that the De Giorgi's approach, see \cite{DEGIORGI, LANDIS}, allows us to reach the $C^{\gamma}$-regularity for smooth solutions $u$ of \eqref{MAIN EQUATION}, where the estimates do not depend on the norm of any derivative or modulus of continuity of $u$. As in \cite{S}, we will control the behavior of a solution $u$ of \eqref{MAIN EQUATION} away from the origin to obtain a Growth Lemma and use an iterate argument to get the desired regularity. In this analysis, two tools are crucial:  barrier function and suitable scaling. In fact, in order to find an appropriate way to control the behavior of $u$ away from the origin in the isotropic case \cite{S}, Silvestre established an interesting inequality involving radial barriers $\eta$ and the kernel $\mathcal{K}$: 
\vspace{0,3cm}\\
\textbf{Silvestre inequality}. Given a $\delta> 0$, there exist $\kappa > 0$ and $\tau >0 $ only depending on $\beta$, dimension $n$, $s$ and $\delta$ such that for all $r>0$ and $x_{} \in \mathbb{R}^{n}$:
\begin{eqnarray}
\label{Fundamental estimate}
\kappa L_{r}\eta(x) + 2\int_{\mathbb{R}^{n} \setminus B_{\frac{1}{4}}}  (\vert 8 y \vert^{\tau} - 1 )\mathcal{K}_{}(ry)r^{n}dy <  \dfrac{1}{2} \inf_{\mathcal{B} \subset B_{2}, \ \vert \mathcal{B}  \vert < \delta}\int_{\mathcal{B}}\mathcal{K}_{}(ry)r^{n}dy, 
\end{eqnarray}
where 
$$L_{r}v(x) :=  \int_{\mathbb{R}^{n}}\left( v\left( x + y \right) - v\left(x \right) - \chi_{B_{1}}\left( ry \right) \nabla v \left( x \right)\cdot y \right) \mathcal{K}_{}\left( ry\right)r^{n}dy.$$
The Silvestre inequality reveals the appropriate scaling for our analysis: the scaling determined by the kernel $\mathcal{K}$. Furthermore, the barrier functions $\eta$ should satisfy the bounds:
\begin{eqnarray}
\label{}
-C \leq L_{r}\eta(x) \leq C,
\end{eqnarray}
for some positive constant $C$ depending on $\beta$, dimension $n$, and $s$. In our case, we will use radial functions as barrier functions and the anisotropic scaling $T_{\beta, r}: \mathbb{R}^{n} \rightarrow \mathbb{R}^{n}$ defined by
\begin{eqnarray}
\label{}
T_{\beta, r} e_{i} = r^{\frac{2}{b_{i}}} e_{i},
\end{eqnarray}
where $e_{i}$ is the $i$-th canonical vector, to get the anisotropic Silvestre inequality and access to the $C^{\gamma}$-regularity.

2. (Viscosity solution). In the second part of the paper, we get the regularity theory established in \cite{CS, CLU} for viscosity solutions of non-local Isaac's equation like the anisotropic fractional Laplacian

\begin{equation}\label{def F1}
\mathcal{I}u\left( x \right) := \inf\limits_{\alpha}\sup\limits_{\beta} L_{\alpha \beta}u\left( x \right)= 0, 
\end{equation}
where $L_{\alpha, \beta}$ is as in \eqref{MAIN EQUATION}.  An important example of the equation \eqref{def F1} was studied in \cite{CLU}. In fact, if
\begin{eqnarray} 
\label{CLU} 
b_{i} = n + \sigma_{i} \quad \text{and} \quad s = 2 - c
\end{eqnarray}
where $\sigma_{i} \in (0, 2)$ we have
\begin{eqnarray}
c_{\sigma} = q_{\max, s} \quad \text{and} \quad \Vert y \Vert^{c+s} =  \sum_{i=1}^{n} \vert y_{i} \vert^{n+\sigma_{i}} 
\end{eqnarray}
for $\sigma=\left( \sigma_{1}, \dots, \sigma_{n} \right)$. In \cite{CS, CLU}, the key that gives access to the regularity theory to viscosity solutions $u$ of the equation \eqref{def F1} is a non-local ABP estimate. In \cite{CLU}, the correct geometry to reach a non-local ABP estimate for integro-differential equation governed by anisotropic kernels $\mathcal{K}_{\alpha, \beta}$ was discovered. More precisely, the geometry determined by the level sets of the kernels $\mathcal{K}_{\alpha \beta}$:
$$
 \Theta_{r}\left( x \right) := \left\lbrace \left( y_{1}, \dots, y_{n}\right) \in \mathbb{R}^{n} : \Vert y - x \Vert  < r^{} \right\rbrace.
$$

With this geometry at hand, three steps are fundamental to obtain a non-local ABP estimate, a Harnack Inequality and the desired regularity:

\begin{enumerate}

\item $u$ \textbf{stays quadratically close} to the tangent plane to concave envelope $\Gamma$ of $u$ in a (large) portion of the neighbourhoods of the contact points and such that, in smaller neighbourhoods (with the same geometry), the concave envelope $\Gamma$ has quadratic growth: here, our neighbourhoods are ellipses $E_{r, 1}$ with the same geometry of $\Theta_{r}$.

\item \textbf{Covering Lemma}. Since our neighbourhoods will be ellipses $E_{r,1}$, our covering is naturally made of $n$-dimensional rectangles $\mathcal{R}_{r}$ and we invoke a covering lemma from \cite{CCal}.

\item \textbf{A barrier function}. We use the natural anisotropic scaling $T_{\beta, r}$ and a radial function to build an adequate barrier function and, together with the nonlocal anisotropic version of the ABP estimate, we get a lemma that links a pointwise estimate with an estimate in measure, Lemma \ref{Point Estimates 1}. This is the crucial step towards a regularity theory. The iteration of Lemma \ref{Point Estimates 1} implies the decay of the distribution function $\lambda_{u}:= \left| \left\lbrace u > t \right\rbrace \right |$ and the tool that makes this iteration possible is the so called Calder\'on -Zygmund decomposition. Since our scaling is anisotropic we need a Calder\'on -Zygmund decomposition for $n$-dimensional rectangles generated by our scaling. A fundamental device we use for that decomposition is the Lebesgue differentiation theorem for $n$-dimensional rectangles that satisfy the condition of Caffarelli-Calder\'on in \cite{CCal}. Hence we obtain the Harnack inequality and, as a consequence, we achieve the interior $C^{\gamma}$ regularity for a solution $u$ of equation \eqref{def F1} and, under additional assumptions on the kernels $\mathcal{K}_{\alpha \beta}$, interior $C^{1, \gamma}$ estimates.

\end{enumerate}

Finally, we emphasize that the restriction $0 < s < 4 / b_{\max}$ in our results comes from the class of solutions $u$ we are studying: solutions of class $C^{2}$ or viscosity solutions ($u$ is touched by a $C^{2}$ function). However, we believe that the results obtained here can naturally be extended for $0 < s < 2$ if we consider an appropriate class of solutions $u$ and change the metric of $\mathbb{R}^{n}$, a namely, $(\mathbb{R}^{n}, g$), where $g$ is the metric determined by kernel $\Vert \cdot \Vert$, see \cite{LEITAO}. We plan to address this issue in a forthcoming paper. Furthermore, the Lemma \cite{CCal2} allows the homogeneity degrees $b_{i}$ depend on $x$, see \cite{CTU}. We would also like to mention that in \cite{CK} an important regularity theory for integro-differential equations was developed, where the kernels are singular, and only charge the coordinate axes for the jumps, and each axis may charge jumps with a different exponent.

The paper is organized as follows. In section \ref{Preliminaries} we gather all the necessary tools for our analysis: fundamental geometry, Silvestre inequality, the notion of viscosity solution for the problem \eqref{def F1}, the extremal operators of Pucci type associated with the family of kernels $K_{\alpha \beta}$ and some notation. In Section \ref{H Regularity: smooth solutions} we present the proof of $C^{\gamma}$-regularity of smooth solutions and as a corollary we get a result type Liouville. The Section \ref{H Regularity: viscosity solutions} is divided in three subsections: \ref{ABP Estimate}, where the nonlocal ABP estimate for a solution $u$ of equation \eqref{def F1} is obtained, is the most important of the paper. Sections \ref{Barrier function section} and \ref{Harnack Inequality Section} are devoted to the proof of the Harnack inequality and its consequences. 
\section{Preliminaries}
\label{Preliminaries}

In this section we gather anisotropic versions of some results obtained in \cite{S, CLU}. We begin with geometric informations that we will systematically use along the work. \\

Given $ r, l > 0$ and $x \in \mathbb{R}^{n}$, we will denote 

$$
E^{}_{r, l}\left( x \right) := \left\lbrace \left( y_{1}, \dots, y_{n}\right) \in \mathbb{R}^{n} : \sum_{i=1}^{n} \frac{\left( y_{i} - x_{i} \right)^{2}}{r^{\frac{4}{b_{i}}}} < l^{2} \right\rbrace.
$$
If $b_{\min}=\min\left\lbrace b_{1}, \dots, b_{n} \right\rbrace$ and $b_{\max} = \max \left\lbrace b_{1}, \dots, b_{n} \right\rbrace$ we define
$$
R^{}_{r,l}\left( x \right) := \left\lbrace \left( y_{1}, \dots, y_{n}\right) \in \mathbb{R}^{n} : | y_{i} - x_{i} | < l^{\frac{2}{b_{\min}}} r^{\frac{2}{b_{i}}} \right\rbrace 
$$
and 
$$
E^{\max}_{r, l}\left( x \right) := \left\lbrace \left( y_{1}, \dots, y_{n}\right) \in \mathbb{R}^{n} : \sum_{i=1}^{n} \frac{\left( y_{i} - x_{i} \right)^{2}}{r^{2\frac{b_{\max}}{b_{i}}}} < l^{2} \right\rbrace.
$$
Furthermore, if $\mathfrak {C}  = \mathfrak {C}> 0$ is a natural number and the $n$-dimensional rectangle
$$
R(x):= \left\lbrace \left( y_{1}, \dots, y_{n}\right) \in \mathbb{R}^{n} : | y_{i} - x_{i} | < l_{i} \right\rbrace 
$$
satisfies
$$
R^{}_{}\left( x \right) \subset \left\lbrace \left( y_{1}, \dots, y_{n}\right) \in \mathbb{R}^{n} : | y_{i} - x_{i} | < 2^{-\mathfrak{C}(k + 1)} r^{\frac{2}{b_{i}}} \right\rbrace, 
$$
for some number natural $k$, we define the corresponding $n$-dimensional rectangle  $\tilde{R}^{}_{}(x)$ by
$$
\tilde{R}^{}_{}\left( x \right) := \left\lbrace \left( y_{1}, \dots, y_{n}\right) \in \mathbb{R}^{n} : | y_{i} - x_{i} | <   \left[ 2^{-\mathfrak{C}\left( \frac{b_{\min}}{2}\right)k} r \right]^{\frac{2}{b_{i}}} \right\rbrace. 
$$

We will also consider the notation 
$$B_{r} = B_{r}(0), \ \ \ \Theta_{r} = \Theta_{r}(0)  \ \ \ \text{and} \quad E^{\max}_{r, l} = E^{\max}_{r, l}(0).$$

The geometric properties of the sets defined above will be crucial in our analysis. We collect them in the following Lemma.
\begin{lemma}[Fundamental Geometry]
\label{Fundamental Geometry}
Let $r > 0$ and $l>0$. Then, given $x \in \mathbb{R}^{n}$, we have the following relations:
\begin{enumerate}
\item $E^{}_{r,1}(x) \subset \Theta_{r\sqrt{n}} \subset E_{r\mathfrak{C}, 1}(x)$ and $E_{2^{-\mathfrak{C}}r}(x) \subset E^{}_{r,\frac{1}{4}}(x)$, for some natural number $\mathfrak{C}=\mathfrak{C}\left( n, b_{\max} \right) > 0$.
\item If $R$ is a $n$-dimensional rectangle, then $R^{}_{} (x) \subset \tilde{R}^{}_{}(x)$. Moreover, $R_{r,l}(x) \subset E_{(rl)c_{\max},1}(x)$, where $c_{\max} = n^{\frac{b_{\max}}{4}}$, if $r,l \in (0,1)$.
\item $E^{\max}_{\frac{r}{2}, 1}(x) \subset E^{\max}_{r,1/2}(x)$ and $E^{\max}_{r, l}(x) \subset E^{\max}_{rl,1}(x)$, if $l \geq 1$.
\item If $\tau_{1}$ is the topology generated by Euclidean balls $B_{r}(z)$ and $\tau_{2}$ is the topology generated by anisotropic balls $\Theta_{r}(z)$, then $\tau_{1} = \tau_{2}$.
\item If $T_{\beta, r}: \mathbb{R}^{n} \rightarrow \mathbb{R}^{n}$ is defined by
\begin{eqnarray}
T_{\beta, r}e_{i} = r^{\frac{2}{b_{i}}}e_{i} \quad  \text{or} \quad T^{}_{\max, r^{}}e_{i} = r^{\frac{b_{\max}}{b_{i}}}e_{i},
\end{eqnarray}
where $e_{i}$ is the i-th canonical vector, then $T_{\beta, r}(B_{l}) = E_{r, l}$ or $T^{}_{\max, r^{}}(B_{l}) = E^{\max}_{r, l}$.
\end{enumerate}
\end{lemma}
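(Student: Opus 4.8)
The plan is to verify each of the five relations directly from the definitions, since the Fundamental Geometry lemma is really a bookkeeping exercise about how the anisotropic scaling $T_{\beta,r}$ distorts Euclidean balls into the ellipses $E_{r,l}$, the rectangles $R_{r,l}$, and the anisotropic balls $\Theta_r$. The unifying observation is that $y \in E_{r,l}(x)$ exactly when $T_{\beta,r}^{-1}(y-x) \in B_l$, and $y \in \Theta_r(x)$ exactly when $\sum_i |y_i-x_i|^{b_i} < r^2$; so every inclusion reduces to comparing the quadratic form $\sum_i t_i^2$ with the sum of powers $\sum_i |t_i|^{b_i}$ on appropriate ranges of $t$. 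First I would dispose of item 5, which is immediate: $T_{\beta,r}e_i = r^{2/b_i}e_i$ scales the $i$-th coordinate by $r^{2/b_i}$, so a point $z$ with $\sum z_i^2 < l^2$ maps to a point $y = T_{\beta,r}z$ with $\sum y_i^2/r^{4/b_i} < l^2$, i.e. $T_{\beta,r}(B_l) = E_{r,l}$, and identically for $T_{\max,r}$ and $E^{\max}_{r,l}$.

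For item 1, the containment $E_{r,1}(x)\subset\Theta_{r\sqrt n}$ follows because on $E_{r,1}(x)$ each $|y_i-x_i|<r^{2/b_i}$, hence $\sum_i|y_i-x_i|^{b_i} < \sum_i r^2 = nr^2 = (r\sqrt n)^2$. The reverse-type inclusion $\Theta_{r\sqrt n}\subset E_{r\mathfrak C,1}(x)$ needs $\sum_i|y_i-x_i|^{b_i}<nr^2$ to force $|y_i-x_i| < (r\mathfrak C)^{2/b_i}$ for every $i$, which holds as soon as $\mathfrak C$ is chosen large enough relative to $n$ and the spread of the $b_i$'s, in particular $b_{\max}$ — one extracts $|y_i-x_i| < (nr^2)^{1/b_i}$ and absorbs the factor $n^{1/b_i}\le n^{1/b_{\min}}$ into a constant of the form $\mathfrak C = \mathfrak C(n,b_{\max})$ (possibly also $b_{\min}$, but $b_{\min}$ enters through $b_{\max}$ in their normalization). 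The last inclusion $E_{2^{-\mathfrak C}r}(x)\subset E_{r,1/4}(x)$ is again a matter of choosing $\mathfrak C$ so that $(2^{-\mathfrak C}r)^{4/b_i} \le \tfrac1{16}r^{4/b_i}$ coordinatewise, i.e. $2^{-4\mathfrak C/b_i}\le\tfrac1{16}$ for all $i$, which is ensured by $\mathfrak C$ large depending on $b_{\max}$. I would pick $\mathfrak C$ once and for all to satisfy all three requirements simultaneously.

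For item 2, $R(x)\subset\tilde R(x)$ is built into the definition: $\tilde R$ is the rectangle with half-sides $[2^{-\mathfrak C(b_{\min}/2)k}r]^{2/b_i} = 2^{-\mathfrak C k}r^{2/b_i}$ (using $(b_{\min}/2)\cdot(2/b_i)$... here one checks the exponent arithmetic), which dominates the hypothesized bound $2^{-\mathfrak C(k+1)}r^{2/b_i}$ on $R$ since $2^{-\mathfrak C(k+1)} \le 2^{-\mathfrak C(b_{\min}/2)k}$ for the relevant range — this is the one spot where the precise exponent $b_{\min}/2$ must be reconciled with the $\mathfrak C(k+1)$ in the hypothesis, and I would carry out that inequality carefully as it is the least transparent part. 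The inclusion $R_{r,l}(x)\subset E_{(rl)c_{\max},1}(x)$ with $c_{\max}=n^{b_{\max}/4}$ follows by squaring: on $R_{r,l}$ one has $|y_i-x_i|^2 < l^{4/b_{\min}}r^{4/b_i}$, and one wants $\sum_i |y_i-x_i|^2/((rl)c_{\max})^{4/b_i} < 1$; bounding the sum of $n$ terms each at most $l^{4/b_{\min}}r^{4/b_i}/((rlc_{\max})^{4/b_i})$ and using $r,l\in(0,1)$ to compare $l^{4/b_{\min}}$ with $l^{4/b_i}$ gives the claim once $c_{\max}^{4/b_i}\ge n$, i.e. $c_{\max}\ge n^{b_i/4}$, which holds since $c_{\max}=n^{b_{\max}/4}$.

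For item 3, both statements are monotonicity facts about $E^{\max}$. Since $E^{\max}_{r,l}(x) = \{z : \sum_i(z_i-x_i)^2/r^{2b_{\max}/b_i} < l^2\}$, halving $r$ multiplies each denominator by $2^{-2b_{\max}/b_i}\le 2^{-2}$ (because $b_{\max}/b_i\ge1$), so $E^{\max}_{r/2,1}(x)\subset E^{\max}_{r,1/2}(x)$; and $E^{\max}_{r,l}(x) = T_{\max,r}(B_l)$, so for $l\ge1$, $B_l\subset B_{l'}$-type scaling combined with $T_{\max,r}(B_l) = T_{\max,rl}\big(T_{\max,r}^{-1}T_{\max,rl}^{-1}\cdots\big)$ — more simply, $E^{\max}_{r,l} = T_{\max,r}(B_l)$ and $E^{\max}_{rl,1}=T_{\max,rl}(B_1)$, and one checks $T_{\max,r}(B_l)\subset T_{\max,rl}(B_1)$ by comparing $l\cdot r^{b_{\max}/b_i}$ with $(rl)^{b_{\max}/b_i}$, i.e. $l\le l^{b_{\max}/b_i}$, true for $l\ge1$ since $b_{\max}/b_i\ge1$. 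Item 4 is a soft topological statement: by item 1 every Euclidean ball contains an anisotropic ball around each of its points and vice versa (the inclusions $E_{2^{-\mathfrak C}r}(x)\subset E_{r,1/4}(x)\subset\Theta_{r\sqrt n/4}$-type chains, together with $\Theta_\rho(x)\subset B_{\rho^{2/b_{\min}}\sqrt n}(x)$ for small $\rho$ and a reverse inclusion), so the two families of sets generate the same open sets; I would phrase this as: each $\Theta$-ball is $\tau_1$-open and each Euclidean ball is $\tau_2$-open, hence $\tau_1=\tau_2$. The main obstacle throughout is purely the exponent bookkeeping in item 2 — reconciling the $2^{-\mathfrak C(k+1)}$ hypothesis with the $2^{-\mathfrak C(b_{\min}/2)k}$ in the definition of $\tilde R$ — and fixing a single $\mathfrak C=\mathfrak C(n,b_{\max})$ that makes all the inclusions in item 1 hold at once; everything else is a direct coordinatewise comparison of $\sum t_i^2$ against $\sum|t_i|^{b_i}$.
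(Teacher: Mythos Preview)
Your proposal is correct and follows essentially the same approach as the paper: the lemma is a collection of elementary containments, and both you and the paper verify each item by direct coordinatewise comparison of $\sum_i t_i^2$ against $\sum_i |t_i|^{b_i}$, using only that $b_{\min}\le b_i\le b_{\max}$ and the explicit form of the anisotropic scalings $T_{\beta,r}$, $T_{\max,r}$.

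One small correction in your item~2 arithmetic: the half-side of $\tilde R(x)$ is
\[
\bigl[2^{-\mathfrak C(b_{\min}/2)k}r\bigr]^{2/b_i}
= 2^{-\mathfrak C(b_{\min}/b_i)k}\,r^{2/b_i},
\]
not $2^{-\mathfrak C k}r^{2/b_i}$, so the comparison you actually need is $2^{-\mathfrak C(k+1)}\le 2^{-\mathfrak C(b_{\min}/b_i)k}$, i.e.\ $k+1\ge (b_{\min}/b_i)k$, which holds for every $k\ge 0$ because $b_{\min}/b_i\le 1$. Your written comparison $k+1\ge (b_{\min}/2)k$ can fail when $b_{\min}>2$, so be sure to keep the $b_i$ in the denominator. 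With that fix, your argument for $R(x)\subset\tilde R(x)$ goes through, and the rest of your proof is fine as written.
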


Next we will divide this section into two subsections: Smooth solutions and Viscosity solutions and extremal operators.

\subsection{Smooth solutions}\label{preliminaries ss}

Without loss of generality, we consider $L = (- \Delta)^{\beta, s}$. In this subsection, we establish the tools to get the regularity $C^{\gamma}$ for $\Delta^{\beta, s}$-harmonic smooth functions. Precisely, we show that the operator  $\Delta^{\beta, s}$ applied to radial functions $\eta$ is bounded for $ s \in \left( 0, 4/b_{\max} \right)$ and we get the Silvestre inequality for $\Delta^{\beta, s} \eta$.

\begin{lemma}[Barrier function]
\label{Barrier function}
Let $\eta: \mathbb{R}^{n}  \rightarrow \mathbb{R}^{}$ defined by
\begin{eqnarray}
\eta(y) = \left \{ 
\begin{array}{ll}
(1 - \vert y \vert^{2})^{2}, \ \text{ if } \ y \in B_{1}, \\
\\
0, \ \text{ if } \ y \in ( \mathbb{R}^{n} \setminus B_{1}).
\end{array}
\right.
\end{eqnarray}
There exist $C > 0$ only depending on $\beta$ , dimension $n$ and $s$ such that 
\begin{eqnarray}
\vert (- \Delta)^{\beta, s} \eta(x) \vert \leq C \quad \text{for all} \ x \in B_{3/4}.
\end{eqnarray}

\end{lemma}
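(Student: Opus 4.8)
The plan is to estimate the singular integral defining $(-\Delta)^{\beta,s}\eta(x)$ directly, splitting $\mathbb{R}^n$ into a bounded region near the singularity and its complement, and exploiting the $C^2$-smoothness of $\eta$ together with the symmetry of the kernel. Fix $x \in B_{3/4}$ and write
\begin{eqnarray*}
(-\Delta)^{\beta,s}\eta(x) = C_{\beta,s}\int_{\mathbb{R}^n}\big(\eta(x)-\eta(x+y)\big)\mathcal{K}(y)\,dy,
\end{eqnarray*}
where $\mathcal{K}(y)=\|y\|^{-(c+s)}$. Using that $\eta$ is even-symmetric in the sense that $\mathcal{K}(y)=\mathcal{K}(-y)$, I would rewrite the integral over a neighbourhood $\Theta_\rho$ of the origin in its second-difference form,
\begin{eqnarray*}
\frac{C_{\beta,s}}{2}\int_{\Theta_\rho}\big(2\eta(x)-\eta(x+y)-\eta(x-y)\big)\mathcal{K}(y)\,dy,
\end{eqnarray*}
so that the linear term cancels and the integrand is controlled by $\|D^2\eta\|_{L^\infty}|y|^2$. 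The remaining "far" piece $\int_{\mathbb{R}^n\setminus\Theta_\rho}$ is handled by $|\eta|\le 1$ (indeed $\eta$ has compact support in $B_1$, so this piece is over a bounded set once $\rho$ is fixed small relative to $\mathrm{dist}(x,\partial B_1)$).

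The key quantitative input is the integrability of $\mathcal{K}$ against $|y|^2$ near $0$ and against $1$ away from $0$, which is exactly where the restriction $0<s<4/b_{\max}$ enters. Using the comparison $E_{r,1}\subset \Theta_{r\sqrt n}\subset E_{r\mathfrak{C},1}$ from Lemma \ref{Fundamental Geometry}(1) and the anisotropic scaling $T_{\beta,r}(B_1)=E_{r,1}$ from Lemma \ref{Fundamental Geometry}(5), I would change variables $y = T_{\beta,r}z$ on dyadic anisotropic annuli $E_{2^{-k-1},1}\setminus E_{2^{-k},1}$. On each such annulus $\|y\|\approx 2^{-k}$ while $|y|^2 \lesssim \sum_i 2^{-2k\cdot 2/b_i} \le n\,2^{-4k/b_{\max}}$ (since $|y_i|\lesssim 2^{-2k/b_i}$ there and the dominant term is the one with $b_i=b_{\max}$), and the Jacobian of $T_{\beta,2^{-k}}$ is $2^{-ck}$ with $c=\sum 2/b_i$. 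Hence the contribution of the $k$-th annulus to the near-field integral is of order
\begin{eqnarray*}
2^{-4k/b_{\max}}\cdot 2^{(c+s)k}\cdot 2^{-ck} = 2^{(s - 4/b_{\max})k},
\end{eqnarray*}
which is summable over $k\to\infty$ precisely because $s - 4/b_{\max} < 0$. The far-field integral is a finite integral of $\mathcal{K}$ over a bounded set bounded away from $0$, hence finite. Collecting the two bounds gives $|(-\Delta)^{\beta,s}\eta(x)|\le C$ with $C=C(\beta,n,s)$, uniformly for $x\in B_{3/4}$ because $\mathrm{dist}(x,\partial B_1)\ge 1/4$ there, so $\rho$ can be chosen once and for all.

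The main obstacle I anticipate is bookkeeping the anisotropy cleanly: one must verify that on the anisotropic annuli the Euclidean quantity $|y|^2$ is genuinely controlled by $2^{-4k/b_{\max}}$ (the worst direction is $b_{\max}$, and it is this exponent — not $b_{\min}$ — that governs convergence), and that the second-difference cancellation is legitimate given that $\eta$ is only $C^{1,1}$ globally (it is $C^\infty$ in $B_1$ and identically $0$ outside, but only Lipschitz-continuous in its gradient across $\partial B_1$) — however, for $x\in B_{3/4}$ and $\rho<1/4$ the relevant second differences only see points inside $B_1$ where $\eta$ is smooth, so this is not a real difficulty, merely a point to state carefully. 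A secondary technical point is that the principal-value/second-difference representation requires the symmetry hypothesis $\mathcal{K}(y)=\mathcal{K}(-y)$, which holds for $\mathcal{K}(y)=\|y\|^{-(c+s)}$ since $\|y\|$ depends only on $|y_i|$; this lets the $\chi_{B_1}(y)\nabla\eta(x)\cdot y$ correction term be absorbed without separately estimating it.
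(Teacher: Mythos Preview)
Your approach is correct and essentially identical to the paper's: both split into a near-field piece controlled by the second-order Taylor bound $\lvert\eta(x)-\eta(x+y)+\langle\nabla\eta(x),y\rangle\rvert\le C\lvert y\rvert^{2}$ (your second-difference symmetrisation is the same thing), then decompose into dyadic anisotropic annuli $E_{r_k,1}\setminus E_{r_{k+1},1}$, change variables by $T_{\beta,r_k}$, and sum the geometric series with ratio $2^{-q_{\max,s}}$, $q_{\max,s}=4/b_{\max}-s>0$; and both bound the far piece by $\lVert\eta\rVert_\infty$ times $\int_{\mathbb{R}^n\setminus\Theta_\rho}\lVert y\rVert^{-(c+s)}dy<\infty$.

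One small slip to fix: your parenthetical that the far piece ``is over a bounded set'' thanks to the compact support of $\eta$ is not correct. For $x\in B_{3/4}$ one has $\eta(x)>0$, so the integrand $\eta(x)-\eta(x+y)$ equals $\eta(x)\neq 0$ whenever $x+y\notin B_1$, and the far-field integral is genuinely over an unbounded region. What saves you is exactly the fact you state a line later, namely the integrability of $\mathcal{K}$ against $1$ away from the origin; the paper makes this explicit, obtaining $\int_{\mathbb{R}^n\setminus\Theta_{r_1}}\lVert y\rVert^{-(c+s)}dy = C(n,\beta)/s$. With that correction the proof is complete.
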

\begin{proof}
Choose $r_{0}=r_{0}(n, \beta) \in (0,1)$ such that 
\begin{eqnarray}
\vert \eta(x) - \eta (x+y) + \langle \nabla \eta (x), y \rangle\vert \leq C \vert y \vert^{2} \quad \text{for all} \ (x, y) \in B_{3/4} \times E_{r_{0}, 1},
\end{eqnarray}
where $C$ is a positive constant only depending on $\beta$, and dimension $n$. Denote $T_{r_{k}}:= T_{\beta, r_{k}}$, where $r_{k} = r_{0}2^{-k}$. Then, we get
\begin{eqnarray}
\int_{E_{r_{0}, 1}} \dfrac{ \vert \eta(x) - \eta (x + y) +  \langle \nabla \eta (x), y \rangle \vert}{ \Vert y \Vert^{c+s}}dy & \leq & C \nonumber \int_{E_{r_{0}, 1}} \dfrac{ \vert y \vert^{2} }{\Vert y \Vert^{c+s}}dy \\
\nonumber 
& = C & \sum_{k=0}^{\infty} \int_{E_{r_{k}, 1} \setminus E_{r_{k+1}, 1}} \dfrac{ \vert y \vert^{2} }{\Vert y \Vert^{c+s}}dy \\ \nonumber
& = C & \sum_{k=0}^{\infty} r_{k}^{-s} \int_{B_{1} \setminus E_{1/2, 1}} \dfrac{ \vert T_{r_{k}}y \vert^{2} }{\Vert y \Vert^{c+s}}dy \\ \nonumber
\end{eqnarray}
and we can estimate
\begin{eqnarray}
\sum_{k=0}^{\infty} r_{k}^{-s} \int_{B_{1} \setminus E_{1/2, 1}} \dfrac{ \vert T_{r_{k}}y \vert^{2} }{\Vert y \Vert^{c+s}}dy & \leq& \nonumber \sum_{i=0}^{\infty} r_{k}^{q_{\max, s}}\int_{B_{1} \setminus E_{1/2, 1}} \dfrac{ \vert y \vert^{2} }{\Vert y \Vert^{c+s}}dy \\ \nonumber
& = & \dfrac{C(n, \beta, s)}{1 - 2^{-q_{\max, s}}} ,
\end{eqnarray}
where $C(n, \beta, s) = \displaystyle\int_{B_{1} \setminus E_{1/2, 1}} \dfrac{ \vert y \vert^{2} }{\Vert y \Vert^{c+s}}dy$. On the other hand, if $r_{1} = r_{1}(r_{0}) > 0$ is such that $\Theta_{r_{1}} \subset E_{r_{0}, 1}$, we obtain
\begin{eqnarray}
\int_{\mathbb{R}^{n} \setminus E_{r_{0}, 1}} \dfrac{ \vert \eta(x) - \eta(x + y) \vert}{ \Vert y \Vert^{c+s}}dy  & \leq&   2\Vert \eta \Vert_{\infty} \int_{\mathbb{R}^{n} \setminus \Theta_{r_{1}}} \dfrac{1}{ \Vert y \Vert^{c+s}}dy \\ \nonumber
& = & r_{1}^{c} \int_{\mathbb{R}^{n} \setminus \Theta_{1}} \dfrac{1}{ \Vert y \Vert^{c+s}}dy \\ \nonumber
& = & \dfrac{C(n, \beta)}{s}. 
\end{eqnarray}
Then, we find
\begin{eqnarray}
\label{EUCLIDEAN BARRIER}
\vert(- \Delta)^{\beta, s}\eta(x) \vert & \leq & C.
\end{eqnarray}
\end{proof}

Taking into account \eqref{EUCLIDEAN BARRIER} we get the Silvestre inequality for $\Delta^{\beta, s}$:

\begin{lemma}[Silvestre inequality] Given a $\delta> 0$, there exist $0 < \kappa < \frac{1}{4}$ and $\tau >0 $ only depending on $\beta$, dimension $n$, $s$ and $\delta$ such that
\begin{eqnarray}
\label{Fundamental estimate}
\kappa (- \Delta)^{\beta, s}\eta(x) + 2\int_{\mathbb{R}^{n} \setminus B_{\frac{1}{4}}}  (\vert 8 y \vert^{\tau} - 1 )\mathcal{K}_{0}(y)dy < \dfrac{1}{2} \inf_{\mathcal{B} \subset B_{2}, \ \vert \mathcal{B}  \vert < \delta}\int_{\mathcal{B}}\mathcal{K}_{0}(y)dy, 
\end{eqnarray}
for all $x \in B_{3/4}$, where $\mathcal{K}_{0}(y) := \frac{1}{\Vert y \Vert^{c+s}}$ for all $y \in \mathbb{R}^{n} \setminus \left\lbrace 0 \right\rbrace$.
\end{lemma}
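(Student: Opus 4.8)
The plan is to choose $\tau$ small and $\kappa$ small in that order, using the boundedness of $(-\Delta)^{\beta,s}\eta$ from the previous lemma together with quantitative control of the two integral terms as $\tau\to 0$. First I would observe that the right-hand side is strictly positive and independent of $\tau$ and $\kappa$: indeed $\mathcal{K}_0(y)=\Vert y\Vert^{-(c+s)}$ is locally integrable near the origin (this is exactly the content of the barrier estimate, which shows $\int_{E_{r_0,1}}\Vert y\Vert^{-(c+s)}|y|^2\,dy<\infty$, hence $\mathcal{K}_0\in L^1_{\mathrm{loc}}$ away from the weight $|y|^2$ is not needed — one uses instead that for $|\mathcal B|$ fixed the infimum of $\int_{\mathcal B}\mathcal{K}_0$ over $\mathcal B\subset B_2$ with $|\mathcal B|<\delta$ is attained on a set concentrated where $\mathcal{K}_0$ is smallest, i.e. near $\partial B_2$, and there $\mathcal{K}_0$ is bounded below). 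So set
\[
m_\delta := \inf_{\mathcal B\subset B_2,\ |\mathcal B|<\delta}\int_{\mathcal B}\mathcal{K}_0(y)\,dy > 0,
\]
which depends only on $n,\beta,s,\delta$; the target inequality becomes $\kappa(-\Delta)^{\beta,s}\eta(x) + 2I(\tau) < \tfrac12 m_\delta$, where $I(\tau):=\int_{\mathbb{R}^n\setminus B_{1/4}}(|8y|^\tau-1)\mathcal{K}_0(y)\,dy$.

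Next I would show $I(\tau)\to 0$ as $\tau\to 0^+$. Split the integral over $B_{1/4}^c$ into $\{1/4<|y|<\rho\}$ and $\{|y|\ge\rho\}$ for a large fixed $\rho$. On the first (bounded) region, $0\le |8y|^\tau-1 = e^{\tau\log|8y|}-1 \le C\tau$ uniformly, while $\int_{1/4<|y|<\rho}\mathcal{K}_0 <\infty$, so that piece is $O(\tau)$. On the tail $\{|y|\ge\rho\}$ one needs $\tau$ small enough that $|8y|^\tau\mathcal{K}_0(y)$ is still integrable at infinity: since $\mathcal{K}_0(y)\le C|y|^{-(c+s)}$ (comparing $\Vert\cdot\Vert$ with $|\cdot|$ as in Lemma \ref{Fundamental Geometry}) and $c+s>c\ge$ the relevant power, choosing $\tau < s$ (say) makes $\int_{|y|\ge\rho}|8y|^\tau\mathcal{K}_0(y)\,dy$ finite and, by dominated convergence with dominating function $|8y|^{s}\mathcal{K}_0(y)\mathbf 1_{\{|y|\ge\rho\}}$ for all $\tau\le s/2$, this tail tends to $0$ as $\tau\to0$. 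Combining, fix $\tau=\tau(n,\beta,s,\delta)>0$ small enough that $2I(\tau) < \tfrac14 m_\delta$.

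Finally, with $\tau$ now fixed, use the uniform bound $|(-\Delta)^{\beta,s}\eta(x)|\le C$ on $B_{3/4}$ from the Barrier Lemma and choose
\[
0 < \kappa < \min\Bigl\{\tfrac14,\ \tfrac{m_\delta}{4C}\Bigr\},
\]
so that $\kappa(-\Delta)^{\beta,s}\eta(x) \le \kappa C < \tfrac14 m_\delta$ for every $x\in B_{3/4}$; adding the two estimates gives the claimed strict inequality. The main obstacle I anticipate is the careful verification that $m_\delta>0$ — i.e. that $\mathcal{K}_0$ does not blow up in a way that destroys the lower bound — and the choice of the dominating function controlling $I(\tau)$ at infinity for all small $\tau$ simultaneously; both are handled by comparing the anisotropic norm $\Vert\cdot\Vert$ with the Euclidean norm (equivalence of the induced topologies, Lemma \ref{Fundamental Geometry}(4)) so that all integrals reduce to standard radial estimates. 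Everything else is the elementary two-parameter chasing described above, and the order ($\tau$ first, then $\kappa$) is essential since $I$ does not depend on $\kappa$ while the $\eta$-term does not depend on $\tau$.
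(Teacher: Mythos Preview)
The paper does not actually prove this lemma; it merely states that ``taking into account \eqref{EUCLIDEAN BARRIER} we get the Silvestre inequality.'' Your two-step argument (first send $\tau\to 0^+$ so the middle integral is small, then choose $\kappa$ small using the uniform bound on $(-\Delta)^{\beta,s}\eta$ from Lemma~\ref{Barrier function}) is exactly the standard route and is what the authors have in mind.

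Two points deserve comment. First, the stated constraint $|\mathcal{B}|<\delta$ is a typo: with that condition the infimum is zero (take $|\mathcal{B}|\to 0$), and the inequality is then false at $x=0$ since $(-\Delta)^{\beta,s}\eta(0)>0$ and $I(\tau)>0$. The Growth Lemma proof (Lemma~\ref{anisotropic-conditions}) uses $|\mathcal{B}|>\delta$, which is the intended hypothesis; under that constraint your $m_\delta>0$ follows simply from $\mathcal{K}_0\ge c_0>0$ on $B_2$ (the only singularity is at $0$, where $\mathcal{K}_0=+\infty$), giving $\int_{\mathcal{B}}\mathcal{K}_0\ge c_0\delta$. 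Your justification (``for $|\mathcal{B}|$ fixed\ldots'') is muddled precisely because you are fighting the typo.

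Second, the tail estimate needs a small correction in the anisotropic setting. The comparison $\mathcal{K}_0(y)\le C|y|^{-(c+s)}$ is not generally valid, and the threshold $\tau<s$ is not the right one. Instead, decompose $\mathbb{R}^n\setminus B_{1/4}$ into anisotropic annuli $\Theta_{2^{k+1}}\setminus\Theta_{2^k}$ as in the proof of Lemma~\ref{Barrier function}: on each such annulus $\|y\|\sim 2^k$ while $|y|\le C\,2^{2k/b_{\min}}$, so that
\[
\int_{\Theta_{2^{k+1}}\setminus\Theta_{2^k}} |8y|^\tau\,\mathcal{K}_0(y)\,dy \le C\,2^{k(2\tau/b_{\min}-s)},
\]
which is summable in $k$ provided $\tau < s\,b_{\min}/2$. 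With this threshold your dominated convergence argument goes through unchanged, and the rest of the proof is correct.
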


\subsection{Viscosity solutions and extremal operators}\label{preliminaries vs}

In this subsection we collect the technical properties of the operator $\mathcal{I}$ that we will use throughout the paper. Since $\mathcal{K}_{\alpha\beta}$ is symmetric and positive, we obtain
$$
L_{\alpha \beta}u\left( x \right) =  PV \int_{\mathbb{R}^{n}}\left( u\left( x + y \right) - u\left(x \right) \right) \mathcal{K}_{\alpha\beta}\left( y\right)dy
$$
and
$$
L_{\alpha \beta}u\left( x \right) =  \dfrac{1}{2} \int_{\mathbb{R}^{n}}\left( u\left( x + y \right) - u\left(x - y \right) - 2\left( x \right) \right) \mathcal{K}_{\alpha\beta}\left( y\right)dy.
$$
For convenience of notation, we denote
$$
\delta \left( u, x, y \right):= u \left( x + y \right) + u \left( x - y \right) - 2u \left( x \right)
$$
and we can write
$$
L_{\alpha \beta} u\left( x \right) = \int_{\mathbb{R}^{n}}\delta \left( u, x, y \right)\mathcal{K}_{\alpha\beta}\left( y\right)dy,
$$
for some kernel $\mathcal{K}_{\alpha\beta}$.

We now define the adequate class of test functions for our operators.
 
\begin{definition}
\label{C^{1,1} definition}
A function $\varphi$ is said to be $C^{1,1}$ at the point $x$, and we write $\varphi\in C^{1,1}\left( x \right)$, if there is a vector $v \in \mathbb{R}^{n}$ and numbers $M, \eta_{0} >0$ such that 
$$
\vert \varphi \left( x + y\right) - \varphi \left( x \right) -  v \cdot y   \vert \leq M \vert y \vert^{2},
$$
for $\vert  x \vert < \eta_{0}$. We say that a function $\varphi$ is $C^{1,1}$ in a set $\Omega$, and we denote $\varphi \in C^{1,1}\left( \Omega \right)$, if the previous holds at every
point, with a uniform constant $M$.
\end{definition}

\begin{remark}
Let $u \in C^{1,1}\left( x \right) \cap L^{\infty}\left( \mathbb{R}^{n} \right)$ and $M>0$ and $\eta_{0}>0$ be as in definition \ref{C^{1,1} definition}. Then, by Lemma \ref{Barrier function}, we find
$$L_{\alpha \beta}u\left( x \right)  =  PV \int_{\mathbb{R}^{n}}\delta \left( u, x, y \right) \mathcal{K}_{\alpha\beta}\left( y\right)dy \leq C(n, \Lambda, b_{\min}, b_{\max}, \eta_{0}, s).$$
\end{remark}

We now introduce the notion of viscosity subsolution (and supersolution) $u$ in a domain $\Omega$, with $C^{2}$ test functions that touch $u$ from above or from below. We stress that $u$ is allowed to have arbitrary discontinuities outside of $\Omega$.

\begin{definition}
Let $f$ be a bounded and continuous function in $\mathbb{R}^{n}$. A function $u:\mathbb{R}^{n} \rightarrow \mathbb{R}$, upper (lower) semicontinuous in $\overline{\Omega}$, is said to be a subsolution (supersolution) to equation $\mathcal{I}u = f$, and we write $\mathcal{I}u \geq f$ ($\mathcal{I}u \leq f$), if whenever the following happen:  
\begin{enumerate}
\item $x_{0} \in \Omega$ is any point in $\Omega$;
\medskip
\item $B_{r}\left( x_{0} \right) \subset \Omega$, for some $r>0$;
\medskip
\item $\varphi \in C^{2}\left(\overline{B_{r}\left( x_{0} \right)} \right)$;
\medskip
\item $\varphi \left( x_{0} \right) = u\left( x_{0} \right)$;
\medskip
\item $\varphi \left( y \right) > u\left( y \right)$ ($\varphi \left( y \right) < u\left( y \right)$) for every $y \in B_{r}\left( x_{0} \right) \setminus \left\lbrace x_{0} \right\rbrace$; 
\end{enumerate}
then, if we let 
$$
v := \left \{ 
\begin{array}{lll}
\varphi , & \text{ in } & B_{r}\left( x_{0} \right) \\
u & \text{ in } & \mathbb{R}^{n} \setminus  B_{r}\left( x_{0} \right), 
\end{array}
\right.
$$
we have $\mathcal{I}v\left( x_{0} \right) \geq f\left( x_{0} \right)$ ($\mathcal{I}v\left( x_{0} \right) \leq f\left( x_{0} \right)$).
\end{definition}

\begin{remark}
 Functions which are $C^{1,1}$ at a contact point $x$ can be used as test functions in the definition of viscosity solution (see Lemma 4.3 in \cite{CS}).
\end{remark}
Next, we define the class of linear integro-differential operators that will be a fundamental tool for the regularity analysis.

\begin{definition}
Let $\mathfrak{L}_{0}$ be the collection of linear operators $L_{\alpha \beta}$. We define the maximal and minimal operator with respect to $\mathfrak{L}_{0}$ as
$$
\mathcal{M}^{+} u \left( x \right) := \sup\limits_{L \in \mathfrak{L}_{0}} Lu\left( x \right) 
$$
and
$$
\mathcal{M}^{-} u \left( x \right) := \inf \limits_{L \in \mathfrak{L}_{0}} Lu\left( x \right).
$$
\end{definition}
By definition, if $\mathcal{M}^{+} u \left( x \right) < \infty$ and $\mathcal{M}^{-} u \left( x \right) < \infty$, we get 
$$
\mathcal{M}^{+} u \left( x \right) = q_{\max, s} \int_{\mathbb{R}^{n}} \dfrac{\Lambda \delta^{+} - \lambda \delta^{-}}{\Vert y \Vert^{c+s}} dy
$$
and
$$
\mathcal{M}^{-} u \left( x \right) = q_{\max, s} \int_{\mathbb{R}^{n}} \dfrac{\lambda \delta^{+} - \Lambda \delta^{-}}{\Vert y \Vert^{c+s}} dy.
$$

The proofs of the results that we now present can be found in the sections $3$, $4$ and $5$ of \cite{CS}. The first result ensures that if $u$ can be touched from above, at a point $x$, with a paraboloid then $Iu\left( x\right)$ can be evaluated classically.

\begin{lemma}
\label{clas sense max}
If we have a subsolution, $\mathcal{I}u \geq f$ in $\Omega$, and $\varphi$ is a $C^{2}$ function that touches $u$ from above at a point $x \in \Omega$, then $\mathcal{I}u\left( x \right)$ is defined in the classical sense and $\mathcal{I}u\left( x\right) \geq f\left( x \right)$.
\end{lemma}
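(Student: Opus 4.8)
The plan is to adapt the standard argument (Caffarelli--Silvestre, \cite{CS}, Lemma 4.1) that lets one evaluate a nonlocal operator classically at a point where the subsolution is touched from above by a smooth function. The key observation is that the restriction $0<s<4/b_{\max}$ makes the kernels $\mathcal{K}_{\alpha\beta}$ satisfy the anisotropic bounds \eqref{Kernel cond 2} with $q_{\max,s}>0$, so that the barrier estimate of Lemma \ref{Barrier function} and the $C^{1,1}$ integrability remark are available; these are exactly what guarantees the integrals below converge absolutely.

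First I would fix $x\in\Omega$ and the $C^{2}$ function $\varphi$ touching $u$ from above at $x$, with $B_{r}(x_{0})\subset\Omega$. For $\rho<r$ small define the replacement
\begin{eqnarray}
v_{\rho} := \left\{
\begin{array}{lll}
\varphi, & \text{ in } & B_{\rho}(x),\\
u, & \text{ in } & \mathbb{R}^{n}\setminus B_{\rho}(x).
\end{array}
\right.
\end{eqnarray}
By the definition of viscosity subsolution we have $\mathcal{I}v_{\rho}(x)\ge f(x)$, and since $v_{\rho}\in C^{1,1}(x)\cap L^{\infty}(\mathbb{R}^{n})$ (it agrees with $\varphi$ near $x$ and with the bounded $u$ away from it), the Remark after Definition \ref{C^{1,1} definition} shows each $L_{\alpha\beta}v_{\rho}(x)$, hence $\mathcal{I}v_{\rho}(x)=\inf_{\alpha}\sup_{\beta}L_{\alpha\beta}v_{\rho}(x)$, is finite and defined in the classical (absolutely convergent) sense. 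The next step is to compare $v_{\rho}$ with $\varphi$ itself: since $\varphi\ge u$ in $B_{r}(x)$ with equality at $x$, we have $\varphi\ge v_{\rho}$ everywhere with $\varphi(x)=v_{\rho}(x)=u(x)$, so $\delta(\varphi,x,y)\ge\delta(v_{\rho},x,y)$ pointwise; using the representation $L_{\alpha\beta}w(x)=\int_{\mathbb{R}^{n}}\delta(w,x,y)\mathcal{K}_{\alpha\beta}(y)\,dy$ and positivity of $\mathcal{K}_{\alpha\beta}$ gives $L_{\alpha\beta}\varphi(x)\ge L_{\alpha\beta}v_{\rho}(x)$ for every $\alpha,\beta$, whence $\mathcal{I}\varphi(x)\ge\mathcal{I}v_{\rho}(x)\ge f(x)$. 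Because $\varphi$ is $C^{2}$ near $x$ and $u=v_{\rho}$ is bounded, $\mathcal{I}\varphi(x)$ is itself classically defined, so this already gives one half of the statement if one only wanted $\mathcal{I}\varphi(x)\ge f(x)$; the point is to transfer this to $u$.

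To get that $\mathcal{I}u(x)$ is itself classically defined I would let $\rho\downarrow 0$ and use dominated convergence. Write $\delta(u,x,y)=\delta(v_{\rho},x,y)$ for $|y|\ge\rho$, and on $B_{\rho}(x)$ bound $|\delta(u,x,y)|\le |\delta(\varphi,x,y)|\le C|y|^{2}$ (from $\varphi\in C^{2}$ and $u\le\varphi$ with contact at $x$, together with the lower bound $u(x\pm y)\ge$ something controlled — here one uses that $u$ is upper semicontinuous and bounded so $\delta(u,x,y)$ is bounded below, and bounded above by the paraboloid). Thus $\delta(u,x,y)\mathcal{K}_{\alpha\beta}(y)$ is dominated, uniformly in $\alpha,\beta$, by an $L^{1}(\mathbb{R}^{n})$ function: near the origin by $C|y|^{2}\Lambda q_{\max,s}\|y\|^{-(c+s)}$, whose integrability is precisely the content of the computation in Lemma \ref{Barrier function} (this is where $s<4/b_{\max}$, i.e. $q_{\max,s}>0$, enters), and away from the origin by a bounded multiple of $\|y\|^{-(c+s)}$. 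Hence $L_{\alpha\beta}u(x)$ is absolutely convergent, and $L_{\alpha\beta}v_{\rho}(x)\to L_{\alpha\beta}u(x)$ as $\rho\to 0$, uniformly in $\alpha,\beta$ by the uniform domination; the uniform convergence passes through the $\inf_{\alpha}\sup_{\beta}$, so $\mathcal{I}v_{\rho}(x)\to\mathcal{I}u(x)$. Combining with $\mathcal{I}v_{\rho}(x)\ge f(x)$ yields $\mathcal{I}u(x)\ge f(x)$ with $\mathcal{I}u(x)$ classically defined.

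The main obstacle is the uniform-in-$(\alpha,\beta)$ domination near the origin: one must check that the $L^{1}$ bound on $\delta(u,x,y)\mathcal{K}_{\alpha\beta}(y)$ coming from the quadratic estimate does not degenerate as the kernel ranges over the family, and that passing to the limit commutes with $\inf\sup$. Both are handled by the fact that the bounds \eqref{Kernel cond 2} are uniform in $\alpha,\beta$, so the dominating function is a single $L^{1}$ function, and uniform convergence of a family of functions is preserved under pointwise $\inf$ and $\sup$. A minor technical point, already standard in \cite{CS}, is to justify that $\delta(u,x,y)$ is bounded below near $y=0$ using upper semicontinuity and global boundedness of $u$; this is routine and I would state it without belaboring the estimate.
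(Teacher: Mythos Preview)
Your approach is the right one and is exactly what the paper invokes (it simply cites \cite{CS} for this lemma), but there is a real gap in your dominated–convergence step. You claim that near $y=0$ one has $|\delta(u,x,y)|\le |\delta(\varphi,x,y)|\le C|y|^{2}$; however, touching from above only gives the one–sided inequality $\delta(u,x,y)\le \delta(\varphi,x,y)$, not a bound on the absolute value. Since $u$ is merely upper semicontinuous and bounded, all you can say from below is $\delta(u,x,y)\ge -4\|u\|_{L^{\infty}}$, a constant, and a constant multiple of $\|y\|^{-(c+s)}$ is \emph{not} integrable near the origin. So your ``single $L^{1}$ dominating function'' does not exist as stated, and the appeal to ``bounded below \ldots routine'' in your last paragraph does not close the argument.

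The standard repair (this is how the \cite{CS} proof actually runs) is to observe first that $\delta(u,x,y)^{+}\le \delta(\varphi,x,y)^{+}\le C|y|^{2}$, so by the computation of Lemma~\ref{Barrier function} each $L_{\alpha\beta}u(x)$ is well defined in $[-\infty,\infty)$. If it equalled $-\infty$, then by the uniform lower kernel bound in \eqref{Kernel cond 2} one would have $\int \delta^{-}(u,x,y)\|y\|^{-(c+s)}dy=\infty$, hence $\mathcal{M}^{+}v_{\rho}(x)\to -\infty$ by monotone convergence (since $\delta^{-}(v_{\rho},x,y)\uparrow \delta^{-}(u,x,y)$ while $\delta^{+}(v_{\rho},x,y)$ stays uniformly bounded); but $\mathcal{M}^{+}v_{\rho}(x)\ge \mathcal{I}v_{\rho}(x)\ge f(x)$, a contradiction. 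Therefore $L_{\alpha\beta}u(x)$ is finite, whence $0\le L_{\alpha\beta}v_{\rho}(x)-L_{\alpha\beta}u(x)\le \Lambda q_{\max,s}\int_{B_{\rho}}\bigl[\delta(\varphi,x,y)-\delta(u,x,y)\bigr]\|y\|^{-(c+s)}dy$, and now the integrand is nonnegative and integrable on all of $\mathbb{R}^{n}$ (difference of two integrable functions), so the right side tends to $0$ as $\rho\to 0$, uniformly in $\alpha,\beta$. From here your passage through $\inf_{\alpha}\sup_{\beta}$ is correct.
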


Another important property of $\mathcal{I}$ is the continuity of $\mathcal{I}\varphi$ in $\Omega$ if $\varphi\in C^{1, 1}\left( \Omega \right)$.

\begin{lemma}
\label{I is C^{1,1}}
Let $v$ be a
bounded function in $\mathbb{R}^{n}$ and $C^{1,1}$ in some open set $\Omega$. Then $\mathcal{I}v$ is continuous in $\Omega$.
\end{lemma}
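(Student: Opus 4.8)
\textbf{Proof proposal for Lemma \ref{I is C^{1,1}}.}

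The plan is to exploit the very definition of $C^{1,1}\left( \Omega \right)$ to split the integrand of each $L_{\alpha\beta}v$ into a near-origin piece, where the quadratic bound on $\delta(v,x,y)$ controls the kernel singularity, and a far-from-origin piece, where boundedness of $v$ and integrability of the kernel at infinity take over. Concretely, fix $x_0 \in \Omega$ and let $M, \eta_0$ be the uniform constants from Definition \ref{C^{1,1} definition}; since $\Omega$ is open, I may shrink $\eta_0$ so that $\overline{\Theta_{\rho}(x_0)} \subset \Omega$ for some small $\rho$, using Lemma \ref{Fundamental Geometry}(4) to pass freely between Euclidean and anisotropic balls. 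For every $x$ in a neighbourhood of $x_0$ and every $\alpha,\beta$, write
\begin{equation}
L_{\alpha\beta}v(x) = \int_{\Theta_{\rho}} \delta(v,x,y)\,\mathcal{K}_{\alpha\beta}(y)\,dy + \int_{\mathbb{R}^n \setminus \Theta_{\rho}} \delta(v,x,y)\,\mathcal{K}_{\alpha\beta}(y)\,dy =: A_{\alpha\beta}(x) + B_{\alpha\beta}(x).
\end{equation}
For the inner term, the $C^{1,1}$ condition gives $|\delta(v,x,y)| \le 2M|y|^2$, and together with the upper bound in \eqref{Kernel cond 2} this is dominated by $C\,q_{\max,s}|y|^2\Vert y\Vert^{-c-s}$; the computation already carried out in the proof of Lemma \ref{Barrier function} (decomposing $\Theta_\rho$ into anisotropic annuli $E_{r_k,1}\setminus E_{r_{k+1},1}$ and rescaling by $T_{\beta,r_k}$, which converges precisely because $s < 4/b_{\max}$, i.e.\ $q_{\max,s}>0$) shows this integral converges, uniformly in $x$ and in $\alpha,\beta$. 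For the outer term, $|\delta(v,x,y)| \le 4\Vert v\Vert_{L^\infty(\mathbb{R}^n)}$ and the lower bound $\Vert y \Vert \gtrsim \rho^{\,?}$ on $\mathbb{R}^n\setminus\Theta_\rho$ makes $\mathcal{K}_{\alpha\beta}(y) \le \Lambda q_{\max,s}\Vert y\Vert^{-c-s}$ integrable there (again as in Lemma \ref{Barrier function}, via $\int_{\mathbb{R}^n\setminus\Theta_1}\Vert y\Vert^{-c-s}\,dy = C/s$ after scaling). Hence $\mathcal{I}v(x) = \inf_\alpha\sup_\beta (A_{\alpha\beta}(x)+B_{\alpha\beta}(x))$ is finite and, being an inf-sup of a family with a common integrable majorant, is well-defined pointwise in a neighbourhood of $x_0$.

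For continuity, I would show each family $\{A_{\alpha\beta}\}$ and $\{B_{\alpha\beta}\}$ is equicontinuous at $x_0$; then $\mathcal{I}v$, as an inf-sup of uniformly equicontinuous functions, is itself continuous at $x_0$ (the inf-sup of functions all within $\varepsilon$ of their values at $x_0$ stays within $\varepsilon$). For $B_{\alpha\beta}$, if $x_k \to x_0$ then $\delta(v,x_k,y) \to \delta(v,x_0,y)$ for a.e.\ $y$ (at every point of continuity of $v$, hence a.e.\ since $v$ is bounded and measurable — here one uses that $v$ is continuous in $\Omega$ and merely bounded outside, so the translated arguments $x_k\pm y$ land in the continuity set for a.e.\ $y$), and dominated convergence with majorant $4\Vert v\Vert_\infty\Lambda q_{\max,s}\Vert y\Vert^{-c-s}\mathbf{1}_{\mathbb{R}^n\setminus\Theta_\rho}$ gives $B_{\alpha\beta}(x_k)\to B_{\alpha\beta}(x_0)$, with a rate independent of $\alpha,\beta$ because the majorant is. For $A_{\alpha\beta}$, the $C^{1,1}$ bound holds with the \emph{same} $M$ and $\eta_0$ at \emph{every} point of $\Omega$, so for $x$ near $x_0$ and $y\in\Theta_\rho$ one has $|\delta(v,x,y)|\le 2M|y|^2$ uniformly; combined with the integrability of $|y|^2\Vert y\Vert^{-c-s}$ on $\Theta_\rho$ this lets me split $A_{\alpha\beta}$ into an integral over $\Theta_\varepsilon$ (uniformly small, $\le C M \omega(\varepsilon)$ where $\omega(\varepsilon)\to0$) plus an integral over the annulus $\Theta_\rho\setminus\Theta_\varepsilon$, on which $\mathcal{K}_{\alpha\beta}$ is bounded above and below by fixed constants and $\delta(v,\cdot,y)$ is continuous, so dominated convergence applies. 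Again all estimates are uniform in $\alpha,\beta$.

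The main obstacle is the a.e.\ pointwise convergence $\delta(v,x_k,y)\to\delta(v,x_0,y)$ in the outer region, since $v$ is only assumed bounded outside $\Omega$ and may have arbitrary discontinuities there: one must argue that for a.e.\ fixed $y$ the points $x_k + y$ eventually avoid the (measure-zero, once one notes $v$ is continuous \emph{inside} $\Omega$ and the relevant obstruction is the boundary behaviour) bad set — more robustly, one invokes that $x\mapsto \int_{\mathbb{R}^n\setminus\Theta_\rho} v(x+y)\mathcal{K}_{\alpha\beta}(y)\,dy$ is a convolution of $L^\infty$ with an $L^1$ kernel, hence continuous, which sidesteps pointwise issues entirely. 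A secondary technical point is keeping every estimate uniform over the index set $\{(\alpha,\beta)\}$ so that the inf-sup inherits continuity; this is automatic here because the Pucci-type bounds \eqref{Kernel cond 2} give majorants with no dependence on $\alpha,\beta$. I expect the convolution viewpoint for $B_{\alpha\beta}$ plus the rescaling argument of Lemma \ref{Barrier function} for $A_{\alpha\beta}$ to dispatch the proof cleanly; the result is the anisotropic analogue of Lemma 4.2 (or the relevant lemma in Section 4) of \cite{CS}, and no new idea beyond the anisotropic geometry of Lemma \ref{Fundamental Geometry} is needed.
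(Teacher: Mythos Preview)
The paper does not supply its own proof of this lemma: immediately before stating it, the authors write that ``the proofs of the results that we now present can be found in the sections 3, 4 and 5 of \cite{CS}.'' Your proposal is precisely the standard Caffarelli--Silvestre argument (their Lemma~4.2) transported to the anisotropic setting via Lemma~\ref{Fundamental Geometry} and the integrability computations of Lemma~\ref{Barrier function}; this is exactly what the citation is pointing to, and your sketch is correct. The convolution observation for the outer term $B_{\alpha\beta}$ (that $L^{\infty}\ast L^{1}$ is continuous) is the clean way to handle possible discontinuities of $v$ outside $\Omega$ and is preferable to the a.e.\ pointwise-convergence route you first considered, since a bounded measurable function need not be a.e.\ continuous.
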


The next lemma allows us to conclude that the difference between a subsolution of the maximal operator $\mathcal{M}^{+}$ and a supersolution of the minimal operator $\mathcal{M}^{-}$ is a subsolution of the maximal operator.

\begin{lemma}
\label{Comp. princ.}
Let $\Omega$ be a bounded open set and $u$ and $v$ be two bounded functions in $\mathbb{R}^{n}$ such that

\begin{enumerate}

\item $u$ is upper-semicontinuous and $v$ is lower-semicontinuous in $\overline{\Omega}$;
\medskip
\item $\mathcal{I}u \geq f$ and $\mathcal{I}v \leq g$ in the viscosity sense in $\Omega$ for two continuous functions $f$ and $g$.

\end{enumerate}
Then 
$$\mathcal{M}^{+}\left( u - v \right) \geq f - g \quad \mathrm{in} \ \ \Omega$$
in the viscosity sense.

\end{lemma}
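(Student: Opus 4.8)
The plan is to follow the now-classical viscosity-solution argument from \cite{CS} (Lemma 5.8 there), adapting it to our anisotropic operators, but in fact almost nothing needs to change since the structure is local in the test-function mechanism. Let $\psi$ be a $C^{2}$ function touching $u-v$ from below at $x_{0}\in\Omega$, strictly from below in a small ball $B_{r}(x_{0})\subset\Omega$; we must produce, for every $\varepsilon>0$, a perturbation of the data so that $\mathcal{M}^{+}(u-v)(x_{0})\ge f(x_{0})-g(x_{0})-\varepsilon$ when evaluated on the replacement function, and then let $\varepsilon\to 0$. Since $u-v$ is upper-semicontinuous, $\psi$ touches from below, and a single $C^{2}$ function cannot in general be split, the first step is the standard \emph{doubling of variables}: consider, for large $j$,
$$
\Phi_{j}(x,y):=u(x)-v(y)-\psi\!\left(\tfrac{x+y}{2}\right)-\tfrac{j}{2}\,|x-y|^{2}
$$
on $\overline{B_{r}(x_{0})}\times\overline{B_{r}(x_{0})}$ and let $(x_{j},y_{j})$ be a maximum point. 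By the usual compactness/semicontinuity lemma (see \cite{CS}, proof of Lemma 5.8, or the Crandall--Ishii--Lions machinery) one has $x_{j},y_{j}\to x_{0}$, $j|x_{j}-y_{j}|^{2}\to 0$, and $u(x_{j})-v(y_{j})\to (u-v)(x_{0})$ as $j\to\infty$.

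Next I would record that $\varphi_{1,j}(x):=v(y_{j})+\psi\!\left(\tfrac{x+y_{j}}{2}\right)+\tfrac{j}{2}|x-y_{j}|^{2}+C_{j}$ is a $C^{2}$ function touching $u$ from above at $x_{j}$ for an appropriate constant $C_{j}$, and symmetrically $\varphi_{2,j}(y):=u(x_{j})-\psi\!\left(\tfrac{x_{j}+y}{2}\right)-\tfrac{j}{2}|x_{j}-y|^{2}-C_{j}$ touches $v$ from below at $y_{j}$. By hypothesis $\mathcal{I}u\ge f$ and $\mathcal{I}v\le g$ in the viscosity sense, so Lemma \ref{clas sense max} (and its supersolution analogue) applies: replacing $u$ by $\varphi_{1,j}$ in $B_{r}(x_{j})$ we get $L_{\alpha\beta}$ evaluated classically with $\mathcal{I}$ of that replacement $\ge f(x_{j})$, and similarly for $v$ we get $\mathcal{I}$ of the $v$-replacement $\le g(y_{j})$. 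The key algebraic point is that the two replacement functions differ, near their contact points, by a \emph{single} $C^{2}$ function (namely $2\psi(\tfrac{x+y_{j}}{2})$ in the $x$-variable up to the quadratic term and the constant), so that for any fixed $L=L_{\alpha\beta}\in\mathfrak{L}_{0}$ one has, by linearity of $L$,
$$
L\big(w_{j}\big)(x_{j})\;\ge\; L u\text{-part}(x_{j})-L v\text{-part}(y_{j})\;+\;o(1)\;\ge\; f(x_{j})-g(y_{j})-E_{j},
$$
where $w_{j}$ is the $C^{2}$ replacement of $u-v$ built from $\psi$, and $E_{j}$ collects the error from the $\tfrac{j}{2}|x-y|^{2}$ terms and the tails; here one uses that the kernels satisfy the uniform anisotropic bounds \eqref{Kernel cond 2} so these tail contributions are controlled by $\|u\|_\infty+\|v\|_\infty$ times $\int_{\mathbb{R}^{n}\setminus\Theta_{r_{1}}}\|y\|^{-(c+s)}dy<\infty$ together with the $C^{1,1}$-at-a-point estimate coming from Lemma \ref{Barrier function} / the Remark after it, which is exactly what makes $L(w_j)(x_j)$ classically defined. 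Taking the supremum over $L\in\mathfrak{L}_{0}$ turns the left side into $\mathcal{M}^{+}w_{j}(x_{j})$.

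Finally I would pass to the limit $j\to\infty$: using continuity of $f$ and $g$, $x_{j},y_{j}\to x_{0}$, and $j|x_{j}-y_{j}|^{2}\to 0$ so that $E_{j}\to 0$, together with Lemma \ref{I is C^{1,1}} to guarantee that the relevant quantities converge to the value of $\mathcal{M}^{+}$ on the genuine $\psi$-replacement of $u-v$ at $x_{0}$, one obtains $\mathcal{M}^{+}(u-v)(x_{0})\ge f(x_{0})-g(x_{0})$, which is precisely the viscosity inequality to be proved. The main obstacle is the bookkeeping in the doubling argument: one has to verify carefully that the cross term $\tfrac{j}{2}|x-y|^{2}$, when fed into the \emph{nonlocal} operators $L_{\alpha\beta}$, produces an error that vanishes — this is where the anisotropic kernel bounds \eqref{Kernel cond 2} and the restriction $0<s<4/b_{\max}$ (ensuring integrability of $\|y\|^{2}/\|y\|^{c+s}$ near the origin, via the scaling estimate in the proof of Lemma \ref{Barrier function}) are used — and that the contact configuration survives the replacement, i.e. that $w_{j}$ really does lie above/below the correct pieces globally on $\mathbb{R}^{n}$, not just in $B_{r}(x_{0})$; outside $B_{r}(x_{0})$ one simply keeps $u-v$ itself and invokes its upper-semicontinuity. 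No new ideas beyond \cite{CS} are required, but the anisotropic norm $\|\cdot\|$ must replace the Euclidean one throughout the tail estimates.
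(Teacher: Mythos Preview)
The paper does not prove this lemma; it simply refers to \cite{CS}, so the benchmark is the Caffarelli--Silvestre argument (their Theorem~5.9). Two points about your sketch.

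First, a sign slip: to verify $\mathcal{M}^{+}(u-v)\ge f-g$ in the viscosity sense you must take $\psi$ touching $u-v$ \emph{from above}, not from below (see the paper's definition of subsolution). With ``from below'' the maximum of $\Phi_j$ does not converge to $(x_0,x_0)$, since $x_0$ is then a minimum of $u-v-\psi$, and the whole scheme collapses. The rest of your setup (the form of $\varphi_{1,j},\varphi_{2,j}$) is actually consistent with ``from above'', so this looks like a slip, but it must be corrected.

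Second, and more substantively, the approach in \cite{CS} is \emph{not} doubling of variables: it is regularization by sup-/inf-convolutions. One replaces $u$ by its sup-convolution $u_\epsilon$ (semiconvex) and $v$ by its inf-convolution $v^\epsilon$ (semiconcave); then, whenever a $C^2$ function $\psi$ touches $u_\epsilon-v^\epsilon$ from above at a point $x$, both $u_\epsilon$ and $v^\epsilon$ are individually $C^{1,1}$ at $x$, so every $L_{\alpha\beta}u_\epsilon(x)$ and $L_{\alpha\beta}v^\epsilon(x)$ is classically defined and finite, and one subtracts at the \emph{same} point. Your doubling route leaves you with $L_{\alpha\beta}$ acting on the $u$-replacement at $x_j$ and on the $v$-replacement at $y_j\neq x_j$; the sentence ``by linearity of $L$, $L(w_j)(x_j)\ge Lu\text{-part}(x_j)-Lv\text{-part}(y_j)+o(1)$'' is precisely the step that does not follow from linearity, because the two evaluations are at different base points and the tails of the replacements are $u$ (outside $B_r(x_j)$) and $v$ (outside $B_r(y_j)$), not $u-v$ outside a single ball. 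One can salvage doubling in the nonlocal setting by exploiting the diagonal translation $(x,y)\mapsto(x_j+z,y_j+z)$ to obtain the pointwise inequality $\delta(u,x_j,z)-\delta(v,y_j,z)\le \delta(\psi,(x_j+y_j)/2,z)$ and then arguing carefully with integrability (this is the Barles--Imbert style argument), but that is a genuine additional idea, not ``bookkeeping'', and it is not what \cite{CS} does. If you want to match the paper's reference, switch to the sup/inf-convolution proof; if you keep doubling, you must supply the missing pointwise comparison of second differences and justify the passage to the limit in $j$ uniformly over $L\in\mathfrak{L}_0$.
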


\section{H\"older Regularity: smooth solutions}
\label{H Regularity: smooth solutions}

As in \cite{S} we will use the De Giorgi's approach to achieve the $C^{\gamma}$-regularity for $\Delta^{\beta, s}$-harmonic smooth functions. We begin with a \textit{Growth lemma}.
\begin{lemma}[Growth lemma] 
\label{anisotropic-conditions}
If $u$ is a function that satisfies:
\begin{enumerate}
\item $(- \Delta)^{\beta, s}u \leq 0$ in $B_{1}$;
\item $u \leq 1$ in  $B_{1}$;
\item $u(x) \leq 2 \vert  2 x \vert^{\tau} - 1$ for all $x \in \mathbb{R}^{n} \setminus B_{1}$;
\item $\vert \left\lbrace x \in B_{1} :  u(x) \leq 0 \right\rbrace\vert > \delta$.
\end{enumerate}
Then, there exists a constant $\mu = \mu(n, s, \beta, \delta ) > 0$ such that $u \leq 1 - \mu$ in $B_{1/2}$. 
\end{lemma}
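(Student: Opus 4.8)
The plan is to follow Silvestre's strategy: use the barrier function $\eta$ from Lemma~\ref{Barrier function} together with the anisotropic Silvestre inequality \eqref{Fundamental estimate} to produce a competitor against which $u$ can be compared via the maximum principle for $(-\Delta)^{\beta,s}$. Concretely, I would set $v := u - \mu\,\eta$ for a small parameter $\mu>0$ to be fixed at the end (with $\mu < \kappa/2$, say, where $\kappa$ is the constant in \eqref{Fundamental estimate}), and argue by contradiction: suppose $\sup_{B_{1/2}} u > 1-\mu$. Since $\eta \leq 1$ on $B_1$ and $\eta \geq 1/2\cdot$ (something bounded below) on $B_{3/4}$ — more precisely $\eta \geq c_0>0$ on $\overline{B_{1/2}}$ — the function $v$ then attains a value close to $1$ somewhere in $B_{1/2}$, while $v \leq 1$ on all of $B_1$ by hypothesis (2) and $\eta\geq 0$. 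Let $x_0 \in \overline{B_{3/4}}$ be a point where $v$ attains its maximum over $\overline{B_{3/4}}$; the contradiction hypothesis forces $v(x_0)$ to be close to $1$ and $x_0$ to lie in the interior, so $\nabla v(x_0)=0$ and we may evaluate $(-\Delta)^{\beta,s}v(x_0)$ classically (here we use $u\in C^2$).

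Next I would estimate $(-\Delta)^{\beta,s}v(x_0)$ from two sides and derive a contradiction. On one hand, $(-\Delta)^{\beta,s}v(x_0) = (-\Delta)^{\beta,s}u(x_0) - \mu\,(-\Delta)^{\beta,s}\eta(x_0) \geq -\mu\,(-\Delta)^{\beta,s}\eta(x_0)$, using hypothesis (1). On the other hand, since $x_0$ is a max of $v$ over $\overline{B_{3/4}}$ with $\nabla v(x_0)=0$, and $v$ is controlled outside $B_{3/4}$ (using the growth condition (3) on $u$, the fact that $\eta$ vanishes outside $B_1$, and that $v$ is close to its max at $x_0$), we can write, for $z:=x_0$,
\begin{eqnarray*}
(-\Delta)^{\beta,s}v(z) &=& C_{\beta,s}\int_{\mathbb{R}^n}\big(v(z)-v(z+y)\big)\mathcal{K}_0(y)\,dy\\
&\geq& C_{\beta,s}\int_{\{v(z+y)\leq 0\}\cap B_1}\big(v(z)-v(z+y)\big)\mathcal{K}_0(y)\,dy\\
&& {}+ C_{\beta,s}\int_{\mathbb{R}^n\setminus B_{1/4}}\big(v(z)-v(z+y)\big)\mathcal{K}_0(y)\,dy,
\end{eqnarray*}
where the first integral is bounded below using $v(z)\geq 1-2\mu$ (say) and $|\{v\leq 0\}\cap B_1|\gtrsim\delta$ — this last point uses hypothesis (4): $\{u\leq 0\}\cap B_1$ has measure $>\delta$, and on that set $v = u-\mu\eta \leq u \leq 0$ — giving a lower bound of the form $(1-2\mu)\cdot\frac12\inf_{|\mathcal{B}|<\delta}\int_{\mathcal{B}}\mathcal{K}_0$, while the tail integral over $\mathbb{R}^n\setminus B_{1/4}$ is controlled below by $-2\int_{\mathbb{R}^n\setminus B_{1/4}}(|8y|^\tau-1)\mathcal{K}_0(y)\,dy$ thanks to the growth rate (3) (the point being that, after the rescaling implicit in the inequality, $v(z+y)\leq 2|2(z+y)|^\tau-1 \leq |8y|^\tau\cdot(\text{const})$ for $z\in B_{3/4}$, which is exactly the quantity appearing in \eqref{Fundamental estimate}).

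Combining the two bounds gives
\[
-\mu\,(-\Delta)^{\beta,s}\eta(z) \;\leq\; (-\Delta)^{\beta,s}v(z),
\]
and plugging in the lower bound for the right-hand side and rearranging yields
\[
\kappa\,(-\Delta)^{\beta,s}\eta(z) + 2\int_{\mathbb{R}^n\setminus B_{1/4}}(|8y|^\tau-1)\mathcal{K}_0(y)\,dy \;\geq\; \tfrac12\inf_{\mathcal{B}\subset B_2,\,|\mathcal{B}|<\delta}\int_{\mathcal{B}}\mathcal{K}_0(y)\,dy
\]
after choosing $\mu\leq\kappa$ and absorbing the factor $(1-2\mu)$ into constants (possibly shrinking $\delta$ or adjusting $\kappa$), which directly contradicts the anisotropic Silvestre inequality \eqref{Fundamental estimate}. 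Hence $\sup_{B_{1/2}}u\leq 1-\mu$, with $\mu=\mu(n,s,\beta,\delta)$ as produced by that inequality. I expect the main technical obstacle to be the careful bookkeeping in the tail estimate — matching the growth hypothesis $u(x)\leq 2|2x|^\tau-1$ against the precise quantity $\int_{\mathbb{R}^n\setminus B_{1/4}}(|8y|^\tau-1)\mathcal{K}_0(y)\,dy$ in \eqref{Fundamental estimate}, including the role of the $\|\cdot\|$-norm versus the Euclidean norm in the kernel bounds \eqref{Kernel cond 2} and the translation by $z\in B_{3/4}$ — and in verifying that the lower bound on $|\{v\leq 0\}\cap B_1|$ survives subtracting $\mu\eta$; the maximum-principle part and the use of the $C^2$ regularity to evaluate the operator pointwise are routine.
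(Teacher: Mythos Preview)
Your overall strategy is exactly the paper's, but you have a sign error in the competitor that breaks the argument as written: you must set $v = u + \kappa\eta$ (\emph{add} the bump), not $v = u - \mu\eta$. This matters in two places. First, with the wrong sign you cannot force the maximum of $v$ over $B_1$ to lie inside $B_{3/4}$: since $\eta$ is radially decreasing and vanishes outside $B_1$, subtracting $\mu\eta$ pushes the maximum outward, not inward. The paper takes $\mu=\kappa(\eta(1/2)-\eta(3/4))$ and $v=u+\kappa\eta$; then if $u(x_0)>1-\mu$ for some $x_0\in B_{1/2}$ one gets $v(x_0)>1+\kappa\eta(3/4)$, while $v\le 1+\kappa\eta(3/4)$ on $B_1\setminus B_{3/4}$, so the global maximum $v(x_1)=\sup_{B_1}v>1$ is attained at some $x_1\in B_{3/4}$. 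This global-in-$B_1$ maximality is essential, because in the near integral you need $v(x_1)-v(x_1+y)\ge 0$ for \emph{every} $y$ with $x_1+y\in B_1$, not just on $\{u\le 0\}$, in order to drop the complementary piece. Second, your use of hypothesis (1) is inverted: $(-\Delta)^{\beta,s}u\le 0$ gives an \emph{upper} bound $(-\Delta)^{\beta,s}v(x_1)\le \kappa(-\Delta)^{\beta,s}\eta(x_1)$ for $v=u+\kappa\eta$, which is then compared against the \emph{lower} bound coming from the integral splitting; with your sign you produced two lower bounds, which do not combine.

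Once the sign is fixed, your outline matches the paper line by line: split at $\mathcal B=\{y:x_1+y\in B_1\}$; on $\mathcal B_0=\{y\in\mathcal B:u(x_1+y)\le 0\}$ use $v(x_1)\ge 1$ and $\kappa<\tfrac14$ to get $v(x_1)-v(x_1+y)\ge 1-\kappa\eta(x_1+y)\ge\tfrac12$, hence $I_1\ge\tfrac12\int_{\mathcal B_0}\mathcal K_0$; on $\mathbb R^n\setminus\mathcal B$ use (3) and $|x_1|<3/4$, together with $(\mathbb R^n\setminus\mathcal B)\subset(\mathbb R^n\setminus B_{1/4})$ and negativity of $2-2^{\tau+1}(3/4+|y|)^{\tau}$ there, to land on $I_2\ge -2\int_{\mathbb R^n\setminus B_{1/4}}(|8y|^\tau-1)\mathcal K_0$; then combine with the upper bound to contradict \eqref{Fundamental estimate}. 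Your anticipated tail ``obstacle'' is thus routine, and no $\|\cdot\|$-versus-$|\cdot|$ issue arises since both sides involve the same kernel $\mathcal K_0$.
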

\begin{proof}
Consider $\mu =  \kappa ( \eta (1/2) - \eta(3/4) )$. Suppose, for the purpose of contradiction, that there exists $x_{0} \in B_{\frac{1}{2}}$ such that
\begin{eqnarray}
u(x_{0}) > 1 - \mu = 1 -  \kappa \eta \left(1/2\right) +  \kappa \eta(3/4).
\end{eqnarray}
Thus, since $\eta$ is decreasing in any ray from the origin and $u \leq 1$ in  $B_{1}$, we have
\begin{eqnarray}
v(x_{0})  > v(x), \quad \text{for all} \ x \in B_{1} \setminus B_{\frac{3}{4}},
\end{eqnarray}
where $v(x) = u(x) + \kappa \eta(x)$. Then, we conclude that 
\begin{eqnarray}
1 < \sup_{x \in B_{1}}v(x) = v(x_{1})
\end{eqnarray}
for some $x_{1} \in B_{\frac{3}{4}}$. If we define
$$\mathcal{B}_{} = \left\lbrace y \in \mathbb{R}^{n} : x_{1} + y \in B_{1} \right\rbrace \quad \text{and} \quad \mathcal{B}_{0} = \left\lbrace y \in \mathbb{R}^{n} : x_{1} + y \in B_{1}, \ u(x_{1} + y) \leq 0 \right\rbrace$$
we can write
\begin{eqnarray}
(- \Delta)^{\beta, s}v(x_{1}) & = & \int_{\mathbb{R}^{n}} (v\left(x_{1} \right) - v\left( x_{1} + y \right) ) \mathcal{K}_{0}(y)dy \\ \nonumber
& = & I_{1} + I_{2},
\end{eqnarray}
where we denote
\begin{eqnarray*}
I_{1}  =   \int_{\mathcal{B}_{}} (v\left(x_{1} \right) - v\left( x_{1} + y \right) ) \mathcal{K}_{0}(y)dy \quad \text{and} \quad I_{2} = \int_{\mathbb{R}^{n} \setminus \mathcal{B}_{}} (v\left(x_{1} \right) - v\left( x_{1} + y \right) )\mathcal{K}_{0}(y)dy.
\end{eqnarray*}
Since $v$ has a maximum at $x_{1}$ and $v(x_{1}) \geq 1$ we estimate
\begin{eqnarray*}
I_{1} & = &  \int_{\mathcal{B}_{0}} (v\left(x_{1} \right) - v\left( x_{1} + y \right) )\mathcal{K}_{0}(y)dy +  \int_{\mathcal{B}_{} \setminus \mathcal{B}_{0}} (v\left(x_{1} \right) - v\left( x_{1} + y \right) ) \mathcal{K}_{0}(y)dy \\ \nonumber
& \geq &  \int_{\mathcal{B}_{0}} (v\left(x_{1} \right) - v\left( x_{1} + y \right) )\mathcal{K}_{0}(y)dy \\ \nonumber
& \geq & \int_{\mathcal{B}_{0}} (1 - \kappa \eta (x_{1} + y) ) \mathcal{K}_{0}(y)dy \\ \nonumber
& \geq & \dfrac{1}{2} \int_{\mathcal{B}_{0}} \mathcal{K}_{0}(y)dy. 
\end{eqnarray*}
Using the conditions 2 and 3 we find
\begin{eqnarray*}
I_{2} & = & \int_{\mathbb{R}^{n} \setminus \mathcal{B}_{}} (v\left(x_{1} \right) - v\left( x_{1} + y \right) )\mathcal{K}_{0}(y)dy  \\ \nonumber
& \geq & \int_{\mathbb{R}^{n} \setminus \mathcal{B}_{}} \left [1 -  (2 \vert  2 (x_{1} + y )\vert^{\tau} -1 )\right ]\mathcal{K}_{0}(y)dy \\ \nonumber
& = & \int_{\mathbb{R}^{n} \setminus \mathcal{B}_{}} \left [2 -  2^{\tau + 1} \vert  x_{1} + y \vert^{\tau} \right ]\mathcal{K}_{0}(y)dy  \\ \nonumber
& \geq & \int_{\mathbb{R}^{n} \setminus \mathcal{B}_{}} \left [2 -  2^{\tau + 1} ( 3/4 + \vert y \vert)^{\tau} \right ]\mathcal{K}_{0}(y)dy.
\end{eqnarray*}
Moreover, since $(\mathbb{R}^{n} \setminus \mathcal{B}_{} ) \subset (\mathbb{R}^{n} \setminus B_{1/4})$ we obtain

\begin{eqnarray*}
I_{2} & \geq  & \int_{\mathbb{R}^{n} \setminus \mathcal{B}_{}} \left [2 -  2^{\tau + 1} \vert  3/4 + y \vert^{\tau} \right ]\mathcal{K}_{0}(y)dy \\ \nonumber
& = & \int_{\mathbb{R}^{n} \setminus B_{\frac{1}{4}}} \left [2 -  2^{\tau + 1} \vert  3/4 + y \vert^{\tau} \right ]\mathcal{K}_{0}(y)dy - \int_{(\mathbb{R}^{n} \setminus B_{1/4}) \cap \mathcal{B}_{}} \left [2 -  2^{\tau + 1}( 3/4 + \vert y \vert)^{\tau} \right ]\mathcal{K}_{0}(y)dy \\ \nonumber
&\geq & \int_{\mathbb{R}^{n} \setminus B_{\frac{1}{4}}} \left [2 -  2^{\tau + 1} ( 3/4 + \vert y \vert)^{\tau}\right ]\mathcal{K}_{0}(y)dy.
\end{eqnarray*}
From condition 1 we have
$$(- \Delta)^{\beta, s}v (x_{1}) = (- \Delta)^{\beta, s} (u(x_{1}) + \kappa \eta(x_{1})) \leq \kappa (- \Delta)^{\beta, d}\eta(x_{1})$$
and using the condition 4 we obtain
\begin{eqnarray*}
\kappa (- \Delta)^{\beta, s}\eta(x_{1}) \geq 2\int_{\mathbb{R}^{n} \setminus B_{\frac{1}{4}}}  (1 - \vert 8 y \vert^{\tau})K_{0}(y)dy + \dfrac{1}{2} \inf_{\mathcal{B} \subset B_{2}, \ \vert \mathcal{B}  \vert > \delta}\int_{\mathcal{B}}\mathcal{K}_{0}(y)dy, 
\end{eqnarray*}
which contradicts \eqref{Fundamental estimate}.
\end{proof}

Using the anisotropic scaling $T^{}_{\max, r^{}}$ and  Lemma \ref{anisotropic-conditions} we get the following scaled version.
\begin{lemma}[Growth lemma-anisotropic] 
 \label{anisotropic-conditions 2}
If $u$ is a function that satisfies:
\begin{enumerate}
\item $(- \Delta)^{\beta, s}u \leq 0$ in $E^{\max}_{r,1}(x_{0})$;
\item $u \leq C$ in  $E^{\max}_{r,1}(x_{0})$;
\item $u(x) \leq C \left (2 \vert  2T^{-1}_{\max, r^{}}(x - x_{0})\vert^{\tau} - 1 \right)$ for all $x \in \mathbb{R}^{n} \setminus E^{\max}_{r,1}(x_{0})$;
\item $\dfrac{\vert \left\lbrace x \in E^{\max}_{r,1}(x_{0}) :  u(x) \leq 0 \right\rbrace\vert}{r^{\frac{b_{\max}}{2}c}}> \delta$.
\end{enumerate}
Then, there exists a constant $\mu = \mu(n, s, \beta, \delta) > 0$ such that $u \leq C(1 - \mu) $ in $E^{\max}_{\frac{r}{2}, 1}$. 
\end{lemma}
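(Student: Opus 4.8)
The plan is to reduce Lemma~\ref{anisotropic-conditions 2} to Lemma~\ref{anisotropic-conditions} by the change of variables induced by the anisotropic scaling $T_{\max,r}$, together with a rescaling in amplitude by the constant $C$. First I would set $v(z) := C^{-1} u\bigl(x_{0} + T_{\max,r} z\bigr)$; by item~5 of Lemma~\ref{Fundamental Geometry} one has $T_{\max,r}(B_{1}) = E^{\max}_{r,1}$, so the hypotheses of Lemma~\ref{anisotropic-conditions 2}, written in the $z$ variable, become exactly $v \leq 1$ in $B_{1}$ (from item~2), and $v(z) \leq 2\,\lvert 2z\rvert^{\tau} - 1$ for $z \in \mathbb{R}^{n}\setminus B_{1}$ (from item~3, since $T_{\max,r}^{-1}(x - x_{0}) = z$). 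For item~4, a Jacobian computation gives $\lvert\det T_{\max,r}\rvert = r^{\sum_{i} b_{\max}/b_{i}} = r^{\frac{b_{\max}}{2}c}$ (using $c = \sum_{i} 2/b_{i}$), so the density condition $\lvert\{x \in E^{\max}_{r,1}(x_{0}) : u(x) \leq 0\}\rvert / r^{\frac{b_{\max}}{2}c} > \delta$ translates precisely to $\lvert\{z \in B_{1} : v(z) \leq 0\}\rvert > \delta$, which is item~4 of Lemma~\ref{anisotropic-conditions}.

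The one nontrivial point is to verify that $v$ inherits the equation, i.e.\ that $(-\Delta)^{\beta,s} v \leq 0$ in $B_{1}$. Here I would use the homogeneity of the kernel $\mathcal{K}_{0}(y) = \Vert y\Vert^{-(c+s)}$ under $T_{\max,r}$: since $\Vert T_{\max,r} y\Vert^{2} = \sum_{i} \lvert r^{b_{\max}/b_{i}} y_{i}\rvert^{b_{i}} = r^{b_{\max}} \Vert y\Vert^{2}$, we get $\mathcal{K}_{0}(T_{\max,r} y) = r^{-\frac{b_{\max}}{2}(c+s)}\,\mathcal{K}_{0}(y)$, while the volume element transforms by $r^{\frac{b_{\max}}{2}c}\,dy$. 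Changing variables $y \mapsto T_{\max,r} y$ in the integral defining $(-\Delta)^{\beta,s} v(z)$ and absorbing the constant $C^{-1}$, these factors combine so that
\begin{eqnarray*}
(-\Delta)^{\beta,s} v(z) = C^{-1} r^{-\frac{b_{\max}}{2}s}\,(-\Delta)^{\beta,s} u\bigl(x_{0} + T_{\max,r} z\bigr),
\end{eqnarray*}
up to checking that the truncation term $\chi_{B_{1}}(y)\,\nabla u\cdot y$ behaves correctly under the scaling (it does, by symmetry of $\mathcal{K}_{0}$, the relevant principal-value cancellation makes the cutoff region irrelevant, exactly as in the identities recorded in Subsection~\ref{preliminaries vs}). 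Since $r^{-\frac{b_{\max}}{2}s} > 0$ and $C > 0$, the sign is preserved: $(-\Delta)^{\beta,s} u \leq 0$ on $E^{\max}_{r,1}(x_{0})$ gives $(-\Delta)^{\beta,s} v \leq 0$ on $B_{1}$.

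With all four hypotheses of Lemma~\ref{anisotropic-conditions} verified for $v$, that lemma yields a constant $\mu = \mu(n,s,\beta,\delta) > 0$ with $v \leq 1 - \mu$ in $B_{1/2}$. Undoing the substitution, $u \leq C(1-\mu)$ on $x_{0} + T_{\max,r}(B_{1/2}) = E^{\max}_{r,1/2}(x_{0})$, and by item~3 of Lemma~\ref{Fundamental Geometry} we have $E^{\max}_{\frac{r}{2},1}(x_{0}) \subset E^{\max}_{r,1/2}(x_{0})$, which gives the claimed conclusion $u \leq C(1-\mu)$ in $E^{\max}_{\frac{r}{2},1}$. I expect the main obstacle to be purely bookkeeping: tracking the several powers of $r$ (in the kernel, in the Jacobian, and in the $\tau$-growth term after applying $T_{\max,r}^{-1}$) and making sure the $\lvert 2 z\rvert^{\tau}$ growth is genuinely invariant — i.e.\ that applying $T_{\max,r}^{-1}$ to the exterior condition reproduces the \emph{same} barrier profile rather than a rescaled one; this is why the statement is phrased with $T_{\max,r}^{-1}(x - x_{0})$ inside the exterior bound, and the verification amounts to just reading off that substitution.
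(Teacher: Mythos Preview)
Your proof is correct and follows exactly the paper's approach: rescale via $v(z) = C^{-1}u(x_{0} + T_{\max,r}z)$, transfer the four hypotheses to $v$ on $B_{1}$ (using $T_{\max,r}(B_{1})=E^{\max}_{r,1}$ and the Jacobian $|\det T_{\max,r}|=r^{\frac{b_{\max}}{2}c}$), apply Lemma~\ref{anisotropic-conditions}, and conclude via the inclusion $E^{\max}_{r/2,1}\subset E^{\max}_{r,1/2}$ from Lemma~\ref{Fundamental Geometry}. Your scaling identity has a harmless sign slip---the factor relating $(-\Delta)^{\beta,s}v$ to $(-\Delta)^{\beta,s}u$ is $r^{+\frac{b_{\max}}{2}s}$ rather than $r^{-\frac{b_{\max}}{2}s}$---but since only positivity of this factor is used, the argument is unaffected.
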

\begin{proof}
Define
\begin{eqnarray}
v(x) = \dfrac{u(T^{}_{\max, r^{}}x + x_{0})}{C},
\end{eqnarray}
for all $x \in \mathbb{R}^{n}$. Since $T^{}_{\max, r^{}}(B_{1}) = E^{\max}_{r,1}$ we conclude that $v$ satisfies 2 and 3. Furthermore, we find
\begin{eqnarray}
(- \Delta)_{}^{\beta, s}v(x) \leq 0 \quad \text{and} \quad \vert \left\lbrace x \in B_{1} :  v(x) \leq 0 \right\rbrace\vert > \delta.
\end{eqnarray}
By Lemma \ref{anisotropic-conditions} there exists a constant $\mu = \mu(n, s, \beta) > 0$ such that $v \leq 1 - \mu$ in $B_{1/2}$. Thus, we find $u \leq C(1 - \mu) $ in $E^{\max}_{r, 1/2}$. Finally, by Lemma \ref{Fundamental Geometry} we have $E^{\max}_{\frac{r}{2}, 1} \subset E^{\max}_{r, 1/2}$ and the Lemma \ref{anisotropic-conditions 2} is concluded.
\end{proof}

\begin{theorem}
\label{Theorem particular case}
If $u$ is a bounded function that satisfies $(- \Delta)^{\beta, s}u = 0$ in $E^{\max}_{2r, 1}$, then for $\delta=\frac{\vert B_{1} \vert}{2}$ there exist constants $\gamma = \gamma (n, s, \beta) \in (0,1)$ and $C=C(n, s, \beta) >0$ such that 
\begin{eqnarray}
\sup_{x, y \in E^{\max}_{r, 1}}\dfrac{\vert u(x) - u(y) \vert}{\Vert x - y \Vert^{\gamma}} \leq \dfrac{C}{r^{\gamma}}\Vert u \Vert_{\infty}.
\end{eqnarray}
In particular, $u \in C_{loc}^{\frac{\gamma b_{\min}}{2}}(E^{\max}_{r,1})$. 
\end{theorem}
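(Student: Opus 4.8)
The plan is to run De Giorgi's oscillation‑reduction scheme, in the anisotropic form of Silvestre's argument \cite{S}, on the dyadic family $\{E^{\max}_{2^{-k}r,1}\}_{k\ge0}$, with the anisotropic dilation $T_{\max,\rho}$ in the role of the usual dilation. I would first normalize: since $T_{\max,\rho}(E^{\max}_{2,1})=E^{\max}_{2\rho,1}$ (Lemma \ref{Fundamental Geometry}(5)) and $(-\Delta)^{\beta,s}$ commutes with $T_{\max,\rho}$ up to a positive constant, the rescaled function $x\mapsto u(T_{\max,r}x)/\|u\|_\infty$ is again $(-\Delta)^{\beta,s}$‑harmonic, now in $E^{\max}_{2,1}$; so I may assume $r=1$ and $\|u\|_\infty\le1$. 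The goal is then to produce a nonincreasing sequence $(M_k)$ and a nondecreasing sequence $(m_k)$ with $M_k-m_k=\omega^k$ for a suitable $\omega=\omega(n,\beta,s)\in(0,1)$ — kept as an exact equality by sliding $[m_{k+1},M_{k+1}]$ inside $[m_k,M_k]$ whenever the true oscillation is smaller — and $m_k\le u\le M_k$ on $E^{\max}_{2^{-k},1}$, starting from $M_0=1$, $m_0=-1$ and $E^{\max}_{1,1}=B_1$.

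For the inductive step, set $c_k=\tfrac12(M_k+m_k)$ and
$$v_k(x):=\frac{2}{M_k-m_k}\bigl(u(T_{\max,2^{-k}}x)-c_k\bigr),$$
so that $|v_k|\le1$ on $B_1=E^{\max}_{1,1}$ (because $T_{\max,2^{-k}}(B_1)=E^{\max}_{2^{-k},1}$) and $(-\Delta)^{\beta,s}v_k=0$ in $B_1$ (because the equation for $u$ holds on $E^{\max}_{2,1}\supset E^{\max}_{2^{-k},1}$ and the operator kills the affine change). Since $u$ is at once a sub‑ and a supersolution, and $B_1$ is, up to a null set, the union of $\{v_k\le0\}$ and $\{v_k\ge0\}$, one of these sets has measure $\ge|B_1|/2=\delta$; applying the Growth Lemma \ref{anisotropic-conditions} to $v_k$ in the first case and to $-v_k$ in the second gives $\pm v_k\le1-\mu$ on $E^{\max}_{1/2,1}$, hence — using $T_{\max,2^{-k}}(E^{\max}_{1/2,1})=E^{\max}_{2^{-(k+1)},1}$ — that the oscillation of $u$ on $E^{\max}_{2^{-(k+1)},1}$ is at most $(1-\tfrac\mu2)(M_k-m_k)$, so that $\omega\ge1-\mu/2$ closes the induction. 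The genuinely delicate point is verifying hypothesis (3) of the Growth Lemma for $v_k$ and $-v_k$, i.e.\ $|v_k(x)|\le2|2x|^\tau-1$ for $x\notin B_1$: writing such an $x$ in a Euclidean shell $B_{2^p}\setminus B_{2^{p-1}}$ ($p\ge1$), Lemma \ref{Fundamental Geometry}(3) gives $B_{2^p}\subset E^{\max}_{2^p,1}$, hence $T_{\max,2^{-k}}x\in E^{\max}_{2^{p-k},1}$, a set on which $u$ is already pinched between $m_{k-p}$ and $M_{k-p}$ (or, when $p\ge k$, merely controlled by $\|u\|_\infty\le1$); feeding in $M_\ell-m_\ell=\omega^\ell$ bounds $|v_k(x)|$ by a fixed multiple of $\omega^{-p}$, which is absorbed by $2|2x|^\tau-1\ge2^{\tau p+1}-1$ once $\mu$ and $\tau$ (hence $\omega$) are in a compatible range — concretely, $\omega$ close enough to $1$ that $\log_2(1/\omega)\le\tau$, which is possible since the multiplier $\kappa$ in the Silvestre inequality may be taken as small as we like. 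Reconciling this Euclidean polynomial tail with the anisotropic geometry is the main obstacle; the rest of the induction is bookkeeping.

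Finally, the decay $M_k-m_k=\omega^k$ yields, after undoing the normalization, $\mathrm{osc}_{E^{\max}_{2^{-k}r,1}(x_0)}u\le C\omega^k\|u\|_\infty$ around any $x_0$ with $E^{\max}_{2r,1}(x_0)$ still inside the domain of harmonicity; since the equation is translation invariant and $E^{\max}_{r,1}$ is compactly contained in $E^{\max}_{2r,1}$ with a uniform gap, a standard covering argument upgrades this to a bound uniform over $x_0\in E^{\max}_{r,1}$, with the factor $r^{-\gamma}$. For $x,y\in E^{\max}_{r,1}$ choose the least $k$ with $y\in E^{\max}_{2^{-k}r,1}(x)$; on the shell $E^{\max}_{\rho,1}(x)\setminus E^{\max}_{\rho/2,1}(x)$ one has $\|x-y\|\simeq\rho^{b_{\max}/2}$, because $E^{\max}_{\rho,1}(x)=x+T_{\max,\rho}(B_1)$, $\|T_{\max,\rho}z\|=\rho^{b_{\max}/2}\|z\|$, and $\|z\|\simeq1$ on $B_1\setminus E^{\max}_{1/2,1}$. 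Writing $\omega=2^{-\gamma_1}$, the decay reads $|u(x)-u(y)|\le Cr^{-\gamma}\|u\|_\infty\,\|x-y\|^{\gamma}$ with $\gamma:=2\gamma_1/b_{\max}\in(0,1)$. Lastly, for $|x-y|\le1$ the elementary inequality $\|x-y\|^2=\sum_i|x_i-y_i|^{b_i}\le n\,|x-y|^{b_{\min}}$ gives $\|x-y\|^{\gamma}\le n^{\gamma/2}|x-y|^{\gamma b_{\min}/2}$, whence $|u(x)-u(y)|\le C|x-y|^{\gamma b_{\min}/2}\|u\|_\infty$ locally, i.e.\ $u\in C^{\gamma b_{\min}/2}_{\mathrm{loc}}(E^{\max}_{r,1})$.
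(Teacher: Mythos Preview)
Your proposal is correct and follows essentially the same De Giorgi oscillation-decay scheme as the paper: the paper packages the rescaling into a separate ``anisotropic Growth Lemma'' (Lemma~\ref{anisotropic-conditions 2}) and then applies it to the value-normalized function, whereas you rescale in space first via $T_{\max,2^{-k}}$ and invoke Lemma~\ref{anisotropic-conditions} directly, but since Lemma~\ref{anisotropic-conditions 2} is itself proved by exactly that rescaling the two arguments coincide. Your verification of the tail condition~(3), the choice $\gamma=2\alpha/b_{\max}$ with $2^{-\alpha}=\omega$, and the final passage from $\|\cdot\|$ to the Euclidean exponent $\gamma b_{\min}/2$ all match the paper's computation (the paper's sharper bookkeeping yields $v_k(x)\le 2\omega^{-p}-1$ rather than $C\omega^{-p}$, which is why no extra slack in $\tau$ is needed, but your remark that $\kappa$ can be taken small handles this).
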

\begin{proof}
By considering the anisotropic scaling $v(x) = u(T^{}_{\max, r^{}}x)/2 \Vert u \Vert_{\infty}$ we can suppose that $\text{osc}_{\mathbb{R}^{n}} u = 1$ and $r = 1$. As in \cite{S}, given $x_{0} \in B_{1}$ we will construct a nondecreasing sequence $c_{k}$ and a nonincreasing sequence $d_{k}$ such that $d_{k} - c_{k} = 2^{-k\alpha}$
\begin{eqnarray}
\label{Cond. ind. 1}
d_{k} - c_{k} = 2^{-k\alpha} \quad \text{and} \quad c_{k} \leq u \leq d_{k} \quad \text{in} \ E^{\max}_{r_{k}, 1}(x_{0}),
\end{eqnarray}
where $r_{k} = r_{0}^{k}$ for any integer number $k$ and $0 < \alpha < 1$ will be chosen appropriately. Now we consider two cases:\\
\vspace{0,3cm}\\
Case 1: $k \leq 0$.\\

Since $\text{osc}_{\mathbb{R}^{n}} u = 1,$ we can write 
\begin{eqnarray}
c_{k} = \inf_{\mathbb{R}^{n}} u \quad \text{and} \quad d_{k} = c_{k} + r_{k}^{\alpha}, 
\end{eqnarray}
for $k \leq 0$ and for all $\alpha \in (0, 1)$.\\
\vspace{0,3cm}\\
Case 2: $k \geq 1$.\\

Suppose that we already have $c_{j}$ and $d_{j}$ for $j = 1, \dots, k$. We will find $c_{k+1}$ and $d_{k+1}$ satisfying \eqref{Cond. ind. 1}. In fact, if
\begin{eqnarray}
\mathfrak{m} = \dfrac{c_{k} + d_{k}}{2} 
\end{eqnarray}
then by \eqref{Cond. ind. 1} we find
\begin{eqnarray}
\vert u - \mathfrak{m} \vert \leq \dfrac{2^{-k\alpha}}{2}  \quad \text{in} \ E^{\max}_{r_{k}, 1}(x_{0}).
\end{eqnarray}
Now define
\begin{eqnarray}
v(x) = 2\dfrac{(u(x) - \mathfrak{m})}{r_{k}^{\alpha}},
\end{eqnarray}
for all $x \in E^{\max}_{r_{k}, 1}(x_{0})$. Clearly, we have 
\begin{eqnarray}
\vert v  \vert \leq 1 \quad \text{in} \ E^{\max}_{r_{k}, 1}(x_{0})
\end{eqnarray}
and 
\begin{eqnarray}
(- \Delta)^{\beta, d} v  \leq 0 \quad \text{in} \ E^{\max}_{r_{k}, 1}(x_{0}).
\end{eqnarray}
Next, we will analysis two cases:\\
\vspace{0,3cm}\\
(i) Assume that 
\begin{eqnarray}
\dfrac{\vert \left\lbrace x \in E^{\max}_{r_{k}, 1}(x_{0}) :  v(x) \leq 0 \right\rbrace\vert}{r_{k}^{c\frac{b_{\max}}{2}}} \geq \frac{\vert B_{1} \vert}{2}.
\end{eqnarray}
Taking into account that
\begin{eqnarray}
x \in \mathbb{R}^{n} \setminus E^{\max}_{r_{k}, 1}(x_{0})  = T^{-1}_{\max, r_{k}}(\mathbb{R}^{n} \setminus B_{1}(x_{0}))
\end{eqnarray}
we obtain
\begin{eqnarray}
T^{-1}_{\max, r_{k}}(x - x_{0})  \in \mathbb{R}^{n} \setminus B_{1}.
\end{eqnarray}
Thus, there exists $j \in \mathbb{N}$ such that 
\begin{eqnarray}
2^{j} \leq \vert T^{-1}_{\max, r_{k}}(x - x_{0})   \vert \leq 2^{j+1}
\end{eqnarray}
Hence, we find 
\begin{eqnarray}
T^{-1}_{\max, r_{k}}(x - x_{0})   \in B_{2^{(j+1)}} 
\end{eqnarray}
and from Lemma \ref{Fundamental Geometry} 
\begin{eqnarray}
x - x_{0} \in E^{\max}_{r_{k}, 2^{(j+1)}} \subset E^{\max}_{2^{-k + j +1}, 1} = E^{\max}_{r_{(k - j -1)}, 1}.
\end{eqnarray}
Thus, by inductive hypothesis we estimate
\begin{eqnarray}
v(x) & = & 2 \dfrac{(u(x) - \mathfrak{m})}{r_{k}^{\alpha}} \\ \nonumber
& \leq & 2 \dfrac{(a_{k -j -1} - \mathfrak{m})}{2^{-k\alpha}}
\end{eqnarray}
and since $c_{k}$ is a nondecreasing sequence we obtain
\begin{eqnarray}
v(x) &  \leq  & 2  \dfrac{(a_{k -j -1} - \mathfrak{m})}{r_{k}^{\alpha}}.  \\ \nonumber
& = & 2 \dfrac{(a_{k -j -1} - c_{k -j -1} +  c_{k -j -1} - \mathfrak{m})}{r_{k}^{\alpha}}  \\ \nonumber
& \leq & 2 \dfrac{(a_{k -j -1} - c_{k -j -1} + c_{k} - \mathfrak{m})}{r_{k}^{\alpha}} \\ \nonumber
& \leq & 2 \left(\dfrac{2^{-(k -j -1)\alpha}}{r_{k}^{\alpha}} - \dfrac{1}{2}  \right) \\ \nonumber
& = & 2 (22^{j})^{\alpha} - 1,
\end{eqnarray}
for all $x \in \mathbb{R}^{n} \setminus E^{\max}_{r_{k}, 1}(x_{0})$. If we take $\alpha \in \left( 0, \tau \right]$ we get
\begin{eqnarray}
v(x) \leq \left (2 \vert  2 T^{-1}_{\max, r_{k}}(x - x_{0})  \vert^{\tau} - 1 \right) \quad \text{for all} \ x \in \mathbb{R}^{n} \setminus E^{\max}_{r_{k}, 1}(x_{0}).
\end{eqnarray}
Then, we can apply the Lemma \ref{anisotropic-conditions 2} to obtain $v \leq 1 - \mu$ in $E^{\max}_{r_{k}/2, 1} (x_{0})= E^{\max}_{r_{k+1}, 1}(x_{0})$. We then scale back to $u$ to find
\begin{eqnarray}
u \leq c_{k} + \left( \dfrac{2 - \mu}{2} \right) r_{k}^{\alpha} \quad \text{in} \ E^{\max}_{r_{k+1}, 1}(x_{0}).
\end{eqnarray}
Now we define $c_{k+1} = c_{k}$ and $d_{k} = c_{k} + r_{k+1}^{\alpha}$. Clearly, $c_{k+1} \leq u$ in $E^{\max}_{r_{k+1}, 1}(x_{0})$. Finally, if we choose $\alpha = \min \left\lbrace \tau, \frac{\ln (1 - \mu/2)}{\ln 2} \right\rbrace$ we obtain 
\begin{eqnarray}
u \leq d_{k+1} \quad \text{in} \ E^{\max}_{r_{k+1}, 1}(x_{0}).
\end{eqnarray}
\vspace{0,3cm}\\
(ii) In the case
\begin{eqnarray}
\dfrac{\vert \left\lbrace x \in E^{\max}_{r_{k}, 1}(x_{0}) :  v(x) \leq 0 \right\rbrace\vert}{r_{k}^{c\frac{b_{\max}}{2}}}  <  \frac{\vert B_{1} \vert}{2}
\end{eqnarray}
we consider $v = - u$ to obtain
\begin{eqnarray}
u \geq d_{k} - \left( \dfrac{2 - \mu}{2} \right) r_{k}^{\alpha} \quad \text{in} \ E^{\max}_{r_{k+1}, 1}(x_{0}).
\end{eqnarray}
Now we define $d_{k+1} = d_{k}$ and $c_{k+1} = d_{k} - \left( \dfrac{2 - \mu}{2} \right) r_{k}^{\alpha}$.

Finally, given $x_{0} \in B_{1}$ and $y \in \mathbb{R}^{n}$ we can choose an integer $k$ such that $x_{0} - y \in (E^{\max}_{r_{k-1}, 1} \setminus E^{\max}_{r_{k}, 1})$. Thus, by Lemma \ref{Fundamental Geometry}  we can conclude
\begin{eqnarray}
\vert u(x_{0}) - u (y) \vert \leq r_{k-1}^{\alpha} \leq C \Vert x_{0} - y \Vert^{\gamma} ,
\end{eqnarray}
where $C=C(n, \alpha, b_{\min}, b_{\max}) >1$ and $\gamma = \frac{ 2\alpha}{b_{\max}}$.
\end{proof}


\begin{corollary}[Liouville property]
Let $u$ be a bounded function that satisfies $(- \Delta)^{\beta, s}u = 0$ in $\mathbb{R}^{n}$. Then, $u$ is constant.
\end{corollary}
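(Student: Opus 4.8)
The plan is to deduce the Liouville property directly from the H\"older estimate in Theorem \ref{Theorem particular case} by a scaling argument, exploiting that the estimate is \emph{universal} --- the constants $\gamma$ and $C$ depend only on $n$, $s$, $\beta$, and not on the radius $r$ nor on any norm of $u$ beyond $\Vert u\Vert_\infty$.

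\begin{proof}
Fix two points $x, y \in \mathbb{R}^n$. Since $(-\Delta)^{\beta,s}u = 0$ in all of $\mathbb{R}^n$, in particular $(-\Delta)^{\beta,s}u = 0$ in $E^{\max}_{2r,1}$ for every $r > 0$. By Lemma \ref{Fundamental Geometry} (item 3), the families $E^{\max}_{r,1}$ exhaust $\mathbb{R}^n$ as $r \to \infty$ (indeed $E^{\max}_{r,l} \subset E^{\max}_{rl,1}$ for $l \geq 1$, and $E^{\max}_{r,1}$ contains a Euclidean ball of radius comparable to $r^{b_{\max}/b_{\min}}$ when $r\geq 1$, which tends to infinity). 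Hence for all $r$ sufficiently large we have $x, y \in E^{\max}_{r,1}$, and Theorem \ref{Theorem particular case} gives
$$
\frac{\vert u(x) - u(y)\vert}{\Vert x - y\Vert^{\gamma}} \leq \sup_{z,w \in E^{\max}_{r,1}}\frac{\vert u(z) - u(w)\vert}{\Vert z - w\Vert^{\gamma}} \leq \frac{C}{r^{\gamma}}\Vert u\Vert_{\infty},
$$
with $C$ and $\gamma \in (0,1)$ independent of $r$. Since $u$ is bounded, the right-hand side tends to $0$ as $r \to \infty$, so $u(x) = u(y)$. As $x,y$ were arbitrary, $u$ is constant.
\end{proof}

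The argument is essentially immediate once Theorem \ref{Theorem particular case} is in hand; the only point requiring a little care is the geometric claim that the anisotropic ellipsoids $E^{\max}_{r,1}$ genuinely exhaust $\mathbb{R}^n$ as $r \to \infty$ (so that any fixed pair of points eventually lies inside one of them), which follows from the definition of $E^{\max}_{r,1}$ together with item 3 of Lemma \ref{Fundamental Geometry}. I expect no real obstacle here: the whole force of the statement is already encoded in the scale-invariance of the H\"older estimate, and the Liouville conclusion is the standard ``let the radius go to infinity'' consequence, exactly as in the classical De Giorgi--Nash--Moser or \cite{S} setting. One could alternatively phrase the proof via the rescaled functions $v_r(x) = u(T^{}_{\max,r}x)/\Vert u\Vert_\infty$ and pass to the limit, but the direct estimate above is cleaner and avoids any compactness argument.
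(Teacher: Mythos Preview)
Your proof is correct and follows essentially the same approach as the paper's own proof: fix two points, observe that for $r$ large enough both lie in $E^{\max}_{r,1}$, apply the H\"older estimate of Theorem \ref{Theorem particular case}, and send $r\to\infty$. The paper's version is terser (it simply asserts that one can choose such an $R$), while you supply a brief justification via Lemma \ref{Fundamental Geometry}, but the argument is the same.
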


\begin{proof}
Given $x, y \in \mathbb{R}^{n}$, choose $R > 0$ such that $x, y \in E^{\max}_{R, 1}$. By Theorem \ref{Theorem particular case} we have
\begin{eqnarray}
\dfrac{\vert u(x) - u(y) \vert}{\Vert x - y \Vert^{\gamma}} \leq \dfrac{C}{R^{\gamma}}\Vert u \Vert_{\infty}.
\end{eqnarray}
Taking $R > 0$ large enough, we get $u(x) = u(y)$. Hence, $u$ is constant.
\end{proof}

\section{H\"older Regularity: viscosity solutions}

\label{H Regularity: viscosity solutions}

In this section, we obtain the ingredients necessary to reach the interior $C^{\gamma}$ and $C^{1, \gamma}$ regularity for viscosity solutions of $\mathcal{I}u = 0$.
\subsection{Nonlocal anisotropic ABP estimate} \label{ABP Estimate}

In this subsection we get an ABP estimate for integro-differential equations like anisotropic fractional Laplacian. 
 
Let $u$ be a non positive function outside the ball $B_{1}$. We define the concave envelope of $u$ by 
$$
\Gamma \left( x \right) := \left \{ 
\begin{array}{lll}
\min \left\lbrace p\left( x\right): \ \text{for all planes} \ p \geq u^{+} \ \text{in} \ B_{3} \right\rbrace , & \text{ in } & B_{3} \\
\\
0 & \text{ in } & \mathbb{R}^{n}\setminus B_{3}. 
\end{array}
\right. 
$$  
 
\begin{lemma} \label{cov. 1 lemma} Let $u \leq 0$ in $\mathbb{R}^{n}\setminus B_{1}$ and $\Gamma$ be its concave envelope. Suppose $f \in L^{\infty}$ and $\mathcal{M}^{+}u\left( x \right) \geq -f\left( x\right)$ in $B_{1}$. Let $\rho_{0} = \rho_{0}\left( n \right) > 0$,
$$
 r_{k} := \rho_{0}2^{- \left( \frac{1}{q_{\min,s}}\right)}  2^{- \mathfrak{C}\left( \frac{b_{\min}}{2} \right) k  },
$$
where $\mathfrak{C}= \mathfrak{C}(b_{\min}, b_{\max})$ is a natural number such that 
$$
E_{lr, 1} \subset E_{r, 1/ 2},
$$
with $l = 2^{- \mathfrak{C}\left[ \frac{b_{\min}}{2} \right] }$ for all $r > 0$ and $q_{\min,s} = \frac{4}{b_{\min}} - s$. Given $M>0$, we define
$$W_{k}\left( x \right)  := E_{r_{k}, 1}\setminus E_{r_{k+1}, 1} \cap \left\lbrace y : u\left( x + y\right) < u\left( x \right) + \langle y, \nabla \Gamma \left( x \right)\rangle - M\left( \frac{q_{\min,s}}{q_{\max,s}}\right)r^{\frac{4}{b_{\min}}}_{k} \right\rbrace.$$ 
Then there exists a constant $C_{0}>0$, depending only on $n$, $\lambda$, $b_{\min}$ and  $b_{\max}$, such that, for any $x \in \left\lbrace u = \Gamma \right\rbrace $ and any $M>0$, there is a $k$ such that
\begin{equation}
\label{meas est 1}
\left | W_{k}\left( x \right)  \right | \leq C_{0} \frac{f\left( x \right)}{M} \left | E_{r_{k}, 1}\setminus E_{r_{k+1}, 1} \right |. 
\end{equation}
\end{lemma}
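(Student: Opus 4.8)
The plan is to mirror the argument of Lemma 8.4 in \cite{CS} (and the anisotropic refinement in \cite{CLU}), transferring it to our ellipsoidal geometry $E_{r,l}(x)$ via the scaling $T_{\beta,r}$. Fix a contact point $x \in \{u = \Gamma\}$ and fix $M > 0$. The core idea is a dichotomy: either the "bad set" $W_k(x)$ is a small fraction of the annulus $E_{r_k,1}\setminus E_{r_{k+1},1}$ for \emph{some} $k$ (which is exactly \eqref{meas est 1}), or it is a large fraction for \emph{every} $k$, and in the latter case we will derive a contradiction by showing $\mathcal{M}^{+}u(x) < -f(x)$, against the hypothesis. So assume, for contradiction, that $|W_k(x)| > C_0^{-1}(M/f(x))^{-1}\,|E_{r_k,1}\setminus E_{r_{k+1},1}|$ for every $k$ — i.e.\ the bad fraction $|W_k(x)|/|E_{r_k,1}\setminus E_{r_{k+1},1}|$ exceeds some fixed threshold $\theta = \theta(C_0)$ for all $k$.

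First I would record the one-sided bounds coming from the contact point. Since $\Gamma \geq u^{+}$ in $B_3$, is concave, and touches $u$ at $x$, the plane $y \mapsto u(x) + \langle y,\nabla\Gamma(x)\rangle$ lies above $u$ on $B_3$; hence for $y$ with $x+y \in B_3$ one has $\delta(u,x,y) = u(x+y)+u(x-y)-2u(x) \le \langle y,\nabla\Gamma(x)\rangle + \langle -y,\nabla\Gamma(x)\rangle = 0$, i.e. $\delta^{+}(u,x,y)=0$ there, while for $y$ outside the ball the values of $u$ are $\le 0$ so $\delta(u,x,y)$ is still controlled. Thus $\mathcal{M}^{+}u(x) = q_{\max,s}\int \dfrac{\Lambda\,\delta^{+} - \lambda\,\delta^{-}}{\Vert y\Vert^{c+s}}\,dy = -\lambda\, q_{\max,s}\int \dfrac{\delta^{-}(u,x,y)}{\Vert y\Vert^{c+s}}\,dy$ (modulo the outside contribution, which has a favorable sign). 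On $W_k(x)$, by definition $u(x+y) < u(x) + \langle y,\nabla\Gamma(x)\rangle - M\left(\frac{q_{\min,s}}{q_{\max,s}}\right) r_k^{4/b_{\min}}$, and combining with the plane bound on the opposite point $x-y$ gives $\delta(u,x,y) \le -M\left(\frac{q_{\min,s}}{q_{\max,s}}\right) r_k^{4/b_{\min}}$, so $\delta^{-}(u,x,y) \ge M\left(\frac{q_{\min,s}}{q_{\max,s}}\right) r_k^{4/b_{\min}}$ on $W_k(x)$.

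Next I would estimate the kernel lower bound on each annulus. On $E_{r_k,1}\setminus E_{r_{k+1},1}$, by the Fundamental Geometry (Lemma \ref{Fundamental Geometry}) and the relation $E_{r,1}(x)\subset\Theta_{r\sqrt n}$, the quantity $\Vert y\Vert$ is comparable to $r_k^{2/?}$ — more precisely one checks, using $T_{\beta,r_k}(B_1\setminus E_{1/2,1}) = E_{r_k,1}\setminus E_{r_k/2,1}$ and the choice of $r_k$, that $\Vert y\Vert^{c+s} \le C r_k^{\,\frac{c+s}{2}\cdot(\text{something})}$ on that annulus and that $|E_{r_k,1}\setminus E_{r_{k+1},1}| = r_k^{\,c\,/\!?}\,|B_1\setminus E_{l,1}|$ up to the Jacobian of $T_{\beta,r_k}$, whose determinant is $r_k^{\,c}$ with $c=\sum 2/b_i$. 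The exponent $4/b_{\min}$ in the definition of $W_k$ is precisely calibrated so that the product
$$
\big(\text{lower bound of }\mathcal{K}_0 \text{ on the annulus}\big)\times\big(\delta^{-}\text{ lower bound}\big)\times |W_k(x)|
$$
is bounded below by a \emph{$k$-independent} positive multiple of $\lambda\, M\,\theta$ (here the factors $r_k^{4/b_{\min}}$, $r_k^{-(c+s)\cdot\frac12\cdot\text{(max direction)}}$, and $r_k^{c}$ from the Jacobian must cancel; this is where the constant $2^{-1/q_{\min,s}}$ and the dilation factor $l = 2^{-\mathfrak C(b_{\min}/2)}$ in $r_k$ enter). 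Summing over all $k\ge 0$ then yields a \emph{divergent} lower bound for $\int \delta^{-}/\Vert y\Vert^{c+s}\,dy$, hence $\mathcal{M}^{+}u(x) = -\infty < -f(x)$ — but $\mathcal{M}^{+}u(x)\ge -f(x)$ is the hypothesis, a contradiction. (Actually, as in \cite{CS}, one need not sum to $\infty$: the hypothesis $\mathcal{M}^{+}u(x)\ge -f(x)$ with $f\in L^\infty$ gives an explicit finite upper bound $f(x)$, and one only needs finitely many annuli — or rather a single $k$ for which the annulus-contribution already exceeds $f(x)$ if the bad fraction there stays above $\theta$; this pins down $C_0$ as $C_0 \sim (\lambda\, c_1)^{-1}$ where $c_1$ is the uniform geometric constant just described.) Choosing $C_0$ accordingly gives \eqref{meas est 1} for that $k$.

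The main obstacle I anticipate is bookkeeping the exponents of $r_k$ across the three competing scalings appearing in the statement — the ellipse scaling $y_i \sim r^{2/b_i}$, the "max" scaling $y_i\sim r^{b_{\max}/b_i}$, and the Jacobian $r^c$ — and verifying that with the specific choice $r_k = \rho_0\,2^{-1/q_{\min,s}}\,2^{-\mathfrak C(b_{\min}/2)k}$ and the factor $q_{\min,s}/q_{\max,s}$ in front of $M r_k^{4/b_{\min}}$, all $r_k$-powers in the product cancel to leave a genuinely $k$-uniform constant $C_0 = C_0(n,\lambda,b_{\min},b_{\max})$. The geometric containments $E_{lr,1}\subset E_{r,1/2}$ and $E_{2^{-\mathfrak C}r}(x)\subset E_{r,1/4}(x)$ from Lemma \ref{Fundamental Geometry} are what make the annuli $E_{r_k,1}\setminus E_{r_{k+1},1}$ genuinely disjoint and comparable in the right way, so I would invoke those carefully; everything else is the De Giorgi–CS contradiction scheme adapted verbatim.
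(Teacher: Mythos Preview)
Your overall scheme matches the paper's proof exactly: at a contact point $\delta(u,x,y)\le 0$ everywhere, so $\mathcal{M}^{+}u(x)=-\lambda\,q_{\max,s}\int \delta^{-}/\Vert y\Vert^{c+s}\,dy$; decompose $E_{r_0,1}$ into the annuli $E_{r_k,1}\setminus E_{r_{k+1},1}$; assume \eqref{meas est 1} fails for every $k$; and derive a contradiction with $\mathcal{M}^{+}u(x)\ge -f(x)$.

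The one concrete error is your exponent claim: the $r_k$-powers do \emph{not} all cancel, and the resulting series is \emph{not} divergent. On $E_{r_k,1}$ one has $\Vert y\Vert\le r_k\sqrt n$ by Lemma~\ref{Fundamental Geometry}, so the kernel is $\gtrsim r_k^{-(c+s)}$; the annulus has measure $\sim r_k^{c}$ (Jacobian of $T_{\beta,r_k}$); and $\delta^{-}\ge M(q_{\min,s}/q_{\max,s})\,r_k^{4/b_{\min}}$ on $W_k$. Multiplying, the $k$-th term contributes a constant times
\[
q_{\max,s}\cdot r_k^{-(c+s)}\cdot M\frac{q_{\min,s}}{q_{\max,s}}\,r_k^{4/b_{\min}}\cdot C_0\frac{f(x)}{M}\,r_k^{c}
= C_0\,f(x)\,q_{\min,s}\,r_k^{\,4/b_{\min}-s}
= C_0\,f(x)\,q_{\min,s}\,r_k^{\,q_{\min,s}},
\]
which decays geometrically in $k$. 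The paper then sums the convergent series and uses that
\[
q_{\min,s}\sum_{k\ge 0} r_k^{\,q_{\min,s}}
= q_{\min,s}\,\frac{r_0^{\,q_{\min,s}}}{1-2^{-\mathfrak C(b_{\min}/2)\,q_{\min,s}}}
\]
is bounded below by a positive constant depending only on $n,b_{\min},b_{\max}$, uniformly in $s\in(0,4/b_{\max})$; this is precisely why $r_0$ carries the factor $2^{-1/q_{\min,s}}$ (so that $r_0^{\,q_{\min,s}}=\rho_0^{\,q_{\min,s}}/2$ stays bounded below) and why the ratio $q_{\min,s}/q_{\max,s}$ is built into the definition of $W_k$. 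One then gets $f(x)\ge c(n,\lambda,b_{\min},b_{\max})\,C_0\,f(x)$, the desired contradiction for $C_0$ large. So your argument goes through once you replace ``$k$-independent, divergent sum'' by ``summable geometric series whose total, multiplied by $q_{\min,s}$, has a uniform positive lower bound''.
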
 
\begin{proof}
Notice that $u$ is touched by the plane  
$$
\Gamma \left( x \right) + \langle y - x, \nabla \Gamma \left( x \right) \rangle 
$$
from above at $x$. From Lemma \ref{clas sense max}, $\mathcal{M}^{+} u \left( x \right)$ is defined classically and we get
\begin{equation}
\label{max exp}
\mathcal{M}^{+} u \left( x \right) =  q_{\max,s} \int_{\mathbb{R}^{n}} \dfrac{\Lambda \delta^{+} - \lambda \delta^{-}}{\Vert y \Vert^{c + s}} dy.
\end{equation}
We will show that 
\begin{equation}
\label{non posit delta}
\delta\left( y \right):= \delta \left( u, x, y \right) = u \left( x + y \right) + u \left( x - y \right) - 2u \left( x \right) \leq 0.
\end{equation}
In fact, if both $x-y \in B_{3}$ and $x+y \in B_{3}$ then we conclude that $\delta \left( y \right) \leq 0 $, since $u\left( x \right) = \Gamma \left( x \right) = p\left(  x \right)$, for some plane $p$ that remains above $u$ in the whole ball $B_{3}$. Moreover, if either $x-y \notin B_{3}$ or $x+y \notin B_{3}$, then both $x-y$ and $x+y$ are not in $B_{1}$, and thus $u\left( x + y \right) \leq 0 $ and $u\left( x - y \right) \leq 0$. Therefore, in any case the inequality \eqref{non posit delta} is proved. Combining \eqref{max exp} and \eqref{non posit delta}, we find
\begin{eqnarray}
\label{cov. 1 lemma est1}
- f\left( x \right) & \leq & \mathcal{M}^{+} u \left( x \right) \nonumber\\ 
& = & q_{\max,s} \int_{E_{r_{0}, 1}} \dfrac{ - \lambda \delta^{-} }{\Vert y \Vert^{c + s}} dy, 
\end{eqnarray}
where $r_{0} = \rho_{0}2^{- \frac{1}{q_{\min,s}}}$. Since $x \in \left\lbrace u = \Gamma \right\rbrace$, we would like to emphasize that $ y \in W_{k}\left( x \right) $ implies $-y \in W_{k}\left( x \right)$. Hence, we find
\begin{equation}
\label{cov. 1 lemma est3}
W_{k}\left( x \right) \subset E_{r_{k}, 1} \setminus E_{r_{k+1}, 1}\cap \left\lbrace y : - \delta \left( y \right) > 2 M \left(  \frac{q_{\min,s}}{q_{\max,s}} \right) r_{k}^{\frac{4}{b_{\min}}} \right\rbrace.
\end{equation}
Using \eqref{cov. 1 lemma est1}, we estimate
\begin{eqnarray}
\label{cov. 1 lemma est2}
f\left( x \right) & \geq & c\left(n, \lambda\right)\left[  q_{\max,s}  \sum_{k=0}^{\infty}\int_{E_{r_{k}, 1}\setminus E_{r_{k+1}, 1}} \dfrac{  \delta^{-} }{\Vert y \Vert^{c + s}}dy \right]  \nonumber \\ 
& \geq &  c\left(n, \lambda\right) \sum_{k=0}^{\infty}\left[ q_{\max,s} (n^{-\frac{c+s}{2}})r^{-(c + s)}_{k}\int_{W_{k}} \delta^{-} dy \right] . 
\end{eqnarray}
Moreover, we have
\begin{eqnarray*}
 \left |    E_{r_{k}, 1} \setminus E_{r_{k+1}, 1}  \right | = \left( \prod_{j=1}^{n} r_{k}^{\frac{2}{b_{i}}} \right)  \left |    B_{1} \setminus E_{l, 1}  \right | = r^{c}_{k} \left |    B_{1} \setminus E_{l, 1}  \right |,
\end{eqnarray*}
where $l =  2^{- \mathfrak{C}\frac{b_{\min}}{2}}$. Therefore, we find
\begin{eqnarray*}
\label{ABP NEW 1}
 \left |    E_{r_{k}, 1} \setminus E_{r_{k+1}, 1}  \right | \geq c(b_{\min}, b_{max}) r^{c}_{k}.
\end{eqnarray*}
Let us assume by contradiction that \eqref{meas est 1} is not valid. Then, from \eqref{cov. 1 lemma est3}, \eqref{cov. 1 lemma est2} and \eqref{ABP NEW 1}, we obtain
\begin{eqnarray}
f\left( x \right)  & \geq & \nonumber  c_{1}\left(n, \lambda, b_{\min}, b_{\max}\right)  \left[ q_{\min,s}  \sum_{k=0}^{\infty} \left( 2 M r^{q_{\min, s}}_{k} f(x) \frac{C_{0}}{M} \right) \right]  \\ \nonumber
& = & c_{2}(n, \lambda, b_{\min}, b_{\max}) f(x) C_{0} \left[ q_{\min,s}   \sum_{k=0}^{\infty} (2 r^{q_{\min, s}}_{k})\right]  \\ \nonumber
& \geq & c_{3}(n, \lambda, b_{\min}, b_{\max}) f(x) C_{0} \rho_{0}^{ q_{\min,s} }   \left[ q_{\min,s} \sum_{k=0}^{\infty} 2^{-\left( q_{\min,s}  \right)k}\right]  \\ \nonumber
& \geq & c_{2}(n, \lambda, b_{\min}, b_{\max}) f(x) C_{0}  \rho_{0}^{ \frac{4}{b_{\min}} }  \left[ q_{\min,s} \sum_{k=0}^{\infty} 2^{-\left( q_{\min,s}  \right)k}\right]  \\ \nonumber
& \geq &  c_{3}(n, \lambda, b_{\min}, b_{\max})f(x) C_{0}  \left[ q_{\min,s} \sum_{k=0}^{\infty} 2^{-\left( q_{\min,s}  \right)k}\right]. 
\end{eqnarray}
Then, we get
\begin{eqnarray*}
f\left( x \right) & \geq & \dfrac{c_{3} C_{0}q_{\min,s} f\left( x \right)}{1- 2^{-q_{\min,s}}}.
\end{eqnarray*}
Finally, since $\frac{q_{\min,s} }{1- 2^{-q_{\min,s}}}$ is bounded away from zero, for all $s \in \left( 0, \dfrac{4}{b_{\max}} \right)$, we find
$$
f\left( x \right)  \geq  c_{4}\left(n, \lambda, b_{min}, b_{\max} \right)C_{0}f\left( x \right),
$$
which is a contradiction if $C_{0}$ is chosen large enough.
\end{proof}

As in \cite{CLU}, the following result is a direct consequence of the arguments used in the proof of \cite[Lemma 8.4]{CS}.

\begin{lemma} \label{cov. 2 lemma, SC} Let $\Gamma$ be a concave function in $B_{1}$ and $v\in \mathbb{R}^{n}$. Assume that, for a small $\varepsilon > 0$,
$$
\left |\left(  B_{1} \setminus B_{\frac{1}{2}} \right) \cap \left\lbrace y : \Gamma \left( y\right) < \Gamma \left( 0 \right) + \langle T\left( y \right), v \rangle - h \right\rbrace \right | \leq \varepsilon \left | B_{1} \setminus B_{\frac{1}{2}} \right |,
$$
where $T:\mathbb{R}^{n} \rightarrow \mathbb{R}^{n}$ is a linear map. Then 
$$\Gamma \left(  y \right) \geq \Gamma (0) + \langle T\left( y \right), v \rangle - h$$ 
in the whole ball $B_{\frac{1}{2}}$.
\end{lemma}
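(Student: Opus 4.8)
The plan is to reduce the statement to the classical fact that a concave function which is close, in measure, to one of its supporting (sub)planes on an annulus must actually lie above that plane on the smaller ball, and then simply absorb the linear map $T$ by a change of variables. Concretely, set $\Phi := \Gamma \circ T$ where $T:\mathbb{R}^n \to \mathbb{R}^n$ is the given linear map (we may assume $T$ is invertible; otherwise $T(y)$ ranges over a lower-dimensional subspace and the inequality $\langle T(y),v\rangle$ only depends on the projection, so the statement follows a fortiori, or one perturbs $T$ slightly and passes to the limit). Since $T$ is linear, $\Phi$ is again concave, and the hypothesis rewrites as a statement purely about $\Phi$: $\Gamma(y) = \Phi(T^{-1}y)$, and $\langle T(y), v\rangle$ is the linear function $\ell(y)$ whose value matches a fixed linear functional. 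The measure condition becomes
\begin{equation*}
\bigl| (B_1 \setminus B_{1/2}) \cap \{ y : \Phi(T^{-1}y) < \Phi(0) + \ell(y) - h \} \bigr| \le \varepsilon \, |B_1 \setminus B_{1/2}|.
\end{equation*}

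Next I would invoke the argument of \cite[Lemma 8.4]{CS} directly, as the excerpt already advertises. The point of that lemma is the following elementary geometric dichotomy for a concave $\Gamma$: fix $y_0 \in B_{1/2}$ and suppose $\Gamma(y_0) < \Gamma(0) + \langle T(y_0), v\rangle - h$. By concavity, for every $y$ on the ray through $0$ and $y_0$ beyond $y_0$ (staying in $B_1$), one gets $\Gamma(y) < \Gamma(0) + \langle T(y), v\rangle - h$ as well, with a definite loss controlled by how far $y$ is past $y_0$; more robustly, one shows that a whole cone (or a fixed-proportion truncated cone) of directions emanating from a neighbourhood of $y_0$, intersected with $B_1\setminus B_{1/2}$, lies in the bad set, and this cone has measure bounded below by a dimensional constant $c(n) > 0$ independent of $T$, $v$, $h$. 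Choosing $\varepsilon < c(n)$ then contradicts the measure hypothesis, which forces $\Gamma(y_0) \ge \Gamma(0) + \langle T(y_0), v\rangle - h$ for every $y_0 \in B_{1/2}$.

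The one technical point requiring care is that in the original \cite{CS} setting there is no linear map $T$: the supporting plane is $\Gamma(0) + \langle y, v\rangle - h$. Here the plane is $\Gamma(0) + \langle T(y), v\rangle - h = \Gamma(0) + \langle y, T^{*}v\rangle - h$, so in fact $T$ can be swallowed into $v$ by replacing $v$ with $T^{*}v$, and then the statement is \emph{literally} \cite[Lemma 8.4]{CS} with $v$ replaced by $T^{*}v$. I would spell this out: since $\langle T(y),v\rangle = \langle y, T^{*}v\rangle$ for all $y$, the set $\{y : \Gamma(y) < \Gamma(0) + \langle T(y),v\rangle - h\}$ is exactly $\{y : \Gamma(y) < \Gamma(0) + \langle y, T^{*}v\rangle - h\}$, and the conclusion $\Gamma(y) \ge \Gamma(0) + \langle T(y),v\rangle - h$ on $B_{1/2}$ is identical to $\Gamma(y) \ge \Gamma(0) + \langle y, T^{*}v\rangle - h$ on $B_{1/2}$. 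Thus no change of variables on the domain is even needed — only a relabelling of the linear functional — and the result follows verbatim from \cite{CS}.

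The main obstacle is essentially bookkeeping rather than mathematics: making sure the reduction $\langle T(y),v\rangle = \langle y, T^{*}v\rangle$ is stated cleanly and that the hypotheses of \cite[Lemma 8.4]{CS} (concavity of $\Gamma$ on $B_1$, the specific annulus $B_1 \setminus B_{1/2}$, the smallness threshold $\varepsilon$ depending only on $n$) transfer without alteration. One should also note that $\varepsilon$ in the conclusion is the dimensional constant coming from the cone-measure estimate in \cite{CS}, so it is genuinely independent of $T$, $v$, and $h$, which is what makes the lemma usable in the subsequent covering and iteration arguments.
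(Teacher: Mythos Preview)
Your proposal is correct. The key observation—that $\langle T(y),v\rangle = \langle y, T^{*}v\rangle$, so the linear map $T$ can be absorbed into the vector $v$ and the statement becomes \emph{literally} \cite[Lemma 8.4]{CS} with $v$ replaced by $T^{*}v$—is clean and settles the lemma in one line. (Your initial attempt via $\Phi=\Gamma\circ T$ is unnecessary and would change the domain; you rightly abandon it.)

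The paper takes a different, more hands-on route: rather than reducing to \cite{CS}, it reproves the result directly by the symmetric-points averaging argument. For each $y\in B_{1/2}$ one finds a pair of small balls in the annulus $B_{1}\setminus B_{1/2}$ that are symmetric about $y$ (i.e., related by $z\mapsto 2y-z$); if $\varepsilon$ is small enough, each ball contains a point $z_{1},z_{2}$ in the ``good'' set, and since $y=\tfrac{1}{2}(z_{1}+z_{2})$, concavity of $\Gamma$ and linearity of $y\mapsto \langle T(y),v\rangle$ give the conclusion at $y$. This is exactly the mechanism behind \cite[Lemma 8.4]{CS}, carried out with $T$ present throughout. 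Your informal description of the \cite{CS} argument as a ``cone of bad directions'' is not quite what is actually done there---it is this midpoint averaging, not a ray/cone propagation---but since you ultimately cite the lemma rather than rely on that sketch, this does not affect correctness. Your reduction is shorter and highlights that the presence of $T$ is cosmetically irrelevant; the paper's version is self-contained and makes the geometric mechanism explicit, which is useful because the same symmetric-points idea is then transported to the anisotropic ellipsoids in the next lemma via $T_{\beta,r}$.
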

\begin{proof}
Let $y \in B_{\frac{1}{2}}$. There exist $B_{\frac{1}{2}}\left( y_{1}\right) \subset B_{1} \setminus B_{1/2}$ and $B_{\frac{1}{2}}\left( y_{2}\right) \subset B_{1} \setminus B_{1/2}$ such that 
$$
L\left( B_{\frac{1}{2}}\left( y_{1}\right) \right) = B_{\frac{1}{2}}\left( y_{2}\right),
$$
where $L: B_{\frac{1}{2}}\left( y_{1}\right) \rightarrow  B_{\frac{1}{2}}\left( y_{2}\right)$ is the linear map
$$
L\left( z \right) = 2y - z.
$$
Geometrically, the balls $B_{\frac{1}{2}}\left( y_{1}\right)$ and $B_{\frac{1}{2}}\left( y_{2}\right)$ are symmetrical with respect to $y$. Then, if $\varepsilon > 0$ is sufficiently small, there will be two points $z_{1} \in B_{\frac{1}{2}}\left( y_{1}\right)$ and $z_{2} \in B_{\frac{1}{2}}\left( y_{2}\right)$ such that 
\begin{enumerate}
\item $y = \dfrac{z_{1}+z_{2}}{2}$;
\medskip
\item $\Gamma\left( z_{1} \right) \geq \Gamma\left( 0 \right) + \langle T\left( z_{1} \right), v \rangle - h$;
\medskip
\item $ \Gamma\left( z_{2} \right) \geq \Gamma\left( 0 \right) + \langle T\left( z_{2} \right), v \rangle - h$. 
\medskip
\end{enumerate} 
Hence, since $T$ and $\langle \cdot, v \rangle$ are linear maps and $\Gamma$ is a concave function, we obtain
$$
\Gamma\left( y \right) \geq \Gamma\left( 0 \right) + \langle T\left( y \right), v \rangle - h.
$$
\end{proof}

As in \cite{CLU} , we use Lemma \ref{cov. 2 lemma, SC} to prove the version of Lemma 8.4 in \cite{CS} for our problem.

\begin{lemma}
\label{cov. 2 lemma}
Let $r>0$ and $\Gamma$ be a concave function in $E_{r, 1}$. There exists $\varepsilon_{0} > 0$ such that if
$$
\left |E_{r,1} \setminus E_{r, \frac{1}{2}} \cap \left\lbrace y : \Gamma \left( y\right) < \Gamma \left( 0 \right) + \langle y, \nabla \Gamma \left(0 \right)\rangle - h \right\rbrace \right | \leq \varepsilon \left |  E_{r,1} \setminus E_{r, \frac{1}{2}} \right |,
$$
for $0< \varepsilon \leq \varepsilon_{0}$, then 
$$\Gamma \left(  y \right) \geq \Gamma \left( 0 \right) + \langle y, \nabla \Gamma \left( 0 \right)\rangle - h$$ 
in the whole set $E_{r, \frac{1}{2}}$.
\end{lemma}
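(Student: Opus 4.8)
The plan is to reduce the statement to Lemma~\ref{cov. 2 lemma, SC} by flattening the ellipse $E_{r,1}$ into the unit ball via the anisotropic scaling $T_{\beta,r}$. First I would set $\tilde{\Gamma}(z):=\Gamma(T_{\beta,r}z)$ for $z\in B_{1}$. Since $T_{\beta,r}$ is linear, $\tilde{\Gamma}$ is concave on $B_{1}$; and by item~(5) of Lemma~\ref{Fundamental Geometry} we have $T_{\beta,r}(B_{1})=E_{r,1}$ and $T_{\beta,r}(B_{1/2})=E_{r,1/2}$, so the ring $B_{1}\setminus B_{1/2}$ corresponds under the bijection $T_{\beta,r}$ exactly to $E_{r,1}\setminus E_{r,1/2}$.

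Next I would perform the change of variables $y=T_{\beta,r}z$ in the hypothesis. Because $T_{\beta,r}$ is a diagonal linear bijection with constant Jacobian $\det T_{\beta,r}=\prod_{i}r^{2/b_{i}}=r^{c}$, the factor $r^{c}$ appears on both sides of the measure inequality and cancels. Using $T_{\beta,r}(0)=0$, hence $\tilde{\Gamma}(0)=\Gamma(0)$, together with $\langle y,\nabla\Gamma(0)\rangle=\langle T_{\beta,r}z,\nabla\Gamma(0)\rangle$, the assumption becomes
\[
\bigl|(B_{1}\setminus B_{1/2})\cap\{z:\tilde{\Gamma}(z)<\tilde{\Gamma}(0)+\langle T_{\beta,r}z,\nabla\Gamma(0)\rangle-h\}\bigr|\le\varepsilon\,\bigl|B_{1}\setminus B_{1/2}\bigr|.
\]
This is precisely the hypothesis of Lemma~\ref{cov. 2 lemma, SC} with the concave function $\tilde{\Gamma}$, the linear map $T=T_{\beta,r}$, and the vector $v=\nabla\Gamma(0)$; I would define $\varepsilon_{0}$ to be the threshold supplied by that lemma. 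Its conclusion gives $\tilde{\Gamma}(z)\ge\tilde{\Gamma}(0)+\langle T_{\beta,r}z,\nabla\Gamma(0)\rangle-h$ for every $z\in B_{1/2}$. Scaling back, i.e.\ writing an arbitrary $y\in E_{r,1/2}$ as $y=T_{\beta,r}z$ with $z\in B_{1/2}$, yields $\Gamma(y)\ge\Gamma(0)+\langle y,\nabla\Gamma(0)\rangle-h$ on all of $E_{r,1/2}$, which is the desired statement.

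The only point that requires care is the bookkeeping in this change of variables: checking that $T_{\beta,r}$ maps $B_{1}\setminus B_{1/2}$ onto $E_{r,1}\setminus E_{r,1/2}$ (immediate from Lemma~\ref{Fundamental Geometry}(5) and bijectivity) and that the Jacobian $r^{c}$ drops out of the normalized measure inequality. There is no genuine analytic obstacle here — all of the substance sits in Lemma~\ref{cov. 2 lemma, SC}, and the anisotropy is absorbed cleanly by the linear change of coordinates.
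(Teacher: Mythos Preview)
Your proposal is correct and follows essentially the same approach as the paper: both arguments pull back via $T_{\beta,r}$ to define $\tilde{\Gamma}(z)=\Gamma(T_{\beta,r}z)$, observe that the Jacobian factor $r^{c}$ cancels from both sides of the measure inequality so that the hypothesis becomes exactly that of Lemma~\ref{cov. 2 lemma, SC} with $T=T_{\beta,r}$ and $v=\nabla\Gamma(0)$, and then transfer the conclusion back to $E_{r,1/2}$. Your write-up is in fact slightly more explicit than the paper's about the Jacobian cancellation and the identification of $\varepsilon_{0}$.
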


\begin{proof}
Consider 
$$
\mathcal{A}:= \left(  B_{1} \setminus B_{\frac{1}{2}}\right) \cap \left\lbrace y : \tilde{\Gamma} \left( y\right) < \tilde{\Gamma} \left( 0 \right) + \langle T_{\beta, r}\left( y \right), \nabla \Gamma \left( 0 \right) \rangle - h \right\rbrace 
$$
and
$$
\mathcal{D}:= E_{r,1} \setminus E_{r, \frac{1}{2}} \cap \left\lbrace y : \Gamma \left( y\right) < \Gamma \left( 0 \right) + \langle y, \nabla \Gamma \left(0 \right)\rangle - h \right\rbrace.
$$
Notice that
$$
 \mathcal{A} = T_{\beta, r}^{-1}\left( \mathcal{D} \right),  
$$
where $\tilde{\Gamma}\left( x \right) := \Gamma \left(  T_{\beta, r} \left( x \right) \right)$. Moreover,
$$
 B_{1} \setminus B_{\frac{1}{2}}  = T_{\beta, r}^{-1}\left( E_{r, 1} \setminus E_{r, \frac{1}{2}} \right) \quad \text{and} \quad   B_{\frac{1}{2}}  = T_{\beta, r}^{-1}\left( E_{r, \frac{1}{2}} \right). 
$$
Then, taking into account that $\tilde{\Gamma}$ is concave, the lemma follows from Lemma \ref{cov. 2 lemma, SC}.
\end{proof}

\begin{corollary}
\label{ABP cor 1}
Let $\varepsilon_{0} > 0$ be as in Lemma \ref{cov. 2 lemma}. Given $0 < \varepsilon \leq \varepsilon_{0}$, there exists a constant $C\left( n, \lambda, b_{\min}, b_{\max}, \varepsilon \right) > 0$ such that for any function $u$ satisfying the same hypothesis as in Lemma \ref{cov. 1 lemma}, there exist $r \in \left( 0, \rho_{0} 2^{-\frac{1}{q_{\min, s}} } \right)$ and $k= k\left( x \right)$ such that 
$$\left | E_{r, 1} \setminus E_{l r, \frac{1}{2}} \cap \left\lbrace y : u\left( x + y\right) < u\left( x \right) + \langle y, \nabla \Gamma \left( x \right)\rangle - C\left( \frac{q_{\min, s}}{q_{\max,s}}\right)  f\left( x \right)\sum \limits_{i=1}^{n} r^{\frac{4}{b_{i}}} \right\rbrace \right |   $$
\begin{equation} \label{ABP cor 1.1}
\leq \varepsilon_{} \left | E_{r, 1} \setminus E_{l r, 1} \right |
\end{equation}
and 
$$
\left | \nabla \Gamma \left( E_{r, \frac{1}{4} }\left( x \right) \right) \right | \leq C\left( \frac{q_{\min, s}}{q_{\max,s}}\right)^{n} f\left( x \right)^{n} \left | E_{r, \frac{1}{4} } \left( x \right) \right |,
$$
where $r = \rho_{0} 2^{-\frac{1}{q_{\min, s}} }2^{- \mathfrak{C}\left[ \frac{b_{\min}}{2} \right] k  }$ and $l =  2^{- \mathfrak{C}\left[ \frac{b_{\min}}{2}\right]}$.
\end{corollary}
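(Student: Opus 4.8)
The plan is to specialize Lemma~\ref{cov. 1 lemma} through an appropriate choice of $M$, transfer the measure bound it produces for $u$ to the concave envelope $\Gamma$, feed this into Lemma~\ref{cov. 2 lemma} to trap $\Gamma$ within a controlled distance of one of its supporting planes on a smaller ellipse, and then read off the bound for $|\nabla\Gamma(E_{r,1/4}(x))|$ from a rectangle enclosing this gradient image. The hypothesis $s<4/b_{\max}$ (equivalently $q_{\max,s}>0$, hence $q_{\min,s}>0$) is what keeps $q_{\min,s}/q_{\max,s}$ and the geometric series in Lemma~\ref{cov. 1 lemma} bounded, and it is also what makes the final anisotropic scaling close.

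Fix $x\in\{u=\Gamma\}$. The ratios $|E_{r_k,1}\setminus E_{r_{k+1},1}|/|E_{r_k,1}|$ and $|E_{r_k,1}\setminus E_{r_k,1/2}|/|E_{r_k,1}|$ are constants depending only on $n,b_{\min},b_{\max}$ (namely $1-l^{c}$ and $1-2^{-n}$, with $l=2^{-\mathfrak{C}(b_{\min}/2)}$), so one can choose $M=C_1 f(x)$ with $C_1=C_1(n,\lambda,b_{\min},b_{\max},\varepsilon)$ — that is, $M$ proportional to $f(x)/\varepsilon$ — so that, writing $r:=r_k$ for the index $k=k(x)$ furnished by Lemma~\ref{cov. 1 lemma}, one has simultaneously $|W_k(x)|\le\varepsilon\,|E_{r,1}\setminus E_{lr,1}|$ and $|W_k(x)|\le\varepsilon_0\,|E_{r,1}(x)\setminus E_{r,1/2}(x)|$. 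Since $r<1$ and $4/b_{\min}$ is the largest of the exponents $4/b_i$, we have $r^{4/b_{\min}}\le\sum_i r^{4/b_i}$; hence, for $C\ge C_1$ large enough and using $E_{r_{k+1},1}=E_{lr,1}$ together with the ellipse inclusions of Lemma~\ref{Fundamental Geometry}, the set in \eqref{ABP cor 1.1} is, up to those inclusions, contained in $W_k(x)$, which gives \eqref{ABP cor 1.1}.

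Now set $h:=M\big(q_{\min,s}/q_{\max,s}\big)r^{4/b_{\min}}$, the vertical shift appearing in $W_k(x)$. Because $u\le\Gamma$, $u(x)=\Gamma(x)$, and $\Gamma$ is concave — so $\Gamma(z)\le\Gamma(x)+\langle z-x,\nabla\Gamma(x)\rangle$ for all $z$, while $E_{lr,1}\subset E_{r,1/2}$ — the set where $\Gamma$ falls more than $h$ below its supporting plane at $x$, intersected with $E_{r,1}(x)\setminus E_{r,1/2}(x)$, is contained in $W_k(x)$; therefore
\[
\Big|\big(E_{r,1}(x)\setminus E_{r,1/2}(x)\big)\cap\{z:\Gamma(z)<\Gamma(x)+\langle z-x,\nabla\Gamma(x)\rangle-h\}\Big|\le\varepsilon_0\,\big|E_{r,1}(x)\setminus E_{r,1/2}(x)\big|.
\]
Translating $x$ to the origin, Lemma~\ref{cov. 2 lemma} (at scale $r$, with $\varepsilon_0$) yields $\Gamma(z)\ge\Gamma(x)+\langle z-x,\nabla\Gamma(x)\rangle-h$ for every $z\in E_{r,1/2}(x)$. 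Let $p=\nabla\Gamma(\bar x)$ with $\bar x\in E_{r,1/4}(x)$. For any $w\in E_{r,1/4}$, convexity of the ellipses gives $\bar x+w\in E_{r,1/2}(x)$, and chaining the supporting-plane inequality of $\Gamma$ at $\bar x$, namely $\Gamma(\bar x+w)\le\Gamma(\bar x)+\langle w,p\rangle$, the lower bound just obtained at $z=\bar x+w$, and the supporting-plane inequality at $x$ evaluated at $\bar x$, we obtain $\langle w,p-\nabla\Gamma(x)\rangle\ge-h$; by symmetry of $E_{r,1/4}$ this forces $|\langle w,p-\nabla\Gamma(x)\rangle|\le h$ for all $w\in E_{r,1/4}$, and $w=\pm\tfrac14 r^{2/b_i}e_i$ gives $|p_i-(\nabla\Gamma(x))_i|\le 4h\,r^{-2/b_i}$. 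Hence $\nabla\Gamma(E_{r,1/4}(x))$ lies inside a rectangle of volume $\prod_i 8h\,r^{-2/b_i}=8^n h^n r^{-c}$. Inserting $h=M\big(q_{\min,s}/q_{\max,s}\big)r^{4/b_{\min}}$ with $M\le C_1 f(x)$, and using that $|E_{r,1/4}(x)|$ is a fixed dimensional multiple of $r^{c}$, the asserted bound reduces to $r^{\,4n/b_{\min}-2c}\le C'$, which holds because $r<1$ and $4n/b_{\min}-2c=4\big(n/b_{\min}-\sum_i 1/b_i\big)\ge0$; absorbing the constants into $C=C(n,\lambda,b_{\min},b_{\max},\varepsilon)$ finishes the proof.

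The genuinely delicate point is this last scaling balance: the rectangle enclosing $\nabla\Gamma(E_{r,1/4}(x))$ has side $\sim h\,r^{-2/b_i}$ in direction $e_i$, hence volume $\sim h^n r^{-c}$, and one must ensure this is dominated by $|E_{r,1/4}(x)|\sim r^{c}$ after inserting $h\sim f(x)\,r^{4/b_{\min}}$. This works precisely because the exponent occurring in $h$ is $4/b_{\min}$ — forced by the form of $W_k$ in Lemma~\ref{cov. 1 lemma} — and it is the anisotropic analogue of the elementary "$r^{2}$ versus $r^{n}$" balancing behind the classical ABP estimate in \cite{CS}. The remaining work, namely tracking how $M$, hence $h$, depends on $\varepsilon$, $f(x)$ and the fixed volume ratios through Lemmas~\ref{cov. 1 lemma} and~\ref{cov. 2 lemma}, is routine bookkeeping.
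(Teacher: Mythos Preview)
Your argument is correct and follows the same route as the paper: choose $M$ proportional to $f(x)/\varepsilon$ in Lemma~\ref{cov. 1 lemma}, pass the resulting measure bound from $u$ to $\Gamma$ via $u\le\Gamma$ and $u(x)=\Gamma(x)$, apply Lemma~\ref{cov. 2 lemma}, and close the anisotropic scaling using $r<1$ together with $n/b_{\min}\ge\sum_i 1/b_i$. The only notable difference is cosmetic: to control $\nabla\Gamma$ on $E_{r,1/4}(x)$ the paper introduces $F(y)=\Gamma(x+y)-\Gamma(x)-\langle y,\nabla\Gamma(x)\rangle+h$, observes $0\le F\le 2h$ on $E_{r,1/2}$, and invokes the Lipschitz bound $|\nabla F|\le \|F\|_\infty/\mathrm{dist}(\partial E_{r,1/2},E_{r,1/4})\sim h\,r^{-2/b_{\min}}$ to enclose $\nabla\Gamma(E_{r,1/4}(x))$ in a Euclidean ball, whereas you test against $w=\pm\tfrac14 r^{2/b_i}e_i$ and obtain an anisotropic rectangle with sides $\sim h\,r^{-2/b_i}$. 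Your rectangle is in fact sharper direction by direction (it sits inside the paper's ball), but both estimates lead to the same exponent balance $r^{2n/b_{\min}}\lesssim r^{c}$ and hence to the stated bound.
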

\begin{proof}
Taking $M=\frac{C_{0}}{\varepsilon  C^{-1}_{1} }f\left( x \right)$ in Lemma \ref{cov. 1 lemma}, we obtain \eqref{ABP cor 1.1} with $C_{2}:= \frac{C_{0}}{\varepsilon C^{-1}_{1} }$,
where
$$
C_{1} := \dfrac{\left | B_{1} \right | }{\left | B_{1} \setminus B_{1/2} \right | } > 1.
$$
Consider the sets
$$
W_{1, r} :=E_{ r, 1} \setminus E_{r, \frac{1}{2}} \cap \left\lbrace y : \Gamma\left( x + y\right) < u\left( x \right) + \langle y, \nabla \Gamma \left( x \right)\rangle - C_{2}\left( \frac{q_{\min, s}}{q_{\max,s}}\right) f\left( x\right) r^{\frac{4}{b_{\min}}} \right\rbrace   $$
and
$$W_{2, r}\left( x \right) := E_{r, 1} \setminus E_{ lr, 1} \cap \left\lbrace y : u\left( x + y\right) < u\left( x \right) + \langle y, \nabla \Gamma \left( x \right)\rangle - C_{2}\left( \frac{q_{\min, s}}{q_{\max,s}}\right) f\left( x\right) r^{\frac{4}{b_{\min}}} \right\rbrace.$$

Then, since 
$$ E_{ r, 1} \setminus E_{r, \frac{1}{2}}  \subset E_{r, 1} \setminus E_{ lr, 1}, \quad u\left( x \right) = \Gamma\left( x \right), \quad \text{and} \quad u\left( x + y\right) \leq \Gamma \left( x + y\right),$$
for $y \in E_{r, 1}$, we have $W_{1,r} \subset W_{2, r} \subset W_{r}\left( x \right)$. Thus, from  \eqref{ABP cor 1.1} we obtain
\begin{eqnarray}
\left | W_{1,r}\left( x \right) \right |  \leq   \left | W_{2, r}\left( x \right) \right |  \leq  \frac{\varepsilon}{C_{1}} \left | E_{r, 1} \setminus E_{l r, 1} \right |.
\end{eqnarray}
Moreover, we estimate
\begin{eqnarray}
\frac{\varepsilon}{C_{1}} \left | E_{r, 1} \setminus E_{l r, 1} \right | & = & \frac{\varepsilon}{C_{1}} r^{c} \dfrac{\left | B_{1} \setminus E_{l, 1} \right | }{\left | B_{1} \setminus B_{1/2} \right | } \left | B_{1} \setminus B_{\frac{1}{2}} \right | \\  \nonumber
& \leq &  \frac{\varepsilon}{C_{1}} r^{c} C_{1} \left | B_{1} \setminus B_{1/2} \right | \\ \nonumber
& \leq & \varepsilon_{0} \left | E_{r, 1} \setminus E_{r, 1/2} \right |.
\end{eqnarray}

Then, from Lemma \ref{cov. 2 lemma} and the concavity of $\Gamma$, we find
$$
 0 \leq F\left( y \right) \leq  2C_{2}\left( \frac{q_{\min, s}}{q_{\max,s}}\right)f\left( x \right)r^{\frac{4}{b_{\min}}} \quad \ \text{in} \ E_{r, \frac{1}{2}}, 
$$
where 
$$F\left( y \right):= \Gamma \left( x + y \right) - \Gamma \left( x \right) - \langle y,  \nabla \Gamma \left( x \right) \rangle + C_{2}\left( \frac{q_{\min, s}}{q_{\max,s}}\right)f\left( x \right)r^{\frac{4}{b_{\min}}}.$$ 
Notice that
$$
\nabla F\left( x + y \right) = \nabla \Gamma\left( x + y \right) - \nabla \Gamma \left( x \right) .
$$
Then, since $F$ is concave, we find
\begin{eqnarray*}
\left | \nabla \Gamma\left( x + y \right) - \nabla \Gamma \left( x \right) \right | & \leq & \dfrac{\Vert F \Vert_{L^{\infty}\left(E_{r, \frac{1}{2}} \right)}}{\dist \left( \partial E_{r, \frac{1}{2}} , E_{ r, \frac{1}{4}}\right)} \\ 
& \leq & \dfrac{C_{2} f\left( x\right) \left( \frac{q_{\min, s}}{q_{\max,s}}\right)r^{\frac{4}{b_{\min}}}}{\dist \left( \partial E_{r, \frac{1}{2}} , E_{ r, \frac{1}{4}}\right)}  \\ 
& \leq & C_{3}\left( \frac{q_{\min, s}}{q_{\max,s}}\right)f\left( x \right)r^{\frac{2}{b_{\min}}}.
\end{eqnarray*}
Thus, we have
$$
\nabla \Gamma \left( E_{ r, \frac{1}{4}} \right) \subset B_{C_{3}\left( \frac{q_{\min, s}}{q_{\max,s}}\right)f\left( x \right)r^{\frac{2}{b_{\min}}}}\left( \nabla \Gamma \left( x \right) \right)
$$
and obtain
$$
 \left | \nabla \Gamma \left( E_{r, \frac{1}{4}} \right) \right | \leq C_{4}\left( \frac{q_{\min, s}}{q_{\max,s}}\right)^{n}f\left( x \right)^{n} \left | E_{r, \frac{1}{4}}  \right |.    
$$
Finally, taking $C=\max \left\lbrace C_{2}, C_{4} \right\rbrace $, the lemma is proven.
\end{proof}
 
The following covering lemma is a fundamental tool in our analysis.

\begin{lemma}[Covering Lemma, {\cite[Lemma 3]{CCal}}]
\label{covering lemma}
Let $S$ be a bounded subset of $\mathbb{R}^{n}$ such that for each $x \in S$ there exists an $n$-dimensional rectangle $\mathcal{R}\left( x \right)$, centered at $x$, such that:
\begin{itemize}
\item the edges of $\mathcal{R}\left( x \right)$ are parallel to the coordinate axes;
\medskip
\item the length of the edge of $\mathcal{R}\left( x \right)$ corresponding to the $i$-th axis is given by $h_{i}\left(t \right)$, where $t=t\left( x \right)$, $h_{i}\left( t \right)$ is an increasing function of the parameter $t\geq 0$, continuous at $t=0$, and $h_{i}\left( 0 \right)=0$. 
\end{itemize}
Then there exist points $\left\lbrace x_{k} \right\rbrace$ in $S$ such that 
\begin{enumerate}
\item $S \subset \bigcup_{k=1}^{\infty} \mathcal{R}\left( x_{k}\right)$;
\medskip
\item each $x \in S$ belongs to at most $C=C\left( n \right) >0$ different rectangles. 
\end{enumerate}
\end{lemma}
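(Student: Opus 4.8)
The plan is to prove this as a Besicovitch‑type covering theorem, in the spirit of \cite[Lemma 3]{CCal}. The one structural fact that makes the classical scheme work in the anisotropic setting is that the hypotheses force the family $\{\mathcal{R}(x)\}_{x\in S}$ to be \emph{totally ordered by inclusion up to translation}: since every $h_i$ is increasing, $t(x)\le t(y)$ implies $h_i(t(x))\le h_i(t(y))$ for each $i$, so $\mathcal{R}(x)$ is, coordinate by coordinate, no larger than $\mathcal{R}(y)$. Without loss of generality we assume $t(x)>0$ for all $x\in S$, so that each $\mathcal{R}(x)$ is a genuine open rectangle containing $x$.

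\textbf{Greedy selection and the covering property.} I would choose the $x_k$ inductively, always taking the largest available rectangle. Set $S_1:=S$ and, having chosen $x_1,\dots,x_{j-1}$, put $S_j:=S\setminus\bigcup_{i<j}\mathcal{R}(x_i)$; if $S_j=\varnothing$ the process stops, otherwise pick $x_j\in S_j$ realizing $a_j:=\sup\{t(x):x\in S_j\}$ (if the supremum is not attained one selects $t(x_j)$ within a rapidly vanishing error and tidies up the resulting estimates by a routine limiting argument). Since $S_j\subset S_k$ for $j\ge k$, the numbers $a_j$ are non‑increasing, and with the exact‑maximum choice $t(x_1)\ge t(x_2)\ge\cdots$; hence $h_i(t(x_j))\le h_i(t(x_k))$ for every coordinate $i$ whenever $j>k$. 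The next point is that the shrunk rectangles $\tfrac13\mathcal{R}(x_j)$ (same centres, side lengths divided by $3$) are pairwise disjoint: if $z\in\tfrac13\mathcal{R}(x_j)\cap\tfrac13\mathcal{R}(x_k)$ with $k<j$, then for every $i$ one gets $|x_{j,i}-x_{k,i}|<\tfrac16 h_i(t(x_k))+\tfrac16 h_i(t(x_j))\le\tfrac13 h_i(t(x_k))$, which contradicts $x_j\notin\mathcal{R}(x_k)$ (the latter gives $|x_{j,i_0}-x_{k,i_0}|\ge\tfrac12 h_{i_0}(t(x_k))$ for some $i_0$). Because these shrunk rectangles all lie in a fixed bounded neighbourhood of $S$, the series $\sum_j\prod_i h_i(t(x_j))$ of their volumes converges; hence $\prod_i h_i(t(x_j))\to0$, and since each $h_i$ is increasing with $h_i(0)=0$ this forces $t(x_j)\to0$, so that $a_j=t(x_j)\to0$ as well. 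Consequently $S\subset\bigcup_j\mathcal{R}(x_j)$: a point $x\in S$ missed by every $\mathcal{R}(x_k)$ would lie in every $S_j$ (if the process stopped, $x$ would already be covered), forcing $t(x)\le a_j\to0$ and hence $t(x)=0$, contrary to our normalization. This establishes conclusion (1).

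\textbf{Bounded overlap.} This is the heart of the statement and the step I expect to be the main obstacle. Fix $p\in\mathbb{R}^n$ and set $J:=\{k:p\in\mathcal{R}(x_k)\}$; I must bound $\#J$ by a constant $C(n)$. If $k<j$ are both in $J$, then $p\in\mathcal{R}(x_k)\cap\mathcal{R}(x_j)$ together with $t(x_k)\ge t(x_j)$ yields $|x_{j,i}-x_{k,i}|<h_i(t(x_k))$ for every $i$, so $x_j\in 2\mathcal{R}(x_k)$ while, by construction, $x_j\notin\mathcal{R}(x_k)$; equivalently, in the coordinates rescaled by the $h_i(t(x_k))$, the centre $x_j$ sits in the cube--annulus $[-1,1]^n\setminus(-\tfrac12,\tfrac12)^n$ around $x_k$. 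Combining this separation of rescaled centres with the disjointness of the shrunk rectangles and the total ordering of the family, one runs the classical Besicovitch packing argument — counting how many rescaled centres can be placed in such an annulus without the corresponding shrunk rectangles overlapping — to obtain $\#J\le C(n)$. The delicate point, and the reason this cannot be reduced verbatim to the Euclidean‑ball case, is that the ratios $h_i(t)/h_i(t')$ are not controlled (the $h_i$ need not be doubling, in either direction), so the geometric separation must be harvested one coordinate at a time; carrying this bookkeeping through is precisely the content of \cite[Lemma 3]{CCal}, to which I would defer for the remaining combinatorial details.
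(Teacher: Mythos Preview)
The paper does not prove this lemma at all: it is quoted verbatim as \cite[Lemma~3]{CCal} and used as a black box, so there is no in-paper argument to compare your proposal against.

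Your greedy-selection argument for conclusion~(1) is essentially sound and already goes further than the paper does. One small wrinkle: from $\prod_i h_i(t(x_j))\to 0$ you cannot quite deduce $t(x_j)\to 0$ under the bare hypotheses (nothing prevents some $h_i$ from vanishing on an interval $[0,\tau]$), but you do not actually need that intermediate step --- for an uncovered $x\in S$ you have $h_i(t(x))\le h_i(t(x_j))$ for every $i$ and $j$, hence $|\mathcal R(x)|\le |\mathcal R(x_j)|\to 0$, contradicting the nondegeneracy of $\mathcal R(x)$ directly. The ``sup not attained'' issue you flag is also genuine here, since the $h_i$ are merely increasing with no doubling, so a casual ``take $t(x_j)>\tfrac12 a_j$'' does not by itself yield $h_i(t(x_j))\le h_i(t(x_k))$ for $k<j$; this is fixable in standard ways, as you indicate.

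For conclusion~(2), the bounded-overlap count, you set up the rescaled cube-annulus geometry correctly and then explicitly defer the combinatorial core to \cite{CCal}. That is exactly what the paper does. So in the end your proposal and the paper are aligned: the substantive content of the lemma lives in the Caffarelli--Calder\'on reference, and neither you nor the paper reproduces it.
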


The Corollary \ref{ABP cor 1} and the Covering Lemma \ref{covering lemma} allow us to obtain a lower bound on the volume of the union of the level sets $E_{r, 1}$ where $\Gamma$ and $u$ detach quadratically from the corresponding tangent planes to $\Gamma$ by the volume of the image of the gradient map, as in the standard ABP estimate.
\begin{corollary}
\label{ABP cor 2}
For each $x \in \Sigma = \left\lbrace  u = \Gamma \right\rbrace \cap B_{1}$, let $E_{r, 1}\left( x \right)$ be the level set obtained in Corollary \ref{ABP cor 1}. Then, we have
$$
C\left( \sup \limits u \right)^{n} \leq \left | \bigcup \limits_{x \in \Sigma} E_{r, 1} \left( x \right)\right |.
$$
\end{corollary}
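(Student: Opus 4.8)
The plan is the standard route to an ABP-type inequality, now carried out in the anisotropic geometry: combine a lower bound for the measure of the image of $\nabla\Gamma$ on the contact set $\Sigma:=\{u=\Gamma\}\cap B_1$, the pointwise gradient estimate of Corollary~\ref{ABP cor 1}, and the Covering Lemma~\ref{covering lemma}, exactly as in \cite{CS,CLU}. We may assume $\sup u>0$, otherwise there is nothing to prove.

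\emph{Step 1 (lower bound for $|\nabla\Gamma(\Sigma)|$).} This part does not see the anisotropy and is classical. Since $u\le0$ outside $B_1$, the positive part $u^+$ is supported in $B_1\Subset B_3$ and $\Gamma$ is its concave envelope in $B_3$; consequently $\Gamma$ vanishes near $\partial B_3$. Hence, for every $\xi\in\mathbb R^n$ with $|\xi|$ small compared with $\sup u$, the map $y\mapsto\Gamma(y)-\langle \xi,y\rangle$ attains its maximum over $\overline{B_3}$ at an interior point, the supporting plane of $\Gamma$ there touches $u^+$ at some $\bar z\in B_1$ with $u(\bar z)=\Gamma(\bar z)$, and $\xi$ is a supergradient of $\Gamma$ at $\bar z\in\Sigma$. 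Since $\Gamma$ is concave (so $\nabla\Gamma$ is defined a.e.), this yields $B_{c_n\sup u}\subset\nabla\Gamma(\Sigma)$ up to a null set, whence
\[
|\nabla\Gamma(\Sigma)|\ge c_n(\sup u)^n .
\]

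\emph{Step 2 (covering).} For each $x\in\Sigma$ let $r=r(x)>0$ and the level set $E_{r,1}(x)$ be those produced by Corollary~\ref{ABP cor 1}. By Lemma~\ref{Fundamental Geometry}, the ellipse $E_{r,1/4}(x)$ is comparable, with purely dimensional constants, to the axis-parallel rectangle centered at $x$ with $i$-th half-edge $\sim r^{2/b_i}$; let $\mathcal R(x)\subset E_{r,1/4}(x)$ be the inscribed such rectangle, whose edge lengths $h_i(r)=c_n r^{2/b_i}$ are increasing in the single parameter $r\ge0$, continuous at $0$ and with $h_i(0)=0$. Apply the Covering Lemma~\ref{covering lemma} to $\{\mathcal R(x)\}_{x\in\Sigma}$ to obtain a countable subfamily $\{\mathcal R(x_k)\}_k$, with $r_k:=r(x_k)$, such that $\Sigma\subset\bigcup_k\mathcal R(x_k)$ and with bounded overlapping; by the comparability above the sets $E_{r_k,1/4}(x_k)$ then also have bounded overlapping, so $\sum_k|E_{r_k,1/4}(x_k)|\le C(n)\,\bigl|\bigcup_k E_{r_k,1/4}(x_k)\bigr|$.

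\emph{Step 3 (chaining the estimates).} By subadditivity and $\mathcal R(x_k)\subset E_{r_k,1/4}(x_k)$,
\[
c_n(\sup u)^n\le|\nabla\Gamma(\Sigma)|\le\sum_k|\nabla\Gamma(\mathcal R(x_k))|\le\sum_k|\nabla\Gamma(E_{r_k,1/4}(x_k))| .
\]
Inserting the gradient bound of Corollary~\ref{ABP cor 1}, $|\nabla\Gamma(E_{r_k,1/4}(x_k))|\le C\,(q_{\min,s}/q_{\max,s})^n f(x_k)^n|E_{r_k,1/4}(x_k)|\le C\|f\|_{L^\infty}^n|E_{r_k,1/4}(x_k)|$, then Step~2, and finally $E_{r_k,1/4}(x_k)\subset E_{r_k,1}(x_k)$ with $x_k\in\Sigma$, we get
\[
c_n(\sup u)^n\le C\|f\|_{L^\infty}^n\,\bigl|\bigcup_k E_{r_k,1/4}(x_k)\bigr|\le C\|f\|_{L^\infty}^n\,\Bigl|\bigcup_{x\in\Sigma}E_{r,1}(x)\Bigr| ,
\]
which is the claim, with $C=C(n,\lambda,b_{\min},b_{\max},s,\|f\|_{L^\infty})$. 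The main obstacle is Step~2: one must carefully check, via Lemma~\ref{Fundamental Geometry}, that $E_{r,1}(x)$, $E_{r,1/4}(x)$ and $\mathcal R(x)$ are mutually comparable with dimensional constants, so that $\{\mathcal R(x)\}_{x\in\Sigma}$ is an admissible family for the Covering Lemma and the bounded-overlap property (hence $\sum_k|E_{r_k,1/4}(x_k)|\le C|\bigcup_k E_{r_k,1/4}(x_k)|$) is inherited by the ellipses; the rest is routine ABP bookkeeping.
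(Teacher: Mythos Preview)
Your proof is correct and follows precisely the route the paper indicates (the paper does not give a detailed argument, only the remark that the result follows from Corollary~\ref{ABP cor 1} together with the Covering Lemma~\ref{covering lemma}, ``as in the standard ABP estimate''). One small wording issue: the rectangles $\mathcal{R}(x_k)$ having bounded overlap does not literally imply the ellipses $E_{r_k,1/4}(x_k)$ do, but the inequality you actually need and state, $\sum_k|E_{r_k,1/4}(x_k)|\le C\bigl|\bigcup_k E_{r_k,1/4}(x_k)\bigr|$, does follow from $\mathcal R(x_k)\subset E_{r_k,1/4}(x_k)$ and $|E_{r_k,1/4}(x_k)|\le C|\mathcal R(x_k)|$, so the argument is sound.
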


The nonlocal anisotropic version of the ABP estimate now reads as follows.

\begin{theorem}
\label{ABP Nonlocal theorem}
Let $u$ and $\Gamma$ be as in Lemma \ref{cov. 1 lemma}. There is a finite family of open rectangles $\left\lbrace \mathcal{R}_{j}\right\rbrace_{j \in \left\lbrace 1, \dots , m \right\rbrace }$ with diameters $d_{j}$ such that the following hold:
\begin{enumerate}

\item Any two rectangles $\mathcal{R}_{i}$ and $\mathcal{R}_{j}$ in the family do not intersect.

\medskip

\item $\left\lbrace  u = \Gamma \right\rbrace \subset \bigcup_{j=1}^{m} \overline{\mathcal{R}}_{j} $.

\medskip

\item $\left\lbrace  u = \Gamma \right\rbrace \cap \overline{\mathcal{R}}_{j} \neq \emptyset $ for any $\mathcal{R}_{j}$.

\medskip

\item $d_{j} \leq  \sqrt{\sum \limits_{i=1}^{n}\left( \rho_{0} 2^{-\frac{1}{q_{\min, s}}}\right)^{\frac{4}{b_{i}}} }$.

\medskip

\item $ \left | \nabla \Gamma \left( \overline{\mathcal{R}}_{j} \right) \right | \leq C \left( \max_{\tilde{\mathcal{R}}_{j}} f^{+} \right)^{n} \left | \tilde{\mathcal{R}}_{j} \right |$.

\medskip

\item $ \left | \left\lbrace y \in C\tilde{\mathcal{R}}_{j} : u\left( y \right) \geq \Gamma \left( y \right) - C\left( \max_{\tilde{\mathcal{R}}_{j}} f \right) \left( \tilde{d}_{j}\right)^{2} \right\rbrace \right | \geq \varsigma \left | \tilde{\mathcal{R}}_{j} \right |$,

\end{enumerate} 
where $\tilde{d}_{j}$ is the diameter of the rectangle $\tilde{\mathcal{R}}_{j}$ corresponding to $\mathcal{R}_{j}$. The constants $\varsigma > 0$ and $C >0$ depend only on $n$, $\lambda$, $\Lambda$, $b_{\min}$, $b_{\max}$, and $s$.
\end{theorem}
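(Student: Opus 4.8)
The plan is to follow the dyadic stopping-time scheme of \cite[Theorem~8.7]{CS}, replacing the dyadic cubes by the $n$-dimensional rectangles adapted to the anisotropic scaling $T_{\beta,r}$, exactly as was done for a specific family of kernels in \cite{CLU}. The starting point is that for every contact point $x\in\{u=\Gamma\}$, Corollary~\ref{ABP cor 1} produces a \emph{good scale} $r=r(x)=\rho_{0}2^{-1/q_{\min,s}}2^{-\mathfrak{C}(b_{\min}/2)k(x)}$ at which $\Gamma$ detaches from its tangent plane at most quadratically on $E_{r,1/2}(x)$, at which $u$ detaches at most quadratically from $\Gamma$ in measure (estimate \eqref{ABP cor 1.1}), and at which $|\nabla\Gamma(E_{r,1/4}(x))|\le C(q_{\min,s}/q_{\max,s})^{n}f(x)^{n}|E_{r,1/4}(x)|$. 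The existence of such a finite $k(x)$ for each $x$ is precisely what will make the stopping time terminate.

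First I would fix the anisotropic ``dyadic'' grid: a Calder\'on--Zygmund family of rectangles whose side in the $i$-th direction at generation $k$ is comparable to $(\rho_{0}2^{-1/q_{\min,s}}2^{-\mathfrak{C}(b_{\min}/2)k})^{2/b_{i}}$, so that a generation-$k$ rectangle is subdivided into finitely many generation-$(k+1)$ rectangles with controlled overlap; this is the decomposition generated by $T_{\beta,r}$ alluded to in the introduction, and the Covering Lemma~\ref{covering lemma} (applied with $h_{i}(t)=c\,t^{2/b_{i}}$) together with the Lebesgue differentiation theorem for Caffarelli--Calder\'on rectangles provides the required structure. Choosing $\rho_{0}=\rho_{0}(n)$ so that $\{u=\Gamma\}\subset\overline{B_{1}}$ lies in one generation-$0$ rectangle $\mathcal{R}_{0}$ (or a finite union of them, over which one sums the conclusions), I would then run the stopping time starting from $\mathcal{R}_{0}$: a generation-$(k+1)$ rectangle $\mathcal{R}$ is \emph{selected} and not subdivided further whenever $\overline{\mathcal{R}}\cap\{u=\Gamma\}\neq\emptyset$ and its predecessor $\tilde{\mathcal{R}}$ satisfies the two measure-type estimates of Corollary~\ref{ABP cor 1} relative to some contact point $x\in\overline{\mathcal{R}}\subset\tilde{\mathcal{R}}$ with $r=r_{k}$; rectangles not meeting $\{u=\Gamma\}$ are discarded. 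Since for each $x$ Corollary~\ref{ABP cor 1} supplies a generation $k(x)$ at which the selection criterion holds, the branch through $x$ is never subdivided past generation $k(x)$, and a compactness argument on the closed set $\{u=\Gamma\}$ upgrades this to a uniform bound, yielding the \emph{finite} family $\{\mathcal{R}_{j}\}$.

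The remaining work is to read off the six conclusions. Disjointness (1) is automatic since selected rectangles form an antichain in the grid; (3) holds by construction; (2) holds because every $x\in\{u=\Gamma\}$ lies in a selected rectangle (its branch stops at generation $\le k(x)$); and (4) is immediate, since every selected rectangle sits inside a generation-$0$ rectangle, whose diameter is $\sqrt{\sum_{i}(\rho_{0}2^{-1/q_{\min,s}})^{4/b_{i}}}$. Properties (5) and (6) are obtained by transplanting the two estimates of Corollary~\ref{ABP cor 1} from the ellipsoids $E_{r,1/4}(x)$, $E_{r,1}(x)$ to the rectangles $\tilde{\mathcal{R}}_{j}$, $C\tilde{\mathcal{R}}_{j}$ using the inclusions of Lemma~\ref{Fundamental Geometry} among $E_{r,l}$, $\Theta_{r}$, $B_{r}$ and $n$-dimensional rectangles (so that, after enlarging $\tilde{\mathcal{R}}_{j}$ by a fixed number of generations if necessary, $\overline{\mathcal{R}}_{j}\subset E_{r,1/4}(x)$ and $E_{r,1}(x)\subset C\tilde{\mathcal{R}}_{j}$), together with $|E_{r,1/4}(x)|\sim|\tilde{\mathcal{R}}_{j}|\sim r^{c}$, $\sum_{i}r^{4/b_{i}}\sim\tilde d_{j}^{\,2}$, $f(x)\le\max_{\tilde{\mathcal{R}}_{j}}f^{+}$, and the fact that $q_{\min,s}/q_{\max,s}$ is bounded for $s\in(0,4/b_{\max})$; for (6) one also uses that by concavity of $\Gamma$ and $u(x)=\Gamma(x)$ the set where $u(x+y)\ge u(x)+\langle y,\nabla\Gamma(x)\rangle-h$ is contained in the set where $u(x+y)\ge\Gamma(x+y)-h$, after which one translates by $x$. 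I expect the main obstacle to be the construction and bookkeeping of the anisotropic Calder\'on--Zygmund grid: because the side ratios $2^{2/b_{i}}$ between consecutive generations are in general irrational, one cannot literally bisect, and one must work with the rectangles of \cite{CCal} and choose $\mathfrak{C}$ large enough that the slack is absorbed by the inclusions of Lemma~\ref{Fundamental Geometry}; tracking the resulting constants so that $\tilde{\mathcal{R}}_{j}$ and its dilate $C\tilde{\mathcal{R}}_{j}$ correctly sandwich the ellipsoids coming from Corollary~\ref{ABP cor 1} is the delicate point, while the finiteness of the family and the passage between measure estimates on ellipsoids and on rectangles are comparatively routine.
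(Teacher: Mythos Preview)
Your proposal is correct and follows essentially the same route as the paper: an anisotropic dyadic splitting scheme in the spirit of \cite[Theorem~8.7]{CS} and \cite{CLU}, with Corollary~\ref{ABP cor 1} supplying the good scale at each contact point and Lemma~\ref{Fundamental Geometry} handling the passage between ellipsoids and rectangles for (5) and (6). Two minor remarks. First, the paper's grid is simpler than you fear: it tiles $B_{1}$ by rectangles with edges $(\rho_{0}2^{-1/q_{\min,s}})^{2/b_{i}}/2^{\mathfrak{C}}$ and at each step splits every edge by the \emph{same} integer factor $2^{\mathfrak{C}}$, so the irrational-ratio worry does not arise at the level of the grid itself; the anisotropic exponents enter only when one passes from $\mathcal{R}_{j}$ to the associated $\tilde{\mathcal{R}}_{j}$. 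Second, the paper argues termination by contradiction rather than by a compactness bound on $k(x)$: if the process were infinite, finite branching forces an infinite nested chain of rectangles converging to some $x_{0}\in\{u=\Gamma\}$, and Corollary~\ref{ABP cor 1} at $x_{0}$ shows the rectangle at generation $k_{0}(x_{0})$ already satisfies (5) and (6), a contradiction. This is really the same mechanism as yours once one notes that ``every branch terminates'' plus finite branching gives a finite tree, but it sidesteps the need to produce any uniform control on $k(x)$.
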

\begin{proof}
We cover the ball $B_{1}$ with a tiling of rectangles of edges 
$$\dfrac{\left( \rho_{0} 2^{-\frac{1}{q_{\min, s}}}\right)^{\frac{2}{b_{i}}}}{2^{\mathfrak{C}}}.$$
We discard all those that do not intersect $\left\lbrace  u = \Gamma \right\rbrace$. Whenever a rectangle does not satisfy (5) and (6), we split its edges by $2^{n\mathfrak{C}}$ and discard those whose closure does not intersect $\left\lbrace  u = \Gamma \right\rbrace$. Now we prove that all remaining rectangles satisfy (5) and (6) and that this process stops after a finite number of steps.

As in \cite{CLU} we will argue by contradiction. Suppose the process is infinite. Then, there is a sequence of nested rectangles $\mathcal{R}_{j}$ such that the intersection of their closures will be a point $x_{0}$. Moreover, since 
$$
\left\lbrace  u = \Gamma \right\rbrace \cap \overline{\mathcal{R}}_{j} \neq \emptyset 
$$ 
and $\left\lbrace  u = \Gamma \right\rbrace$ is closed, we have $x_{0} \in \left\lbrace  u = \Gamma \right\rbrace$. Let $0 < \varepsilon_{1} < \varepsilon_{0}$, where $\varepsilon_{0}$ is as in Corollary \ref{ABP cor 1}. Thus, there exist 
$$r \in \left( 0, \rho_{0} 2^{-\frac{1}{q_{\min, s}} } \right)$$ 
and $k_{0}= k_{0}\left( x_{0} \right)$ such that 
$$\left | E_{r} \setminus E_{rl, 1} \cap \left\lbrace y : u\left( x + y\right) < u\left( x \right) + \langle y, \nabla \Gamma \left( x \right)\rangle - C f\left( x \right)\sum \limits_{i=1}^{n} r^{\frac{4}{b_{i}}} \right\rbrace \right | 
$$
\begin{equation}\label{meas est diam}
\leq \varepsilon_{1}  \left | E_{r, 1} \setminus E_{lr, 1} \right |
\end{equation}
and 
\begin{eqnarray}
\label{meas est diam 2} 
\left | \nabla \Gamma \left( E_{r, 1/4} \left( x_{0} \right) \right) \right | \leq C f\left( x_{0} \right)^{n} \left |  E_{r, 1/4}\left( x_{0} \right)\right |,
\end{eqnarray}
where 
$$r = \rho_{0} 2^{-\frac{1}{q_{\min, s}} }2^{- \mathfrak{C}\left( \frac{b_{\min}}{2} \right) k_{0}}.$$
Let $\mathcal{R}_{j}$ be the largest rectangle in the family containing $x_{0}$ such that
$$
2^{-\mathfrak{C}\left( k_{0} + 2\right)} \left( \rho_{0} 2^{-\frac{1}{q_{\min, s}}} \right)^{\frac{2}{b_{i}}} \leq l_{j} < 2^{-\mathfrak{C}\left( k_{0} + 1\right)} \left( \rho_{0} 2^{-\frac{1}{q_{\min, s}}} \right) ^{\frac{2}{b_{i}}}. 
$$
Thus, from Lemma \ref{Fundamental Geometry} we obtain
$$
\mathcal{R}_{j} \subset E_{r, 1/4} \quad \text{and} \quad E_{r,1} \subset C\tilde{\mathcal{R}}_{j},
$$ 
for some $C=C(n, b_{\min}, b_{\max}) > 1 $. Furthermore, since $\Gamma$ is concave in $B_{2}$, we find
$$
\Gamma \left( y \right) \leq u \left( x_{0} \right) + \langle y - x_{0}, \nabla \Gamma \left( x_{0} \right)\rangle 
$$
in $B_{2}$. Thus, denoting
$$
A_{j}:= \left\lbrace y \in \tilde{\mathcal{R}}_{j} : u\left( y \right) \geq \Gamma \left( y \right) - C\left( \max_{\tilde{\mathcal{R}}_{j}} f \right) \left( \tilde{d}_{j}\right)^{2}  \right\rbrace ,
$$
using \eqref{meas est diam}, \eqref{meas est diam 2}, we obtain
\begin{eqnarray*}
\left | A_{j} \right | & \geq & \left | \left\lbrace y  \in  \tilde{\mathcal{R}}_{j} : u\left( y\right) \geq u\left( x_{0} \right) + \langle y - x_{0}, \nabla \Gamma \left( x_{0} \right)\rangle \right. \right. \\
& & \left. \left. - C f\left( x_{0} \right)\sum \limits_{i=1}^{n} r^{\frac{4}{b_{i}}} \right\rbrace \right | \\ 
& \geq & \left( 1 - \varepsilon_{1} \right) \left | E_{r, 1} \setminus E_{l r, \frac{1}{2}} \right | \\ 
& = & \left( 1 - \varepsilon_{1} \right) r^{c} \left | B_{1} \setminus E_{l, 1} \right | \\ 
& = & \varsigma \left |\tilde{\mathcal{R}}_{j}\right |
\end{eqnarray*}
and
\begin{eqnarray*}
\left | \nabla \Gamma \left( \mathcal{R}_{j} \right) \right | & \leq & \left | \nabla \Gamma \left( E_{r, 1/4} \left( x_{0} \right) \right) \right | \\  
& \leq & C f\left( x_{0} \right)^{n} \left |   E_{r, 1/4} \left( x_{0} \right) \right | \\ 
& = & C f\left( x_{0} \right)^{n} r^{c} \left |   B_{1/4} \left( x_{0} \right) \right | \\
& = & C_{2} f\left( x_{0} \right)^{n} \left | \tilde{\mathcal{R}}_{j} \right |.
\end{eqnarray*} 
Then $\mathcal{R}_{j}$ would not be split and the process must stop, which is a contradiction.
\end{proof}

\begin{remark}
We emphasize that if $b_{\max} = b_{\min} = 2$ we recover the ABP estimate obtained in \cite{CS}. Furthemore, for $b_{\max} = n+ \sigma_{\max}$ and $b_{\min} = n + \sigma_{\min}$ with $\sigma_{\max}, \sigma_{\min} \in (0, 2)$ we have the ABP estimate reached in \cite{CLU}.
\end{remark}

\subsection{A barrier function}\label{Barrier function section}

In order to locate the contact set of a solution $u$ of the maximal equation, as in Lemma \ref{cov. 1 lemma}, we build a barrier function which is a supersolution of the minimal equation outside a small ellipse and is positive outside a large ellipse. 

\begin{lemma}
\label{Barr function 1}
Given $R > 1, $ there exist $p >0$ and $s_{0} \in \left( 0, \frac{4}{b_{\max}}\right)$ such that the function
$$f\left( x \right) = \min \left( 2^{p}, \  | x |^{-p}\right) $$
satisfies
$$\mathcal{M}^{-}f\left( x \right) \geq 0, $$
for $s_{0} < s$ and $1 \leq | x | \leq R $, where $p = p\left( n, \lambda, \Lambda, b_{\min}, b_{\max}, R \right)$, $s_{0}= s_{0}\left( n, \lambda, \Lambda, b_{\min}, b_{\max}, R \right)$.
\end{lemma}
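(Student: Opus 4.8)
The plan is to exhibit $f(x) = \min(2^p, |x|^{-p})$ as an explicit barrier by computing $\mathcal{M}^-f(x)$ at an arbitrary point $x$ with $1 \le |x| \le R$ and showing it is nonnegative once $p$ is large and $s$ close enough to $4/b_{\max}$. First I would split the defining integral for $\mathcal{M}^-$ according to the regions where $\delta(f,x,y)$ has a definite sign. Writing $\delta(f,x,y) = f(x+y)+f(x-y)-2f(x)$, the key observation is that the function $|x|^{-p}$ is strictly convex along rays away from the origin, so for $y$ small relative to $|x|$ (say $x\pm y$ both staying in the annulus where $f(x)=|x|^{-p}$ and away from the cutoff region), we have $\delta(f,x,y) \ge 0$, and in fact $\delta^+$ dominates with a quantitative lower bound of order $p(p+1)|x|^{-p-2}|y|^2$ from a second-order Taylor expansion of $t\mapsto|x+ty|^{-p}$. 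This is the ``good'' region $G$ near the origin in $y$. The remaining region $\mathbb{R}^n\setminus G$ contributes a negative term controlled by $\delta^-$; there $|y|$ is bounded below, so the kernel $\|y\|^{-(c+s)}$ is integrable at infinity (since $s>0$) and the total mass of that piece is bounded by a constant depending on $n,\lambda,\Lambda,b_{\min},b_{\max},R$ times $\sup f \le 2^p$ — but crucially the negative contribution carries a factor like $\int_{\{|y|\ge c_0\}} \|y\|^{-(c+s)}\,dy$ whose size we will compare against the positive contribution.

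The heart of the estimate is therefore a competition: from the good region,
\[
\mathcal{M}^-f(x) \ge q_{\max,s}\,\lambda \int_{G} \frac{\delta^+(f,x,y)}{\|y\|^{c+s}}\,dy - q_{\max,s}\,\Lambda \int_{\mathbb{R}^n\setminus G} \frac{\delta^-(f,x,y)}{\|y\|^{c+s}}\,dy,
\]
and I would show the first integral is bounded below by $c_1\, p\, |x|^{-p-2}$ while the second is bounded above by $c_2\, |x|^{-p}$ (using $f\le |x|^{-p}$ on the relevant scales together with the decay of $f$ at infinity, plus the fact that the region $G$ can be taken to be $E_{|x|, c_3}(0)$-type, i.e.\ adapted to the anisotropic geometry via Lemma~\ref{Fundamental Geometry}). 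Since $1\le|x|\le R$, we have $|x|^{-p-2}\ge R^{-p-2}$ and $|x|^{-p}\le 1$, so it suffices to arrange $c_1 p R^{-p-2} \ge c_2$, which holds for $p$ large depending on $R$. One subtlety: the normalization constant $q_{\max,s} = \tfrac{4}{b_{\max}}-s$ degenerates as $s\uparrow 4/b_{\max}$, but it multiplies both sides, so it cancels in the comparison; what the choice of $s_0$ buys us instead is that the negative tail $\int_{\mathbb{R}^n\setminus G}\|y\|^{-(c+s)}\,dy$ stays bounded and the good-region integral $\int_G |y|^2\|y\|^{-(c+s)}\,dy$ stays bounded below uniformly — a Silvestre-type scaling argument exactly as in Lemma~\ref{Barrier function}, where one sums geometric series in the anisotropic dyadic shells $E_{r_k,1}\setminus E_{r_{k+1},1}$ and needs the exponent $q_{\max,s}>0$, hence $s<4/b_{\max}$, for convergence; one also needs $s>s_0$ to keep the comparison constants in $G$ from blowing up.

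The main obstacle I anticipate is making the lower bound on the good-region integral genuinely uniform in $x$ over the annulus $1\le|x|\le R$ while respecting the anisotropic norm $\|\cdot\|$: the Hessian of $|x|^{-p}$ is the Euclidean one, but the kernel and the natural shells are anisotropic, so one must carefully choose the good region $G$ — concretely an anisotropic ellipse $E_{r,\rho}(0)$ with $r\sim|x|$ and $\rho$ small enough (depending only on $n, b_{\min}, b_{\max}$) that $x\pm y$ remains in the region $\{f=|\cdot|^{-p}\}$ and stays a definite distance from the origin — and then estimate $\int_G |y|^2\|y\|^{-(c+s)}\,dy$ from below by switching to the scaling $T_{\beta,r}$ and summing over shells, as in the proof of Lemma~\ref{Barrier function}. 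Everything else is bookkeeping: the positivity of $\delta$ in $G$ follows from convexity of $|\cdot|^{-p}$ and the fact that $f=\min(2^p,|\cdot|^{-p})$ only bends $f$ further down near the origin (which only helps $\delta^+$), and the negative tail is handled by $|\delta^-|\le 4\sup f$ together with the integrability of $\|y\|^{-(c+s)}$ away from $0$.
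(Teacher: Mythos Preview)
There is a genuine gap. Your key claim that ``for $y$ small \ldots\ we have $\delta(f,x,y)\ge 0$'' is false: the function $|x|^{-p}$ is \emph{not} convex on $\mathbb{R}^n$, only along rays through the origin. Its Hessian at $x$ has eigenvalue $p(p+1)|x|^{-p-2}$ in the radial direction but eigenvalue $-p|x|^{-p-2}$ in each of the $n-1$ tangential directions. Hence for small $y$ orthogonal to $x$ one has $\delta(f,x,y)<0$, so the ``good region'' $G$ does not consist entirely of points where $\delta\ge 0$, and your lower bound $\delta^+\gtrsim p(p+1)|x|^{-p-2}|y|^2$ on $G$ is unjustified (it holds only in a narrow cone of aperture $\sim p^{-1/2}$ about the radial direction). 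The near-origin piece of the integral therefore carries both a positive and a negative contribution, and the whole point of the argument is the competition between them.

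The paper handles this by expanding $\delta(f,e_1,y)$ explicitly (after rotating $x$ to $|x|e_1$) to get the pointwise lower bound $\delta \ge p\big[(p+2)y_1^2 - |y|^2\big] - \text{higher order}$, and then feeding each summand separately into $\mathcal{M}^-$: the radial term $(p+2)y_1^2$ contributes a positive integral with a factor $(p+2)$, while the tangential term $|y|^2$ contributes a negative integral \emph{without} that factor. Choosing $p$ large makes the former dominate the latter; the higher-order remainder and the tail outside $B_{1/4}$ are then handled with $s>s_0$. A secondary (fixable) issue in your write-up is the comparison $c_1 p R^{-p-2}\ge c_2$: you bounded $|x|^{-p-2}$ and $|x|^{-p}$ separately instead of cancelling the common factor $|x|^{-p}$, which would leave the satisfiable condition $c_1 p|x|^{-2}\ge c_2$, i.e.\ $p\gtrsim R^2$; as written, your inequality fails for all large $p$.
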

    
\begin{proof}
Consider the following elementary inequalities:
\begin{equation}
\label{bar func: elem ineq 1}
\left( a_{2} + a_{1}\right)^{-l} + \left( a_{2} - a_{1}\right)^{-l} \geq  2a^{-l}_{2} + l\left( l + 1\right)a^{2}_{1}a_{2}^{-l-2} 
\end{equation}
and
\begin{equation}
\label{bar func: elem ineq 2}
\left( a_{2} + a_{1}\right)^{-l} \geq a^{-l}_{2}\left( 1 - l\frac{a_{1}}{ a_{2}}\right). 
\end{equation}
where $0 < a_{1} < a_{2}$ and $l > 0$. Suppose without loss of generality that $b_{1} = b_{\max}$. Taking into account the inequalities \eqref{bar func: elem ineq 1} and \eqref{bar func: elem ineq 2}, we estimate, for $| y | < \frac{1}{2}$, 
\begin{eqnarray*}
\delta (f,e_1,y)& := & \nonumber | e_{1} + y|^{-p} + | e_{1} - y|^{-p} - 2 \\ 
& = & \left(  1 + |y|^{2} + 2y_{1} \right)^{-\frac{p}{2}} +  \left(  1 + |y|^{2} - 2y_{1} \right)^{-\frac{p}{2}} - 2 \\ 
& \geq & 2\left( 1 + |y|^{2} \right)^{-\frac{p}{2}} + p\left( p + 2 \right)y_{1}^{2} \left( 1 + |y|^{2} \right)^{-\frac{p+4}{2}} - 2 \\ 
& \geq &  2\left( 1 - \frac{p}{2}|y|^{2}\right)  +  p\left( p + 2 \right)^{}y_{1}^{2} - p\left( p + 4\right) \frac{\left( p + 2 \right)}{2}y^{2}_{1}|y|^{2}  - 2\\
& = & p \left[  - |y|^{2} + \left( p + 2 \right)y_{1}^{2} - \left( p + 4\right) \frac{\left( p + 2 \right)}{2}y^{2}_{1}|y|^{2}\right].
\end{eqnarray*}
If $1 \leq |x| \leq R $, there is a rotation $T_{x}:\mathbb{R}^{n} \rightarrow \mathbb{R}^{n}$ such that $x = |x| Te_{1}$. Then, changing variables, we obtain
$$
M^{-}f\left( x \right) = q_{\max, s}  |x|^{n-p} \left | \det T_{x} \right | \left[ \int_{\mathbb{R}^{n}} \dfrac{\lambda \delta^{+}\left(f, e_{1}, y \right) - \Lambda\delta^{-}\left(f, e_{1}, y \right)}{\left( \sum_{i=1}^{n}|\left( |x|T_{x}y\right) _{i}|^{b_{i}}\right)^{\frac{c+s}{2}} } dy \right]. 
$$
Thus, we can estimate
\begin{eqnarray}\label{below estimate for min sol}
|x|^{p-n} M^{-}f\left( x \right) & = & \nonumber q_{\max, s} \int_{B_{1/4}\left( 0 \right)}  \dfrac{\Lambda \delta^{+}\left(f, e_{1}, y \right) - \lambda \delta^{-}\left(f, e_{1}, y \right) }{\left( \sum_{i=1}^{n}||x|\left( T_{x}y\right) _{i}|^{b_{i}}\right)^{\frac{c+s}{2}}} dy \nonumber \\ 
& & + \ q_{\max, s} \int_{\mathbb{R}^{n} \setminus B_{1/4}\left( 0 \right)} \dfrac{\Lambda \delta^{+}\left(f, e_{1}, y \right) - \lambda \delta^{-}\left(f, e_{1}, y \right) }{\left( \sum_{i=1}^{n}||x|\left( T_{x}y\right) _{i}|^{b_{i}}\right)^{\frac{c+s}{2}}} dy \nonumber \\ 
& \geq &  q_{\max, s} \int_{B_{1/4}\left( 0 \right)}\dfrac{2p \lambda\left( p + 2 \right)y^{2}_{1}}{\left( \sum_{i=1}^{n}||x|\left( T_{x}y\right) _{i}|^{b_{i}}\right)^{\frac{c+s}{2}}} dy  \nonumber\\
& & -q_{\max, s} \int_{B_{1/4}\left( 0 \right)}\dfrac{2p \Lambda |y|^{2}}{\left( \sum_{i=1}^{n}||x|\left( T_{x}y\right) _{i}|^{b_{i}}\right)^{\frac{c+s}{2}}} dy \nonumber\\ 
&  & - q_{\max, s}\int_{B_{1/4}\left( 0 \right)}\dfrac{\Lambda \frac{1}{2}p\left( p + 4\right) \left( p + 2 \right)|y|^{4}}{\left( \sum_{i=1}^{n}||x|\left( T_{x}y\right) _{i}|^{b_{i}}\right)^{\frac{c+s}{2}}} dy \nonumber\\ 
&  &+ q_{\max, s} \int_{\mathbb{R}^{n} \setminus B_{1/4}\left( 0 \right)} \dfrac{- \lambda 2^{p+1} }{\left( \sum_{i=1}^{n}||x|\left( T_{x}y\right) _{i}|^{b_{i}}\right)^{\frac{c+s}{2}}} dy \nonumber\\ 
& := & I_{1} + I_{2} + I_{3} + I_{4},
\end{eqnarray}
where $I_{1}$, $I_{2}$, $I_{3}$ and $I_{4}$ represent the three terms on the right-hand side of the above inequality. 

Changing variables again, we get

\begin{eqnarray}
\label{NEW BARRIER LS 1}
\int_{B_{1/4}\left( 0 \right)}\dfrac{y^{2}_{1}}{\left( \sum_{i=1}^{n}||x|\left( T_{x}y\right) _{i}|^{b_{i}}\right)^{\frac{c+s}{2}}} dy & = & \int_{T_{x}^{-1}(B_{1/4}\left( 0 \right))}\dfrac{y^{2}_{1}}{\left( \sum_{i=1}^{n}||x|\left( T_{x}y\right) _{i}|^{b_{i}}\right)^{\frac{c+s}{2}}} dy  \\ \nonumber
& = & \int_{B_{1/4}\left( 0 \right)}\dfrac{ \langle \vert x \vert^{-1} T_{x}^{-1}y, e_{1} \rangle^{2}  }{\Vert y \Vert^{c+s}} \vert x \vert^{-n} dy \\ 
& = & \vert x \vert^{-n} \int_{B_{1/4}\left( 0 \right)}\dfrac{ \langle T_{x}^{-1}y,  \vert x \vert^{-1} e_{1} \rangle^{2}  }{\Vert y \Vert^{c+s}}dy \\ 
& = & \vert x \vert^{-n} \int_{B_{1/4}\left( 0 \right)}\dfrac{ \langle y,  x \rangle^{2}  }{\Vert y \Vert^{c+s}}dy . 
\end{eqnarray}
Moreover, without loss of generality, we can assume that 
$$ x \in \left\lbrace y \in \mathbb{R}^{n}: x_{i} \geq 0 \right\rbrace \quad \text{and} \quad x_{1} \geq \frac{1}{n}.$$ From Lemma \ref{Fundamental Geometry} there exists $r_{0} = r_{0}(n, b_{min}, b_{max}) \in (0, 1)$ such that $E_{r_{0}, 1} \subset B_{1/4}$. 
Then, from \eqref{NEW BARRIER LS 1} we estimate
\begin{eqnarray*}
\label{I_{1} estimate}
p^{-1} I_{1} & \geq & q_{\max, s} n^{-1} \lambda \left( p+2\right) |x|^{- \left( n+2\right)} \int_{E_{r_{0}, 1}}\dfrac{ y^{2}_{1}}{\Vert y \Vert^{c+s}} dy.  \\ 
& \geq & c(n,  b_{\min}, b_{\max} )R^{- \left( n+2\right)}n^{-1} \left[ \lambda \left( p+2\right)\int_{\partial B_{1}} y^{2}_{1} d\nu \left( y \right)\right] \left[ \dfrac{r^{q_{\max, s}}_{0}q_{\max, s}}{1 - 2^{-q_{\max, s}}}\right] \\ 
& \geq & C_{3}\left[\left( p+2\right)\int_{\partial B_{1}} y^{2}_{1} d\nu \left( y \right)\right], \\ 
\end{eqnarray*}
where $C_{3}=C_{3}\left( n, \lambda, \Lambda, b_{\min}, b_{\max}, R \right)>0$. Let $C=C\left( n, b_{\max}, b_{\min} \right)$ be a positive constant such that $B_{1/4}\left( 0 \right)  \subset E_{C, 1}$. Then, for $|x| \geq 1$ we get
\begin{eqnarray*}
\label{I_{2} estimate}
p^{-1}I_{2} & \geq & - C_{4} q_{\max, s}  \int_{B_{1/4}\left( 0 \right)}\dfrac{|y|^{2}}{\left( \sum_{i=1}^{n}|\left( T_{x}y\right) _{i}|^{b_{i}}\right)^{\frac{c+s}{2}}} dy \\ 
& = & - C_{4} q_{\max, s} \left | \det T^{-1}_{x} \right |\int_{B_{1/4}\left( 0 \right)}\dfrac{|T_{x}^{-1}y|^{2}}{\Vert y \Vert^{c+s}} dy \\ 
& = & - C_{4} R^{-n} q_{\max, s} \int_{B_{1/4}\left( 0 \right)}\dfrac{|y|^{2}}{\Vert y \Vert^{c+s}} dy \\ 
& \geq & - C_{5} q_{\max, s} \int_{E_{C, 1}}\dfrac{|y|^{2}}{\Vert y \Vert^{c+s}} dy, 
\end{eqnarray*}
where $C_{4}=C_{4}\left( n, \lambda, \Lambda, b_{\min}, b_{\max}, R \right)$.
We have also
$$
q_{\max, s} \int_{E_{C,1}}\dfrac{|y|^{2}}{\Vert y \Vert^{c+s}} dy = q_{\max, s} \sum_{k=1}^{\infty} \int_{E_{r_{k}, 1} \setminus E_{r_{k+1}, 1}}\dfrac{|y|^{2}}{\Vert y \Vert^{c+s}} dy \leq C_{5},
$$
where $r_{k}:= C 2^{-k}$ and $C_{5}=C_{5}\left( n, \lambda, \Lambda, b_{\max}, b_{min} \right)$.
Moreover, we have
\begin{eqnarray}
\label{I_{3} estimate}
I_{3} & \geq & - C_{6}q_{\max.s} \int_{E_{C, 1}}\dfrac{|y|^{4}}{\Vert y \Vert^{c+s}} dy \nonumber \\ 
& \geq & - C_{7}\left[ \dfrac{q_{\max, s}}{1 - 2^{- \left( \dfrac{16}{b_{\max}} - s\right) }}\right]  \\
\end{eqnarray}
and, if $r_{1} = r_{1}(r_{0}) > 0$ is such that $\Theta_{r_{1}} \subset E_{r_{0}, 1}$, we obtain 
\begin{eqnarray}
\label{I_{4} estimate}
I_{4} & \geq & -  C_{8} q_{\max, s} \int_{\mathbb{R}^{n} \setminus E_{r_{0}, 1}}\dfrac{|y|^{4}}{\Vert y \Vert^{c+s}} dy \\ \nonumber
& \geq & - C_{8} q_{\max, s} \int_{\mathbb{R}^{n} \setminus \Theta_{r_{1}}}\dfrac{|y|^{4}}{\Vert y \Vert^{c+s}} dy \\ \nonumber
& \geq &- C_{9}\dfrac{q_{\max, s}}{\left( \frac{16}{b_{\max}}-s\right) } ,
\end{eqnarray}
for positive constants $C_{7}=C_{7}\left( n, \lambda, \Lambda, b_{\min}, b_{\max}, p \right)$ and $C_{8}=C_{8}\left( n, \lambda, \Lambda, b_{\min}, b_{\max}, p \right)$. Choosing $p=p\left( n, \lambda, \Lambda, b_{\min}, b_{\max}, R \right) > 0$ such that
$$
C_{3} \left( p+2\right) \int_{\partial B_{1}} y^{2}_{1} d\nu \left( y \right) - C_{4}C_{5}  > 0 
$$
and combining \eqref{below estimate for min sol}, \eqref{I_{3} estimate} and \eqref{I_{4} estimate}, there is a positive constant $s_{0}=s_{0}\left( n, \lambda, \Lambda, b_{\min}, b_{\max}, R \right) < \frac{4}{b_{\max}}$ such that
$$
|x|^{p-n} M^{-}f\left( x \right)  \geq   C_{9} >0,
$$
for a positive constant $C_{9}=C_{9}\left( n, \lambda, \Lambda,  b_{\min}, b_{\max}, R \right)$.
\end{proof}
 
 As in \cite{CLU}, from Lemma \ref{Barr function 1} we get the following results:    
\begin{corollary}
\label{Coroll Barr function 1}
Given $s_{0} \in \left( 0,  \frac{4}{b_{\max}}\right)$, and $R > 1$, there exist $\kappa>0$ and $p >0$ such that the function
$$
f\left( x \right) = \min \left( \kappa^{-p}, \  | x |^{-p}\right)  
$$
satisfies
$$
\mathcal{M}^{-}f\left( x \right) \geq 0,
$$
for $1  \leq | x | \leq R $ and $s_{0} < s < \frac{4}{b_{\max}}$, where $p = p\left( n, \lambda, \Lambda, b_{\min}, b_{\max}, R \right)$ and $\kappa = \kappa \left( n, \lambda, \Lambda, b_{\min}, b_{\max}, s_{0}, R \right)$.
\end{corollary}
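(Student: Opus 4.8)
This is the version of Lemma \ref{Barr function 1} in which the threshold $s_{0}$ is prescribed in advance; it is obtained by capping the power $|x|^{-p}$ at the lower level $\kappa^{-p}$ and exploiting the positive spike that this cap forces on the second difference $\delta(f,x,\cdot)$ near the reflected point $-x$. First I would fix $p=p(n,\lambda,\Lambda,b_{\min},b_{\max},R)$ as in Lemma \ref{Barr function 1}, enlarging it if necessary so that $p>n$ (the proof of that lemma only forces $p$ to be large, so this costs nothing), and pick $r_{0}=r_{0}(n,b_{\min},b_{\max})\in(0,1)$ with $E_{r_{0},1}\subset B_{1/4}$, as in the proof of Lemma \ref{Barr function 1}. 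For $\kappa\in(0,1/2)$ to be chosen later, set $f(x)=\min(\kappa^{-p},|x|^{-p})$; then $f$ agrees with $|x|^{-p}$ on $\mathbb{R}^{n}\setminus B_{\kappa}$, which contains $\{1\le|x|\le R\}$, so for such $x$ the function $f$ is smooth near $x$ and bounded, $\mathcal{M}^{-}f(x)$ is finite and evaluated classically, and the rotation $T_{x}$ with $x=|x|T_{x}e_{1}$ used in Lemma \ref{Barr function 1} is available.

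Fix $x$ with $1\le|x|\le R$ and split $\mathbb{R}^{n}$ into the pairwise disjoint sets $E_{r_{0},1}$, $B_{\kappa}(x)$, $B_{\kappa}(-x)$ and the remainder $\mathcal{O}$, estimating $\mathcal{M}^{-}f(x)=q_{\max,s}\int \Vert y\Vert^{-(c+s)}(\lambda\delta^{+}-\Lambda\delta^{-})\,dy$ on each. On $E_{r_{0},1}$ one has $f=|x|^{-p}$ near $x$, so \eqref{bar func: elem ineq 1}--\eqref{bar func: elem ineq 2} give the same second-order lower bound for $\delta(f,x,y)$ as in the proof of Lemma \ref{Barr function 1}; integrating it against $q_{\max,s}\Vert y\Vert^{-(c+s)}$ and summing over the dyadic annuli $E_{r_{k},1}\setminus E_{r_{k+1},1}$ exactly as there (and as in the proof of Lemma \ref{Barrier function}) gives a contribution bounded below by $q_{\max,s}\Phi(p)$, where $\Phi(p)$ depends only on $p,n,\lambda,\Lambda,b_{\min},b_{\max}$ and is bounded below uniformly for $s\in(0,4/b_{\max})$, because the scale factors $q_{\max,s}/(1-2^{-q_{\max,s}})$ and $q_{\max,s}/(1-2^{-(16/b_{\max}-s)})$ stay bounded away from $0$ and $\infty$ on that interval. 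On each ball $B_{\kappa}(\pm x)$ the point $x\pm y$ lies in the plateau, so $f(x\pm y)=\kappa^{-p}$ while $f(x\mp y)\ge 0$ and $2f(x)=2|x|^{-p}\le 2$; hence $\delta(f,x,y)\ge\kappa^{-p}-2>0$ for $\kappa$ small, and since $\Vert y\Vert$ stays between two positive constants depending on $R,b_{\min},b_{\max}$ there, this contributes at least $c_{0}\lambda q_{\max,s}\kappa^{n-p}$ with $c_{0}=c_{0}(n,\lambda,b_{\min},b_{\max},R)>0$, a quantity that tends to $+\infty$ as $\kappa\downarrow 0$ because $n-p<0$. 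Finally, on $\mathcal{O}$ one has $\delta^{-}\le 2f(x)=2|x|^{-p}\le 2$ pointwise, so, with $r_{1}$ chosen so that $\Theta_{r_{1}}\subset E_{r_{0},1}$ as in the proof of Lemma \ref{Barrier function}, this part is at least $-2\Lambda q_{\max,s}\int_{\mathbb{R}^{n}\setminus E_{r_{0},1}}\Vert y\Vert^{-(c+s)}dy\ge -2\Lambda q_{\max,s}\,r_{1}^{c}\,C(n,\beta)/s$, which for $s>s_{0}$ is at least $-2\Lambda q_{\max,s}\,r_{1}^{c}\,C(n,\beta)/s_{0}$; here $C(n,\beta)$ is the constant from that proof, uniformly bounded for $s\in(0,4/b_{\max})$.

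Adding the three contributions and dividing by $q_{\max,s}>0$ gives $q_{\max,s}^{-1}\mathcal{M}^{-}f(x)\ge\Phi(p)+c_{0}\lambda\kappa^{n-p}-2\Lambda r_{1}^{c}C(n,\beta)/s_{0}$, and the right-hand side is nonnegative once $\kappa=\kappa(n,\lambda,\Lambda,b_{\min},b_{\max},s_{0},R)\in(0,1/2)$ is taken small enough; this is exactly $\mathcal{M}^{-}f(x)\ge 0$ for all $1\le|x|\le R$ and all $s\in(s_{0},4/b_{\max})$. The step I expect to be most delicate is the uniformity in $s$: since $s_{0}$ is fixed first and $s$ ranges over the whole interval $(s_{0},4/b_{\max})$, each constant produced by the proof of Lemma \ref{Barr function 1} must be checked not to degenerate, neither as $s\uparrow 4/b_{\max}$ (where $q_{\max,s}\to 0$ but the scale-invariant sums stay bounded) nor, more seriously, as $s\downarrow s_{0}$, where the far-field term behaves like $1/s$. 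The plateau is precisely what absorbs that $1/s$: for $p$ fixed with $p>n$ the spike $\kappa^{n-p}$ of $\delta$ over $B_{\kappa}(\pm x)$ can be driven arbitrarily large by shrinking $\kappa$, and this is the only structural difference between the corollary and Lemma \ref{Barr function 1}.
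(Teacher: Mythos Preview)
Your argument is correct and is precisely the standard mechanism behind this corollary. The paper itself does not spell out a proof here---it simply writes ``As in \cite{CLU}, from Lemma \ref{Barr function 1} we get the following results''---so there is no detailed proof to compare against; what you have written is exactly the argument that reference points to.

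A couple of small remarks. First, you do not actually need the near-field term $\Phi(p)$ to be positive: as you implicitly note, it is merely bounded below by a finite constant depending on $p,n,\lambda,\Lambda,b_{\min},b_{\max}$, uniformly for $s\in(0,4/b_{\max})$, and the plateau spike $c_0\lambda\kappa^{n-p}$ absorbs it together with the far-field $-C/s_0$. Second, in quoting the far-field bound from the proof of Lemma~\ref{Barrier function} you reproduce the factor $r_1^{c}$; the correct power from the anisotropic change of variables is $r_1^{-s}$, but since $r_1\in(0,1)$ and $s<4/b_{\max}$ this is bounded by $r_1^{-4/b_{\max}}$, so the discrepancy is harmless and does not affect your argument.
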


\begin{corollary}
\label{Coroll Barr function 2}
Given $r > 0 $, $R > 1$ and $s_{0} \in \left( 0,  \frac{4}{b_{\max}}\right)$, there exist $\kappa>0$ and $p >0$ such that the function
$$
g\left( x \right) = \min \left( \kappa^{-p}, \  | T_{\beta, r}^{-1}x |^{-p}\right)  
$$
satisfies
$$
\mathcal{M}^{-}g\left( x \right) \geq 0 
$$
for $s_{0} < s < \frac{4}{b_{max}}$ and $ x \in E_{r,R}\setminus E_{r, 1} $, where $p = p\left( n, \lambda, \Lambda, b_{\min}, b_{\max}, R \right)$ and $\kappa = \kappa \left( n, \lambda, \Lambda, b_{\min}, b_{\max}, s_{0}, R \right)$.
\end{corollary}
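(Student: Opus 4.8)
The plan is to obtain Corollary \ref{Coroll Barr function 2} from Corollary \ref{Coroll Barr function 1} by a scaling argument adapted to the anisotropic dilation $T_{\beta,r}$. Fix $R>1$ and $s_{0}\in(0,4/b_{\max})$, let $f(x)=\min(\kappa^{-p},|x|^{-p})$ be the radial barrier furnished by Corollary \ref{Coroll Barr function 1}, with constants $p=p(n,\lambda,\Lambda,b_{\min},b_{\max},R)$ and $\kappa=\kappa(n,\lambda,\Lambda,b_{\min},b_{\max},s_{0},R)$ (which we may assume to satisfy $\kappa<1$, so that $f$ is smooth on $\{|x|\geq1\}$), and observe that the candidate barrier in the statement is exactly $g=f\circ T_{\beta,r}^{-1}$. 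Since $T_{\beta,r}^{-1}(E_{r,R}\setminus E_{r,1})=B_{R}\setminus B_{1}$ by Lemma \ref{Fundamental Geometry}(5), the function $g$ is bounded and smooth, hence $C^{1,1}$, on $E_{r,R}\setminus E_{r,1}$, so for $x$ in this set $\mathcal{M}^{-}g(x)$ is well defined in the classical sense and given by
$$
\mathcal{M}^{-}g(x)=q_{\max,s}\int_{\mathbb{R}^{n}}\frac{\lambda\,\delta^{+}(g,x,y)-\Lambda\,\delta^{-}(g,x,y)}{\Vert y\Vert^{c+s}}\,dy .
$$

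The key step is the change of variables $z=T_{\beta,r}^{-1}y$ in this integral. From the definition $\delta(u,x,y)=u(x+y)+u(x-y)-2u(x)$ together with the linearity of $T_{\beta,r}$ one has $\delta(g,x,y)=\delta(f,T_{\beta,r}^{-1}x,T_{\beta,r}^{-1}y)$; the Jacobian is $dy=\det(T_{\beta,r})\,dz=r^{c}\,dz$ since $c=\sum_{i}2/b_{i}$; and, crucially, $\Vert T_{\beta,r}z\Vert^{2}=\sum_{i}|r^{2/b_{i}}z_{i}|^{b_{i}}=r^{2}\Vert z\Vert^{2}$, whence $\Vert y\Vert^{c+s}=r^{c+s}\Vert z\Vert^{c+s}$. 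Substituting these identities, the powers of $r$ collapse and one is left with the scaling relation
$$
\mathcal{M}^{-}g(x)=r^{-s}\,\mathcal{M}^{-}f\bigl(T_{\beta,r}^{-1}x\bigr)\qquad\text{for all }x\in E_{r,R}\setminus E_{r,1}.
$$

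To finish, for $x\in E_{r,R}\setminus E_{r,1}$ Lemma \ref{Fundamental Geometry}(5) gives $T_{\beta,r}^{-1}x\in B_{R}\setminus B_{1}$, that is $1\leq|T_{\beta,r}^{-1}x|\leq R$, so Corollary \ref{Coroll Barr function 1} applies and yields $\mathcal{M}^{-}f(T_{\beta,r}^{-1}x)\geq0$; since $r^{-s}>0$, the scaling relation gives $\mathcal{M}^{-}g(x)\geq0$, which is precisely the claim, with $p$ and $\kappa$ the constants inherited from Corollary \ref{Coroll Barr function 1} (in particular independent of $r$). I do not expect a genuine obstacle: the entire argument rests on the elementary homogeneity identity $\Vert T_{\beta,r}z\Vert=r\Vert z\Vert$, which is what makes $\mathcal{M}^{-}$ transform correctly under $T_{\beta,r}$, together with the equally routine check that $g$ is regular enough on $E_{r,R}\setminus E_{r,1}$ to evaluate $\mathcal{M}^{-}g$ there classically.
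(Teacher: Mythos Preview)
Your argument is correct and is exactly the anisotropic scaling argument the paper has in mind: the paper does not spell out a proof of this corollary but simply states that it follows from Lemma~\ref{Barr function 1} ``as in \cite{CLU}'', and the natural route is precisely to compose the barrier of Corollary~\ref{Coroll Barr function 1} with $T_{\beta,r}^{-1}$ and use the homogeneity $\Vert T_{\beta,r}z\Vert=r\Vert z\Vert$ to obtain the scaling relation $\mathcal{M}^{-}g(x)=r^{-s}\mathcal{M}^{-}f(T_{\beta,r}^{-1}x)$. Your verification of the Jacobian, the behavior of $\delta(\cdot,\cdot,\cdot)$ under the linear map, and the identification $T_{\beta,r}^{-1}(E_{r,R}\setminus E_{r,1})\subset\{1\leq|z|\leq R\}$ via Lemma~\ref{Fundamental Geometry}(5) are all in order.
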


\begin{lemma}
\label{Exist barr func 5}
Given $s_{0}\in \left(0, \frac{4}{b_{\max}} \right)$, there is a function $\Psi:\mathbb{R}^{n} \rightarrow \mathbb{R}$ satisfying
\begin{enumerate}
\item $\Psi$ is continuous in $\mathbb{R}^{n}$;
\medskip
\item $\Psi = 0$ for $x \in \mathbb{R}^{n} \setminus E_{\frac{1}{4},3\sqrt{n}}$;
\medskip
\item $\Psi> 3$ for $x \in \mathcal{R}_{\frac{1}{4},3}$; 
\medskip
\item $\mathcal{M}^{-}\Psi\left( x\right) > - \varphi \left( x \right) $ for some positive function $\varphi \in C_{0}\left( E_{\frac{1}{4},1} \right)$ for $s_{0} <  s < \frac{4}{b_{\max}}$.
\end{enumerate}
\end{lemma}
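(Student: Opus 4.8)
The plan is to build $\Psi$ from the rescaled power barrier of Corollary~\ref{Coroll Barr function 2}: subtract the constant that makes it vanish on $\partial E_{\frac{1}{4},3\sqrt{n}}$, take positive parts so that it dies outside, and smooth the inner cap so that $\mathcal{M}^{-}\Psi$ stays finite. The set $E_{\frac{1}{4},1}$ is exactly the zone in which the subsolution property is permitted to fail, and that failure will be absorbed into $\varphi$. First I would fix $r=\tfrac14$ and $R_{0}=3\sqrt{n}$ and invoke Corollary~\ref{Coroll Barr function 2} with these $r$, $R_{0}$ and the given $s_{0}$, obtaining $p>0$ and $0<\kappa\leq 1$ such that $g(x):=\min\{\kappa^{-p},|T_{\beta,1/4}^{-1}x|^{-p}\}$ satisfies $\mathcal{M}^{-}g(x)\geq 0$ on $E_{\frac{1}{4},R_{0}}\setminus E_{\frac{1}{4},1}$ for $s_{0}<s<\tfrac{4}{b_{\max}}$. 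Since $T_{\beta,1/4}(B_{l})=E_{\frac{1}{4},l}$ by Lemma~\ref{Fundamental Geometry}, one has $|T_{\beta,1/4}^{-1}x|=l$ on $\partial E_{\frac{1}{4},l}$; hence $g$ coincides with $|T_{\beta,1/4}^{-1}x|^{-p}$ on $E_{\frac{1}{4},R_{0}}\setminus E_{\frac{1}{4},\kappa}$ and with the constant $\kappa^{-p}$ on $E_{\frac{1}{4},\kappa}$.

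Next I would replace $g$ by $\bar{g}=\phi(|T_{\beta,1/4}^{-1}x|)\in C^{\infty}(\mathbb{R}^{n})\cap L^{\infty}(\mathbb{R}^{n})$, where $\phi\in C^{\infty}([0,\infty))$ is nonincreasing and bounded with $\phi(t)=t^{-p}$ for $t\geq\kappa$ and $\phi$ constant near $0$; then $\bar{g}\geq g$ everywhere, with equality on $E_{\frac{1}{4},R_{0}}\setminus E_{\frac{1}{4},\kappa}$, and, since $\mathcal{M}^{-}$ is monotone under pointwise ordering of functions that agree at the base point and is invariant under additive constants, $\mathcal{M}^{-}\bar{g}\geq\mathcal{M}^{-}g\geq 0$ on the annulus $E_{\frac{1}{4},R_{0}}\setminus E_{\frac{1}{4},1}$. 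I then set $\Psi(x):=c_{1}\,(\bar{g}(x)-R_{0}^{-p})^{+}$ for a constant $c_{1}>0$ fixed below. Here $\Psi$ is continuous on $\mathbb{R}^{n}$, is $C^{\infty}$ on $E_{\frac{1}{4},R_{0}}$ (there $\bar{g}>R_{0}^{-p}$), and vanishes on $\mathbb{R}^{n}\setminus E_{\frac{1}{4},R_{0}}$ (there $\bar{g}\leq R_{0}^{-p}$), the only point of non-smoothness being $\partial E_{\frac{1}{4},R_{0}}$; this already gives (1) and (2). For (3), using Lemma~\ref{Fundamental Geometry} one checks $\overline{\mathcal{R}_{\frac{1}{4},3}}\subset E_{\frac{1}{4},R_{1}}$ for some $R_{1}<R_{0}$, so $\bar{g}\geq\min\{\kappa^{-p},|T_{\beta,1/4}^{-1}x|^{-p}\}\geq R_{1}^{-p}>R_{0}^{-p}$ on $\mathcal{R}_{\frac{1}{4},3}$, and the choice $c_{1}=4\,(R_{1}^{-p}-R_{0}^{-p})^{-1}$ makes $\Psi\geq 4>3$ on $\mathcal{R}_{\frac{1}{4},3}$.

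For (4) I would split $\mathbb{R}^{n}$ into three zones. On $\mathbb{R}^{n}\setminus E_{\frac{1}{4},R_{0}}$, where $\Psi\equiv 0$ while $\Psi\geq 0$ globally, one has $\delta(\Psi,x,\cdot)\geq 0$, hence $\mathcal{M}^{-}\Psi(x)\geq 0$ (the value may be $+\infty$, which is harmless). On the annulus $E_{\frac{1}{4},R_{0}}\setminus\overline{E_{\frac{1}{4},1}}$, where $\bar{g}(x)>R_{0}^{-p}$, the function $c_{1}^{-1}\Psi=(\bar{g}-R_{0}^{-p})^{+}$ equals $\bar{g}(x)-R_{0}^{-p}$ at $x$ and satisfies $c_{1}^{-1}\Psi\geq\bar{g}-R_{0}^{-p}$ on $\mathbb{R}^{n}$, so by monotonicity and additive invariance $\mathcal{M}^{-}\Psi(x)=c_{1}\mathcal{M}^{-}(c_{1}^{-1}\Psi)(x)\geq c_{1}\mathcal{M}^{-}\bar{g}(x)\geq 0$; since $\Psi$ is smooth on a neighbourhood of $\overline{E_{\frac{1}{4},1}}$, Lemma~\ref{I is C^{1,1}} shows $\mathcal{M}^{-}\Psi$ is continuous there, so $\mathcal{M}^{-}\Psi\geq 0$ persists on $\partial E_{\frac{1}{4},1}$. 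Finally, on $E_{\frac{1}{4},1}$ the function $\Psi$ is bounded and $C^{\infty}$, so Lemma~\ref{I is C^{1,1}} gives $\mathcal{M}^{-}\Psi$ continuous on a neighbourhood of $\overline{E_{\frac{1}{4},1}}$ and hence bounded below there, and it is $\geq 0$ on $\partial E_{\frac{1}{4},1}$; consequently $h:=\max\{-\mathcal{M}^{-}\Psi,0\}$ belongs to $C_{0}(E_{\frac{1}{4},1})$, and $\varphi:=h+\psi_{0}$, with any $\psi_{0}\in C_{0}(E_{\frac{1}{4},1})$ strictly positive in $E_{\frac{1}{4},1}$, is a positive element of $C_{0}(E_{\frac{1}{4},1})$ with $\mathcal{M}^{-}\Psi>-\varphi$ in $E_{\frac{1}{4},1}$ and $\mathcal{M}^{-}\Psi\geq 0=-\varphi$ outside; the strict inequality outside $E_{\frac{1}{4},1}$ is recovered by noting that the barrier of Corollary~\ref{Coroll Barr function 2} is in fact a \emph{strict} subsolution of $\mathcal{M}^{-}$ on the annulus, as in Lemma~\ref{Barr function 1}.

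The only genuine difficulty is the finiteness of $\mathcal{M}^{-}\Psi$, and this is exactly why the smoothing of the cap cannot be omitted: $g$ has a true gradient jump on $\partial E_{\frac{1}{4},\kappa}\subset E_{\frac{1}{4},1}$, where $\delta^{-}(g,x,\cdot)$ is only of order $|y|$ and hence not integrable against $\Vert y\Vert^{-c-s}$ once $s\geq\tfrac{2}{b_{\max}}$, so $\mathcal{M}^{-}g=-\infty$ at such points and (4) would fail throughout the range $s<\tfrac{4}{b_{\max}}$ we must treat. Replacing $g$ by $\bar{g}$ only raises the function, so $\mathcal{M}^{-}\geq 0$ on the annulus is preserved while the divergence disappears; the remaining kink of $\Psi$ on $\partial E_{\frac{1}{4},R_{0}}$ does no harm since there $\Psi\geq 0=\Psi$ forces $\delta^{-}\equiv 0$. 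Everything else is bookkeeping with the anisotropic dilation $T_{\beta,1/4}$ and the inclusions in Lemma~\ref{Fundamental Geometry}.
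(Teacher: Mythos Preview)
Your construction is correct and follows the same approach as the paper: take the rescaled power barrier from Corollary~\ref{Coroll Barr function 2}, shift it so that it vanishes on $\partial E_{\frac14,3\sqrt n}$, truncate outside, regularise the inner cap so that $\mathcal{M}^{-}\Psi$ is finite and continuous near $\overline{E_{\frac14,1}}$, and absorb any negativity of $\mathcal{M}^{-}\Psi$ in $E_{\frac14,1}$ into $\varphi$. The paper does exactly this, with the only cosmetic difference that it patches the cap on $E_{\frac14,1}$ by an anisotropic \emph{quadratic} $q_{p,\beta}$ (making $\Psi\in C^{1,1}$ across $\partial E_{\frac14,1}$) rather than by a smooth radial profile $\phi$ as you do; your choice is equally valid and arguably cleaner, since it yields $\Psi\in C^\infty$ on all of $E_{\frac14,3\sqrt n}$ and the monotonicity argument $\mathcal{M}^-\Psi\geq c_1\mathcal{M}^-\bar g\geq c_1\mathcal{M}^- g\geq 0$ on the annulus goes through without having to match derivatives by hand.
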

\begin{proof}
Consider the function $\Psi:\mathbb{R}^{n} \rightarrow \mathbb{R}$ defined by
$$
\Psi\left( x \right) = \tilde{c} \left \{ 
\begin{array}{lll}
0, & \text{ in } & \mathbb{R}^{n} \setminus E_{\frac{1}{4},3\sqrt{n}} \\
| T^{-1}_{\beta, \frac{1}{4}}x |^{-p} -  \left( 3\sqrt{n}\right)^{-p}  & \text{ in } & E_{\frac{1}{4},3\sqrt{n}} \setminus E_{\frac{1}{4},1} \\
q_{p,\beta},  & \text{ in } & E_{\frac{1}{4},1},
\end{array}
\right.
$$
where $q_{p,\beta}$ is a quadratic function with different coefficients in different directions so that $\Psi$ is $C^{1,1}$ across $E_{\frac{1}{4},1}$.  Choose $ \tilde{c}>0$ such that $\Psi > 3 $ in $\mathcal{R}_{\frac{1}{4},3}$. By Lemma \ref{I is C^{1,1}}, we get
$$\mathcal{M}^{-}\Psi \in C\left( E_{\frac{1}{4},3\sqrt{n}} \right)$$ 
and, from Corollary \ref{Coroll Barr function 2}, we find $\mathcal{M}^{-}\Psi \geq 0$ in $\mathbb{R}^{n} \setminus E_{\frac{1}{4},1}$. The lemma is proved.
\end{proof}
    
\subsection{Harnack inequality and regularity} \label{Harnack Inequality Section}

The next lemma is the fundamental tool towards the proof of the Harnack inequality. It bridges the gap between a pointwise estimate and an estimate in measure.

\begin{lemma} 
\label{Point Estimates 1} 
Let $0 < s_{0} < \frac{4}{b_{\max}}$. If $s \in \left( s_{0}, \frac{4}{b_{\max}} \right) $, then there exist constants $\varepsilon_{0} > 0$, $0 < \varsigma < 1 $, and $M > 1$, depending only $s_{0}, s$, $\lambda$, $\Lambda$, $b_{\min}$, $b_{\max}$, and $n$, such that if
\begin{enumerate}
\item $u \geq 0$ in $\mathbb{R}^{n}$;
\medskip
\item $u\left( 0 \right) \leq 1$;
\medskip
\item $\mathcal{M}^{-}u \leq \varepsilon_{0}$ in $E_{ \frac{3}{4} n^{\frac{b_{\max}}{4}} ,1}$,
\end{enumerate} 
then 
$$| \left\lbrace u \leq M \right\rbrace \cap Q_{1}| > \varsigma.$$
\end{lemma}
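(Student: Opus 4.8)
The plan is to follow the standard De Giorgi--Krylov--Safonov scheme adapted to the anisotropic geometry, exactly as in \cite[Lemma 8.5]{CS} and its counterpart in \cite{CLU}, using the anisotropic ABP estimate (Theorem \ref{ABP Nonlocal theorem}) together with the barrier $\Psi$ from Lemma \ref{Exist barr func 5}. First I would argue by contradiction: suppose that for the constants $\varepsilon_0,\varsigma,M$ to be chosen we could find $u$ satisfying (1)--(3) but with $|\{u\le M\}\cap Q_1|\le \varsigma$, i.e. $u$ is large on most of $Q_1$. The idea is to consider the function $v := \Psi - u$, where $\Psi$ is the barrier of Lemma \ref{Exist barr func 5} (rescaled/normalized so that its support ellipse $E_{1/4,3\sqrt n}$ and the set where $\Psi>3$, namely $\mathcal{R}_{1/4,3}$, sit appropriately inside $E_{\frac34 n^{b_{\max}/4},1}$ and cover $Q_1$). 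Since $\mathcal{M}^-u\le\varepsilon_0$ means $\mathcal{M}^+(-u)\ge-\varepsilon_0$, and $\mathcal{M}^-\Psi\ge-\varphi$ with $\varphi$ compactly supported in the small ellipse $E_{1/4,1}$, the subadditivity of the maximal operator (Lemma \ref{Comp. princ.}-type reasoning) gives $\mathcal{M}^+ v \ge \mathcal{M}^-\Psi - \mathcal{M}^+u \ge -\varphi - \varepsilon_0 =: -g$ in $E_{1/4,1}$, where $g$ is bounded, supported in a fixed small ellipse, with controlled $L^\infty$ norm. Also $v=\Psi-u\le 0$ outside $E_{1/4,3\sqrt n}$ because there $\Psi=0$ and $u\ge0$; and at the origin $v(0)=\Psi(0)-u(0)\ge \Psi(0)-1$, which is a fixed positive number (choosing $\tilde c$ so that $\Psi(0)>3$, hence $\sup v\ge v(0)\ge 2$).

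Next I would apply the anisotropic ABP estimate to $v$ (after an affine normalization placing us in the $B_1$-framework of Theorem \ref{ABP Nonlocal theorem}, using that $E_{1/4,3\sqrt n}$ is comparable to a fixed ball by Lemma \ref{Fundamental Geometry}). The ABP estimate produces a finite disjoint family of rectangles $\{\mathcal{R}_j\}$ covering the contact set $\{v=\Gamma\}$, and in particular, summing conclusion (6) of Theorem \ref{ABP Nonlocal theorem} over those $\mathcal{R}_j$ that intersect the support of $g$ (only finitely many, of comparable size), one controls $(\sup v)^n$ — which is bounded \emph{below} by a fixed constant — by $\sum_j (\max_{\tilde{\mathcal R}_j} g)^n |\tilde{\mathcal R}_j|$. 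Since $g = \varphi + \varepsilon_0$ with $\varphi$ fixed and $\varepsilon_0$ at our disposal, the only way this can fail to be small is through the rectangles where $v$ actually detaches quadratically from its concave envelope; but conclusion (6) says that on an enlarged rectangle $C\tilde{\mathcal R}_j$ a definite proportion $\varsigma'|\tilde{\mathcal R}_j|$ of points satisfy $u(y)\ge \Gamma(y) - C(\max g)(\tilde d_j)^2$, i.e. $v = \Psi - u \le \Psi - \Gamma + C(\max g)(\tilde d_j)^2$. Tracking constants, this forces a fixed fraction of points of a fixed-size region to lie in $\{u\le M\}$ for a suitably large universal $M$ (absorbing $\Psi_{\max}$ plus the ABP error term), hence $|\{u\le M\}\cap Q_1| > \varsigma$ for a universal $\varsigma$, contradicting our assumption. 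So the roles are: choose $p,\tilde c$ (hence $\Psi$ and $M$) first from Lemma \ref{Exist barr func 5} and the geometry, then choose $\varepsilon_0$ small so the ABP right-hand side is dominated by the contribution of $\varphi$, then read off $\varsigma$.

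The main obstacle I expect is bookkeeping the anisotropic geometry so that all the ellipses and rectangles nest correctly: one needs $\mathrm{supp}\,\Psi = E_{1/4,3\sqrt n}$ and the ABP ball $B_3$ (or its anisotropic analogue) and $Q_1$ and $E_{\frac34 n^{b_{\max}/4},1}$ to be placed so that (a) $v\le0$ outside the relevant set, (b) the contact set of $v$'s concave envelope lies where we can apply Theorem \ref{ABP Nonlocal theorem}, and (c) the rectangles $\tilde{\mathcal R}_j$ meeting $\mathrm{supp}\,g$ are comparable to $Q_1$ from below so that ``a fixed fraction of a $\tilde{\mathcal R}_j$'' translates into ``positive measure inside $Q_1$''. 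This is where Lemma \ref{Fundamental Geometry} (parts 1--3 relating $E_{r,l}$, $\Theta_r$, $R_{r,l}$, and Euclidean balls) and the choice of the exponent $\tfrac34 n^{b_{\max}/4}$ in hypothesis (3) do their work, and getting the constants to line up — in particular verifying that $\sup v$ is bounded below independently of everything and that $M$ can be taken universal — is the delicate part; the rest is a direct transcription of the classical argument.
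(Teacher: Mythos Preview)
Your approach is essentially the same as the paper's: form $v=\Psi-u$, observe $\mathcal{M}^{+}v\ge -\varphi-\varepsilon_0$, apply the anisotropic ABP (Theorem \ref{ABP Nonlocal theorem}) to $v$, use conclusion (5) together with $\sup v\ge 2$ to force $\sum_{\tilde{\mathcal R}_j\cap E_{1/4,1}\neq\emptyset}|\tilde{\mathcal R}_j|\ge c$ once $\varepsilon_0$ is small, and then use conclusion (6) plus the covering lemma to obtain the measure bound with $M:=\sup_{B_{1/2}}\Psi+C\rho_0^{4/b_{\max}}$. Two small corrections: the inequality should read $\mathcal{M}^{+}v\ge \mathcal{M}^{-}\Psi-\mathcal{M}^{-}u$ (you wrote $-\mathcal{M}^{+}u$, which hypothesis (3) does not control), and this inequality must hold on all of $E_{\frac34 n^{b_{\max}/4},1}$, not just $E_{1/4,1}$; also the contradiction framing is unnecessary, since the argument directly produces the lower bound $|\{u\le M\}\cap Q_1|>\varsigma$.
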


\begin{proof}

Let $v= \Psi - u$ and let $\Gamma$ be the concave envelope of $v$ in $E_{\frac{3}{4} n^{\frac{b_{\max}}{4}},3}$. We have 
$$
\mathcal{M}^{+}v \geq  \mathcal{M}^{-}\Psi - \mathcal{M}^{-}u \geq - \varphi - \varepsilon_{0} \quad \text{in} \ \ E_{ \frac{3}{4} n^{\frac{b_{\max}}{4}} ,1}. 
$$
Applying Theorem \ref{ABP Nonlocal theorem} to $v$ (anisotropically scaled), we obtain a family of rectangles $\mathcal{R}_{j}$ such that 
$$
 \sup \limits_{E_{\frac{3}{4} n^{\frac{b_{\max}}{4}},1}}v \leq C \left |\nabla \Gamma \left(E_{\frac{3}{4} n^{\frac{b_{\max}}{4}},1}\right) \right |^{\frac{1}{n}}. 
$$
Thus, by Theorem \ref{ABP Nonlocal theorem} and condition (3) in Lemma \ref{Exist barr func 5}, we obtain
\begin{eqnarray*}
\sup \limits_{E_{\frac{3}{4} n^{\frac{b_{\max}}{4}},1}}v &\leq & C \left |\nabla \Gamma \left(  E_{\frac{3}{4} n^{\frac{b_{\max}}{4}},1} \right) \right |^{\frac{1}{n}} \\ 
&\leq & C_{1} \left( \sum_{i=1}^{m} \left | \nabla \Gamma \left( \mathcal{R}_{j} \right) \right | \right)^{\frac{1}{n}} \\ 
&\leq &  C_{1} \left( \sum_{i=1}^{m} \left( \max_{\tilde{\mathcal{R}}_{j}} \left( \varphi + \varepsilon_{0}\right) ^{+} \right)^{n}\left | \tilde{\mathcal{R}}_{j} \right |\right)^{\frac{1}{n}}\\ 
&\leq &  C_{1}\varepsilon_{0} + \left( \sum_{i=1}^{m} \left( \max_{\tilde{\mathcal{R}}_{j}} \left( \varphi \right) ^{+} \right)^{n}\left | \tilde{\mathcal{R}}_{j} \right |\right)^{\frac{1}{n}}.
\end{eqnarray*}
Furthermore, since $\Psi > 3$ in $E_{\frac{3}{4} n^{\frac{b_{\max}}{4}},1} \supset \mathcal{R}_{\frac{1}{4},3}$ and $u\left( 0 \right) \leq 1$, we get
$$
2 \leq  C_{1}\varepsilon_{0} + \left( \sum_{i=1}^{n} \left( \max_{\tilde{\mathcal{R}}_{j}} \left( \varphi \right)^{+} \right)^{n}\left |\tilde{\mathcal{R}}_{j}\right |\right)^{\frac{1}{n}}.
$$
If $\varepsilon_{0} >0$ is small enough, we have
\begin{equation}
\label{PE 1}
c \leq  \left( \sum \limits_{\tilde{\mathcal{R}}_{j} \cap E_{\frac{1}{4},1}\neq \emptyset}  \left | \tilde{\mathcal{R}}_{j} \right |\right),
\end{equation}
where we used that $\varphi$ is supported in $ E_{\frac{1}{4},1}$. We also have that the diameter of $\tilde{\mathcal{R}}_{j}$ is bounded by $\left( \rho_{0}=\frac{1}{C}\right)^{\frac{2}{b_{\max}}}$. Then, if $\tilde{\mathcal{R}}_{j} \cap E_{\frac{1}{4},1}\neq \emptyset$ we have $C\tilde{\mathcal{R}}_{j} \subset B_{\frac{1}{2}}$. By Theorem \ref{ABP Nonlocal theorem}, we get 
\begin{eqnarray}
\label{PE 2}
& & \left | \left\lbrace y \in C\tilde{\mathcal{R}}_{j}: v\left( y \right) \geq \Gamma \left( y \right) - C\rho_{0}^{\frac{4}{b_{\max}}} \right\rbrace \right | \nonumber \\
& \geq &\left | \left\lbrace y \in C\tilde{\mathcal{R}}_{j} : v\left( y \right) \geq \Gamma \left( y \right) - Cd^{2}_{j}  \right\rbrace \right | \nonumber\\ 
& \geq & \varsigma \left |\tilde{\mathcal{R}}_{j} \right |,
\end{eqnarray}
where we used that $Cd^{2}_{j} < C\rho_{0}^{\frac{4}{b_{\max}}}$. For each rectangles $\tilde{\mathcal{R}}_{j}$ that intersects $E_{\frac{1}{4},1}$ we consider $C\tilde{\mathcal{R}_{j}}$. The family $\left\lbrace C\tilde{\mathcal{R}_{j}}\right\rbrace$ is an  open covering for $\bigcup_{i=1}^{m}\tilde{\mathcal{R}}_{j}$. We consider a subcover with finite overlapping (Lemma \ref{covering lemma}) that also covers $\bigcup_{i=1}^{m}\tilde{\mathcal{R}}_{j}$. Then, using \eqref{PE 1} and \eqref{PE 2} we obtain
\begin{eqnarray*}
\label{PE 3}
& & \left | \left\lbrace y \in B_{\frac{1}{2}} : v\left( y \right) \geq \Gamma \left( y \right) - C\rho_{0}^{\frac{4}{b_{\max}}} \right\rbrace \right | \\
& \geq & \left | \bigcup_{j=1}^{m} \left\lbrace y \in  C\tilde{\mathcal{R}}_{j} : v\left( y \right) \geq \Gamma \left( y \right) - C\rho_{0}^{\frac{4}{b_{\max}}} \right\rbrace \right | \\ 
& \geq & C_{1}\sum_{j=1}^{m}\left | \left\lbrace y \in  C\tilde{\mathcal{R}}_{j} : v\left( y \right) \geq \Gamma \left( y \right) - C\rho_{0}^{\frac{4}{b_{\max}}} \right\rbrace \right |\\ 
& \geq & C_{1}c_{1}.
\end{eqnarray*}
We recall that $B_{\frac{1}{2}} \subset Q_{1}$ and $\Gamma \geq 0$. Hence, if $M:= \sup \limits_{B_{\frac{1}{2}}}\Psi + C\rho_{0}^{\frac{4}{b_{\max}}}$, we have
\begin{eqnarray*}
\label{PE 4}
\left | \left\lbrace y \in Q_{1} : u\left( y \right) \leq M \right\rbrace \right | & \geq & \left | \left\lbrace y \in B_{\frac{1}{2}} : u\left( y \right) \leq M  \right\rbrace \right | \\ 
& \geq &\left | \left\lbrace y \in B_{\frac{1}{2}} : v\left( y \right) \geq \Gamma \left( y \right) - C\rho_{0}^{\frac{4}{b_{\max}}} \right\rbrace \right | \\ 
& \geq & c.
\end{eqnarray*}
\end{proof}

The next lemma is fundamental to iterate Lemma \ref{Point Estimates 1} and to get the $L_{\varepsilon}$ decay of the distribution function $\lambda_{u} := \left |\left\lbrace  u > t \right\rbrace \cap B_{1} \right |$. Since our scaling is anisotropic, the following Calder\'on-Zygmund decomposition is performed with $n$-dimensional rectangles that satisfy the covering lemma of Caffarelli-Calder\'on (Lemma \ref{covering lemma}). We can then apply Lebesgue's differentiation theorem having these $n$-dimensional rectangles as a differentiation basis, see Lemma 5.2 in \cite{CLU}.

\begin{lemma} 
\label{Point Estimates 2}
Let $u$ be as in Lemma \ref{Point Estimates 1}. Then
$$
\left | \left\lbrace u > M^{k} \right\rbrace \cap Q_{1} \right | \leq C\left( 1 - \varsigma \right)^{k}, \quad k=1, \dots,
$$
where $M$ and $\varsigma$ are as in Lemma \ref{Point Estimates 1}. Thus, there exist positive constants $d$ and $\varepsilon$, depending only $s_{0}, s$, $\lambda$, $\Lambda$, $b_{\min}$, $b_{\max}$, and $n$  such that
$$
| \left\lbrace u \geq t \right\rbrace \cap Q_{1}| \leq dt^{-\varepsilon}, \quad \forall t > 0.
$$
\end{lemma}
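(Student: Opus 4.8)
The plan is to follow the classical De Giorgi--Krylov--Safonov iteration, now carried out with the anisotropic rectangles $\tilde{\mathcal{R}}_j$ as the differentiation basis, exactly as in \cite[Lemma 5.3]{CLU} and \cite[Lemma 9.1]{CS}. The statement contains two assertions: first the geometric decay $|\{u > M^k\}\cap Q_1| \le C(1-\varsigma)^k$, and second the power-decay bound on the distribution function, which is a routine consequence of the first (choose $\varepsilon$ so that $(1-\varsigma) = M^{-\varepsilon}$, i.e.\ $\varepsilon = -\ln(1-\varsigma)/\ln M$, and interpolate between the dyadic levels $t \in [M^{k}, M^{k+1})$ using monotonicity of the distribution function). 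So the whole weight of the proof is on establishing the geometric decay, and I would prove it by induction on $k$.

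First I would set up the induction. The base case $k=1$ is precisely Lemma \ref{Point Estimates 1}: $|\{u \le M\}\cap Q_1| > \varsigma$, i.e.\ $|\{u > M\}\cap Q_1| < 1 - \varsigma$ (normalizing $|Q_1|=1$). For the inductive step, assume $|\{u > M^k\}\cap Q_1| \le C(1-\varsigma)^k$ and apply a Calderón--Zygmund decomposition to the set $A = \{u > M^{k+1}\}\cap Q_1$ relative to the family of anisotropic dyadic rectangles generated by the scaling $T_{\beta,r}$ — concretely, starting from $Q_1$ and at each stage subdividing each rectangle by the factor $2^{n\mathfrak{C}}$ (or the appropriate splitting built into Theorem \ref{ABP Nonlocal theorem}) into subrectangles of the form $\tilde{\mathcal{R}}$. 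The key selection: call a rectangle $\mathcal{R}$ in this family ``bad'' if $|A \cap \mathcal{R}| > (1-\varsigma)|\mathcal{R}|$; pick the maximal bad rectangles $\{\mathcal{R}_j\}$. By the Lebesgue differentiation theorem for these $n$-dimensional rectangles (valid because they satisfy the Caffarelli--Calderón condition, see Lemma 5.2 in \cite{CLU}), we have $A \subset \bigcup_j \mathcal{R}_j$ up to a null set, and by maximality the predecessor $\hat{\mathcal{R}}_j$ of each $\mathcal{R}_j$ satisfies $|A\cap \hat{\mathcal{R}}_j| \le (1-\varsigma)|\hat{\mathcal{R}}_j|$.

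The crucial claim is then that each selected rectangle $\mathcal{R}_j$ must satisfy $\hat{\mathcal{R}}_j \subset \{u > M^k\}\cap Q_1$. Suppose not: then there is a point $\bar{x} \in \hat{\mathcal{R}}_j$ with $u(\bar{x}) \le M^k$. Consider the anisotropically rescaled function $\tilde{u}(x) = u(T_{\beta,\bar r}x + \bar x)/M^k$ where $\bar r$ is the scale of $\hat{\mathcal{R}}_j$; then $\tilde u \ge 0$ in $\mathbb{R}^n$, $\tilde u(0) \le 1$, and by the scaling invariance of the minimal operator (the $q_{\max,s}$ normalization in \eqref{Kernel cond 2} is exactly what makes $\mathcal{M}^- (u\circ T_{\beta,\bar r})$ scale correctly, as used throughout Section \ref{H Regularity: smooth solutions}) we get $\mathcal{M}^-\tilde u \le \varepsilon_0$ on the appropriate large ellipse; note $\bar r$ is uniformly bounded below by a fixed power of $2^{-\mathfrak{C}}$ times $\rho_0$ relative to $Q_1$, so the rescaled domain still contains $E_{\frac34 n^{b_{\max}/4},1}$ — this is the place where the particular normalization of the covering in Theorem \ref{ABP Nonlocal theorem} has to be checked to dovetail with the domain in Lemma \ref{Point Estimates 1}. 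Then Lemma \ref{Point Estimates 1} gives $|\{\tilde u > M\}\cap Q_1| < 1-\varsigma$, which transfers back to $|\{u > M^{k+1}\}\cap \hat{\mathcal{R}}_j| < (1-\varsigma)|\hat{\mathcal{R}}_j|$, i.e.\ $|A \cap \hat{\mathcal{R}}_j| < (1-\varsigma)|\hat{\mathcal{R}}_j|$ — consistent with maximality but we actually need more: applied to the child $\mathcal{R}_j$ itself this would contradict its badness (since $\mathcal{R}_j \subset \hat{\mathcal{R}}_j$ forces $|A\cap\mathcal{R}_j| \le |A\cap\hat{\mathcal{R}}_j| < (1-\varsigma)|\hat{\mathcal{R}}_j|$, but the ratio comparison requires the bounded-ratio property $|\hat{\mathcal{R}}_j| \le C|\mathcal{R}_j|$ coming from the fixed subdivision factor, combined with choosing the measure-density threshold in the CZ stopping time slightly more carefully). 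Granting the claim, we sum: $|A| = \sum_j |A\cap\mathcal{R}_j| \le (1-\varsigma)\sum_j|\mathcal{R}_j| \le (1-\varsigma)\cdot C\cdot|\{u>M^k\}\cap Q_1| \le C(1-\varsigma)^{k+1}$, where the middle inequality uses disjointness of the maximal $\mathcal{R}_j$ and their containment in $\{u>M^k\}\cap Q_1$.

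I expect the main obstacle to be the bookkeeping that makes the rescaling argument in the inductive step actually land inside the hypotheses of Lemma \ref{Point Estimates 1}: one must verify that when a selected rectangle $\hat{\mathcal{R}}_j$ is rescaled to unit size by $T_{\beta,\bar r}$, the set on which we know $\mathcal{M}^- u \le \varepsilon_0$ (namely $E_{\frac34 n^{b_{\max}/4},1}$) rescales to contain the corresponding set for $\tilde u$ — this is why the original ellipse in Lemma \ref{Point Estimates 1} was taken with that specific dilated radius $\frac34 n^{b_{\max}/4}$ rather than something smaller. The anisotropy means this is not the one-line scaling check it would be in \cite{CS}; one uses Lemma \ref{Fundamental Geometry}(2)--(3) and (5) to chase the inclusions $R_{r,l} \subset E_{(rl)c_{\max},1}$ and $E^{\max}_{r,l}\subset E^{\max}_{rl,1}$ and keep all the constants consistent. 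The second (minor) obstacle is making sure the Calderón--Zygmund family genuinely admits Lebesgue differentiation: this is where one cites that the rectangles satisfy the Caffarelli--Calderón condition and invokes Lemma 5.2 of \cite{CLU}; once that is in hand the argument is formally identical to the isotropic one.
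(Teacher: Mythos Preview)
Your proposal is correct and follows exactly the approach the paper indicates: the paper does not actually write out a proof of this lemma but states it with the remark that the Calder\'on--Zygmund decomposition is to be performed with the anisotropic dyadic rectangles satisfying the Caffarelli--Calder\'on condition, invoking Lemma 5.2 of \cite{CLU} for the Lebesgue differentiation step. Your induction via the CZ stopping-time argument, the rescaling to reapply Lemma \ref{Point Estimates 1}, and your identification of the two genuine bookkeeping points (the anisotropic rescaling compatibility with the domain $E_{\frac{3}{4}n^{b_{\max}/4},1}$, and the validity of Lebesgue differentiation for this basis) match precisely what the paper has in mind.
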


Using standard covering arguments we get the following theorem.
\begin{theorem}
\label{Point Estimates 3}
Let $u \geq 0$ in $\mathbb{R}^{n}$, $u\left( 0 \right) \leq 1$ and $\mathcal{M}^{-}u \leq \varepsilon_{0}$ in $B_{2}$. Suppose that $ s_{0} < s < \frac{4}{b_{\max}}$ for some $s_{0} >0$. Then
$$
| \left\lbrace u \geq t \right\rbrace \cap B_{1}| \leq Ct^{-\varepsilon}, \quad \forall t > 0,
$$
where $C=C\left( n, \lambda, \Lambda, b_{\max}, b_{\min}, s_{0}, s  \right) >0$ and $\varepsilon=\varepsilon\left( n, \lambda, \Lambda, b_{\max}, b_{\min}, s_{0}, s \right) >0$.
\end{theorem}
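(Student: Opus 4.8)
The plan is to deduce Theorem \ref{Point Estimates 3} from Lemma \ref{Point Estimates 2} (and, at one step, from the point estimate Lemma \ref{Point Estimates 1}) by a rescaling that shrinks the hypothesis region $E_{\frac{3}{4}n^{b_{\max}/4},1}$ of Lemma \ref{Point Estimates 2} until it fits inside $B_2$, followed by a finite covering of $B_1$. First I would dispose of small $t$: for $0<t\le t_0$, with $t_0=t_0(n,\lambda,\Lambda,b_{\min},b_{\max},s_0,s)$ a constant to be fixed, one has trivially $|\{u\ge t\}\cap B_1|\le |B_1|\le |B_1|\,t_0^{\varepsilon}\,t^{-\varepsilon}$, so it suffices to treat $t\ge t_0$ and enlarge $C$ accordingly.

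Next I would set up the rescaling. Fix $\rho=\rho(n,b_{\min},b_{\max})\in(0,1)$ so small that $E_{\rho\frac34 n^{b_{\max}/4},1}\subset B_1$; this is legitimate because, by Lemma \ref{Fundamental Geometry}(5), $T_{\beta,\rho}\big(E_{R,1}\big)=E_{\rho R,1}$ and the semi-axes $(\rho R)^{2/b_i}$ of $E_{\rho R,1}$ tend to $0$ as $\rho\to 0$. For $x_0\in B_1$ put $v(x):=u\big(x_0+T_{\beta,\rho}x\big)$. A change of variables based on the identities $\|T_{\beta,\rho}^{-1}z\|=\rho^{-1}\|z\|$ and $|\det T_{\beta,\rho}|=\rho^{c}$ shows that every $L\in\mathfrak{L}_0$ is carried to $\rho^{s}$ times an operator of $\mathfrak{L}_0$ evaluated at $x_0+T_{\beta,\rho}x$, whence
$$
\mathcal{M}^{-}v(x)=\rho^{s}\,(\mathcal{M}^{-}u)\big(x_0+T_{\beta,\rho}x\big).
$$
Since $\rho^{s}<1$ and $x_0+T_{\beta,\rho}\big(E_{\frac34 n^{b_{\max}/4},1}\big)\subset B_2$ whenever $x_0\in B_1$, the hypothesis $\mathcal{M}^{-}u\le\varepsilon_0$ in $B_2$ transfers to $\mathcal{M}^{-}v\le\varepsilon_0$ in $E_{\frac34 n^{b_{\max}/4},1}$, and $v\ge 0$ in $\mathbb{R}^n$.

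Then I would cover $\overline{B_1}$ by a finite, dimension-controlled family of translates $x_j+T_{\beta,\rho}(Q_1)$, $j=1,\dots,N=N(n,b_{\min},b_{\max})$, with centers $x_j\in B_1$ (this is the ``standard covering argument'', which one may also obtain from Lemma \ref{covering lemma}). On each such rectangle one runs the Calder\'on--Zygmund mechanism underlying Lemma \ref{Point Estimates 2} for the rescaled function $v_j(x):=u\big(x_j+T_{\beta,\rho}x\big)$, which yields $|\{v_j\ge t\}\cap Q_1|\le d\,t^{-\varepsilon}$ for $t\ge t_0$; scaling back with Jacobian $\rho^{c}$ gives $|\{u\ge t\}\cap(x_j+T_{\beta,\rho}(Q_1))|\le d\rho^{c}\,t^{-\varepsilon}$, and summing over $j$ produces $|\{u\ge t\}\cap B_1|\le Nd\rho^{c}\,t^{-\varepsilon}=:C\,t^{-\varepsilon}$, with $C$ and $\varepsilon$ depending only on $n,\lambda,\Lambda,b_{\min},b_{\max},s_0,s$.

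The hard part is the normalization: Lemma \ref{Point Estimates 2} (inheriting it from Lemma \ref{Point Estimates 1}) assumes $u(0)\le 1$, a condition at a single point, so after translating to a center $x_j\neq 0$ there is no pointwise bound on $v_j$ at the origin and Lemma \ref{Point Estimates 2} cannot be quoted verbatim. The remedy I would carry out is to re-run its proof directly over the domain $B_1$ — tiled by the $n$-dimensional rectangles generated by $T_{\beta,\rho}$, which satisfy the Caffarelli--Calder\'on condition of Lemma \ref{covering lemma} and therefore support the Lebesgue differentiation theorem used in that argument — so that the point estimate Lemma \ref{Point Estimates 1} is invoked at dyadic sub-rectangles through its contrapositive; then the value of $u$ at the center of each sub-rectangle is produced by the iteration itself and is never assumed, while the single global input $u(0)\le 1$ only anchors the first step of the iteration. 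With this in hand, the finite covering above together with the trivial small-$t$ case completes the proof.
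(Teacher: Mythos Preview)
The paper's own proof consists of the single sentence ``Using standard covering arguments we get the following theorem,'' so there is essentially nothing to compare against beyond the intent. Your proposal is a correct and considerably more detailed execution of exactly that intent: the anisotropic rescaling to fit the hypothesis ellipsoid of Lemma~\ref{Point Estimates 2} inside $B_{2}$, the finite covering, and---crucially---your observation that the normalization $u(0)\le 1$ only anchors the base step of the Calder\'on--Zygmund iteration while subsequent point bounds are produced by the iteration itself, is precisely the standard mechanism underlying this passage.
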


\begin{remark}
For each $l > 0$, we will denote $E^{j}_{r,l}:= E_{r^{b_{j}},l}$. Let $u \geq 0$ in $\mathbb{R}^{n}$ and $\mathcal{M}^{-}u \leq C_{0}$ in $E^{j}_{r,2}$, with $0< r \leq 1$. We consider the anisotropic scaling 
$$
v\left( x \right) = \dfrac{u\left( T_{j,\beta, r}x\right) }{u\left( 0 \right) + C_{0}r^{s \frac{b_{j}}{2}} }, \quad x \in \mathbb{R}^{n}, 
$$
where $T_{j,\beta, r}:\mathbb{R}^{n}\rightarrow \mathbb{R}^{n}$ is defined by 
$$
T_{j,\beta, r}e_{i}:= \left \{ 
\begin{array}{lll}
r^{} e_{j}  , & \text{ for } & i=j \\
r^{\frac{b_{j}}{b_{i}}}e_{i}, & \text{ for } & i \neq j. 
\end{array}
\right.
$$
We find $v \geq 0$ in $\mathbb{R}^{n}$, $v\left( 0 \right) \leq 1$ and $T_{j, \beta, r}\left( B_{2}\right) = E^{j}_{r,2}$. Moreover, changing variables, we estimate 
$$\mathcal{M}^{-}v\left(  x \right)  =   \dfrac{r^{s \frac{b_{j}}{2}}}{u\left( 0 \right) + C_{0}r^{s \frac{b_{j}}{2}} } \mathcal{M}^{-}u\left( T_{j, \beta, r}x \right)  \leq  1,$$
for all $x \in B_{2}$.
\end{remark}
Then, using the anisotropic scaling $T_{j, \beta,  r}$ and Theorem \ref{Point Estimates 3} we have the following scaled version.
\begin{theorem} [Pointwise Estimate]
\label{Point Estimates}
Let $u \geq 0$ in $\mathbb{R}^{n}$ and $\mathcal{M}^{-}u \leq C_{0}$ in $E^{j}_{r,2}$. Suppose that $ s_{0} < s < \frac{4}{b_{\max}}$ for some $s_{0} >0$. Then
$$
| \left\lbrace u \geq t \right\rbrace \cap E^{j}_{r,1}| \leq \nonumber C|E^{j}_{r,1}|\left( u\left( 0 \right) + C_{0}r^{s \frac{b_{j}}{2}} \right)^{\varepsilon} t^{-\varepsilon}  \quad \forall t > 0,
$$
where $C=C\left( n, \lambda, \Lambda, b_{\min}, b_{\max}, s_{0}, s \right) >0$ and $\varepsilon=\varepsilon\left( n, \lambda, \Lambda,  b_{\min}, b_{\max}, s_{0}, s \right) >0$.
\end{theorem}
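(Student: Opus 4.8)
The plan is to deduce Theorem~\ref{Point Estimates} from Theorem~\ref{Point Estimates 3} by means of the anisotropic rescaling $T_{j,\beta,r}$ introduced in the preceding Remark. Write $A := u(0) + C_{0}r^{s b_{j}/2}$ and set $v(x) := u(T_{j,\beta,r}x)/A$ for $x \in \mathbb{R}^{n}$. As recorded in the Remark, $T_{j,\beta,r}(B_{2}) = E^{j}_{r,2}$ and $T_{j,\beta,r}(B_{1}) = E^{j}_{r,1}$, while the change of variables in the integral defining $\mathcal{M}^{-}$ (which produces exactly the weight factor $r^{s b_{j}/2}$ in front), together with $\mathcal{M}^{-}u \leq C_{0}$ on $E^{j}_{r,2}$, gives $v \geq 0$ in $\mathbb{R}^{n}$, $v(0) \leq 1$, and $\mathcal{M}^{-}v \leq 1$ in $B_{2}$.

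Since Theorem~\ref{Point Estimates 3} is stated with the smallness $\mathcal{M}^{-}(\cdot) \leq \varepsilon_{0}$ rather than $\leq 1$, I would next pass to $w := \varepsilon_{0}v$, where without loss of generality $\varepsilon_{0} \leq 1$. By the positive $1$-homogeneity of $\mathcal{M}^{-}$ (each $L_{\alpha\beta}$ is linear, so $\mathcal{M}^{-}(cw)=c\,\mathcal{M}^{-}w$ for $c>0$), we get $\mathcal{M}^{-}w \leq \varepsilon_{0}$ in $B_{2}$ and $w(0) \leq \varepsilon_{0} \leq 1$. Hence Theorem~\ref{Point Estimates 3} applies to $w$ and yields $|\{w \geq \tau\}\cap B_{1}| \leq C\tau^{-\varepsilon}$ for all $\tau>0$, which rewrites as $|\{v \geq \sigma\}\cap B_{1}| \leq C\varepsilon_{0}^{-\varepsilon}\sigma^{-\varepsilon}$ for all $\sigma>0$.

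It remains to undo the spatial change of variables. From $u(z) = A\,v(T_{j,\beta,r}^{-1}z)$ we obtain the set identity $\{u \geq t\}\cap E^{j}_{r,1} = T_{j,\beta,r}\bigl(\{v \geq t/A\}\cap B_{1}\bigr)$, so by linearity of $T_{j,\beta,r}$ and the previous estimate, $|\{u \geq t\}\cap E^{j}_{r,1}| = |\det T_{j,\beta,r}|\,|\{v \geq t/A\}\cap B_{1}| \leq |\det T_{j,\beta,r}|\,C\varepsilon_{0}^{-\varepsilon}A^{\varepsilon}t^{-\varepsilon}$. Using $|\det T_{j,\beta,r}| = |T_{j,\beta,r}(B_{1})|/|B_{1}| = |E^{j}_{r,1}|/|B_{1}|$ and absorbing $\varepsilon_{0}^{-\varepsilon}/|B_{1}|$ into the constant, this is precisely $|\{u \geq t\}\cap E^{j}_{r,1}| \leq C|E^{j}_{r,1}|\,(u(0)+C_{0}r^{s b_{j}/2})^{\varepsilon}\,t^{-\varepsilon}$, which is the claim.

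The whole argument is essentially bookkeeping; the one point that genuinely needs care — and which is already settled in the Remark — is verifying that conjugation by $T_{j,\beta,r}$ sends $\mathcal{M}^{-}$ to $r^{s b_{j}/2}\mathcal{M}^{-}$, i.e.\ that the normalization $q_{\max,s}$, the Jacobian $|\det T_{j,\beta,r}|$, and the anisotropic weight $\Vert\cdot\Vert^{c+s}$ transform compatibly under the scaling. Granting that identity, the distribution-function manipulation and the tracking of the factors $|\det T_{j,\beta,r}|$ and $A^{\varepsilon}$ are routine, so I do not expect any real obstacle here beyond careful choice of the scaling so that the two normalizations $v(0)\le 1$ and $\mathcal{M}^{-}v\le 1$ can both be arranged simultaneously.
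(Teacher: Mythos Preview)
Your proof is correct and follows the same approach as the paper, which simply states that the theorem follows from Theorem~\ref{Point Estimates 3} via the anisotropic scaling $T_{j,\beta,r}$ described in the preceding Remark. You have in fact filled in a small detail the paper glosses over: the Remark only yields $\mathcal{M}^{-}v\le 1$, whereas Theorem~\ref{Point Estimates 3} is stated with the threshold $\varepsilon_{0}$; your additional rescaling $w=\varepsilon_{0}v$ handles this cleanly.
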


We are now ready to prove the Harnack inequality.

\begin{theorem}[Harnack Inequality]
\label{Harnack Inequality}
Let $u \geq 0$ in $\mathbb{R}^{n}$, $\mathcal{M}^{-}u \leq C_{0}$, and $\mathcal{M}^{+}u \geq -C_{0}$ in $B_{2}$. Suppose that $ s_{0} < s < \frac{4}{b_{\max}}$, for some $s_{0} >0$. Then 
$$u \leq C \left( u\left( 0 \right) + C_{0}  \right) \quad  in \ \ B_{\frac{1}{2}}.$$
\end{theorem}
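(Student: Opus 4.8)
The plan is to deduce Theorem~\ref{Harnack Inequality} from the pointwise ($L^{\varepsilon}$) estimates of Theorems~\ref{Point Estimates 3} and \ref{Point Estimates}, the barrier $\Psi$ of Lemma~\ref{Exist barr func 5}, and the nonlocal anisotropic ABP estimate of Theorem~\ref{ABP Nonlocal theorem}, by transcribing the Harnack argument of \cite{CS} and \cite{CLU} into the anisotropic scalings $T_{\beta,r}$ and $T_{j,\beta,r}$. As a first reduction, using the translation invariance of $\mathfrak{L}_{0}$ and the behaviour of $\mathcal{M}^{\pm}$ under $T_{j,\beta,r}$ (the Remark preceding Theorem~\ref{Point Estimates}), one divides $u$ by $u(0)+C_{0}\varepsilon_{0}^{-1}$; it then suffices to show that if $u\geq 0$ in $\mathbb{R}^{n}$, $u(0)\leq 1$, $\mathcal{M}^{-}u\leq\varepsilon_{0}$ and $\mathcal{M}^{+}u\geq -\varepsilon_{0}$ in $B_{2}$, then $\sup_{B_{1/2}}u\leq C^{\ast}$ for a universal constant $C^{\ast}$.

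The inequality splits into two halves. The \emph{weak Harnack half} is essentially free: Theorem~\ref{Point Estimates 3}, which uses only $\mathcal{M}^{-}u\leq\varepsilon_{0}$, gives $|\{u\geq t\}\cap B_{1}|\leq C\,t^{-\varepsilon}$ for all $t>0$; integrating the distribution function yields $\Vert u\Vert_{L^{\varepsilon'}(B_{1})}\leq C$ for every $0<\varepsilon'<\varepsilon$, and, together with $u\geq 0$ and $\mathcal{M}^{-}u(0)\leq\varepsilon_{0}$, a bound on the tail $\int_{\mathbb{R}^{n}\setminus B_{1}}u(y)\Vert y\Vert^{-(c+s)}\,dy$; so $u$ is large only on a set of small measure, both in $B_{1}$ and at infinity. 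The \emph{local maximum half} is where $\mathcal{M}^{+}u\geq -\varepsilon_{0}$ enters. Let $A:=\sup_{\overline{B_{3/4}}}u$, finite and attained at some $x^{\ast}$ because $u$ is upper semicontinuous, and suppose $A$ is large. One would apply Theorem~\ref{ABP Nonlocal theorem} to a localization of $A-u$ near $x^{\ast}$, concretely to $w:=(\hat u-\tfrac{A}{2})^{+}$, where $\hat u$ coincides with $u$ on $B_{3/4}$ and equals $\min(u,\tfrac{A}{2})$ outside: then $w\geq 0$ on $\mathbb{R}^{n}$, $w$ vanishes outside $B_{3/4}$, $w(x^{\ast})=\tfrac{A}{2}$, and (using that $(u-\tfrac{A}{2})^{+}$ is a subsolution of $\mathcal{M}^{+}$ where positive and that the truncation perturbs $\mathcal{M}^{+}$ only by a tail term controlled by the weak Harnack half) $\mathcal{M}^{+}w\geq -C$ in $B_{1}$ with $C$ universal, so $w$ is admissible for the ABP. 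Since $\{w>0\}\subset\{u>\tfrac{A}{2}\}\cap B_{3/4}$ has measure $\leq C\,A^{-\varepsilon'}$ by the first half, one wants to conclude, assembling property~(5) of Theorem~\ref{ABP Nonlocal theorem}, Corollary~\ref{ABP cor 2}, and the bounded-overlap covering Lemma~\ref{covering lemma}, that $\bigl(\tfrac{A}{2}\bigr)^{n}\leq(\sup w)^{n}\leq C\,A^{-\varepsilon'}$, whence $A^{n+\varepsilon'}\leq C$ and $\sup_{B_{1/2}}u\leq C^{\ast}$. When $A$ is so large that $\Psi$ itself must be rescaled, one iterates this comparison on a geometric sequence of anisotropic ellipses $E^{j}_{r_{k},1}$ with $r_{k}$ shrinking geometrically, using $\sum_{k}\mathrm{diam}\,E^{j}_{r_{k},1}<\infty$ to keep the contact points inside $B_{3/4}$; the hypothesis $0<s<4/b_{\max}$ enters here exactly through the convergence and non-degeneracy of the geometric series, as in Lemmas~\ref{Barrier function} and \ref{cov. 1 lemma}.

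The analytic engine --- ABP, the barrier $\Psi$, the Caffarelli--Calder\'on covering --- is already in place, so the work is entirely anisotropic bookkeeping, and I expect the main obstacle to be the following. First, because the enlarged rectangles $\tilde{\mathcal{R}}_{j}$ of Theorem~\ref{ABP Nonlocal theorem} are genuinely larger than $\mathcal{R}_{j}$ once the $b_{i}$ differ, the step that converts the ABP output into a measure estimate on the set where $u$ detaches from its concave envelope cannot be done by a naive volume comparison and must be argued through the precise structure of Theorem~\ref{ABP Nonlocal theorem} together with the covering, so that $(\sup w)^{n}$ really is controlled by $|\{w>0\}|$ up to a universal constant. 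Second, the truncation and tail estimates needed to localize the genuinely nonlocal operators $\mathcal{M}^{\pm}$ must be carried out with the full anisotropic kernel $\Vert\cdot\Vert^{-(c+s)}$ and matched to the scales $r_{k}$, so that the perturbation of $\mathcal{M}^{\pm}u$ caused by the truncation stays under control and the iteration closes up with constants uniform in $k$. Everything else is a careful re-reading of \cite{CS} and \cite{CLU} in the anisotropic geometry recorded in Lemma~\ref{Fundamental Geometry}.
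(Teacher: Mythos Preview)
Your plan diverges from the paper's proof in a structural way, and the divergence is exactly where your argument breaks.

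The paper does \emph{not} re-invoke the ABP machinery (Theorem~\ref{ABP Nonlocal theorem}) in the Harnack proof. Instead it runs the Caffarelli--Silvestre touching argument: take the least $t>0$ with $u\le f_t(x)=t(1-|x|)^{-\tau}$ in $B_1$, let $x_0$ be a contact point, set $d=1-|x_0|$, $r=d/2$. The $L^\varepsilon$ estimate (Theorem~\ref{Point Estimates 3}) applied to $u$ bounds $\bigl|\{u>u(x_0)/2\}\cap E^{\max}_{r,1}(x_0)\bigr|$ from above by $C\,t^{-\varepsilon}|E^{\max}_{r,1}|$. One then applies Theorem~\ref{Point Estimates} to $w=v^+$ with $v=(1-\theta/2)^{-\tau}u(x_0)-u$ on a small ellipse $E^{\max}_{r\theta/(2n),1}(x_0)$, after showing $\mathcal{M}^- w$ is bounded there; choosing $\theta$ small and then $t$ large makes the two measure estimates incompatible. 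The ABP and the barrier $\Psi$ are used only upstream, to produce Lemma~\ref{Point Estimates 1} and hence Theorems~\ref{Point Estimates 3}--\ref{Point Estimates}.

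Your alternative route --- prove a local maximum principle by applying ABP directly to $w=(\hat u-A/2)^+$ and then feed in $|\{w>0\}|\le CA^{-\varepsilon'}$ --- has two genuine gaps.

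\emph{Tail control.} You assert that $\mathcal{M}^+w\ge -C$ with $C$ universal because the truncation error is ``controlled by the weak Harnack half''. But the truncation error at a contact point $x$ is of the form $\int_{\{u>A/2\}\setminus B_{3/4}}(u(y)-A/2)\,\Vert y-x\Vert^{-(c+s)}\,dy$, and the bound $|\{u>t\}|\le Ct^{-\varepsilon}$ with $\varepsilon$ small does \emph{not} control this integral: if $u\equiv M$ on a set of measure $M^{-\varepsilon}$ near $\partial B_{3/4}$, the tail is of order $M^{1-\varepsilon}\to\infty$. The paper obtains the needed tail bound differently: it finds a point $x_1\in B_{1/4}$ where $u$ is touched from \emph{below} by $\iota(1-|4x|^2)$, so that $\mathcal{M}^- u(x_1)\le 1$ holds classically and yields $\int(u(x_1+y)-2)^+\Vert y\Vert^{-(c+s)}\,dy\le C$. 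You have no analogue of this step.

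\emph{From ABP to a support bound.} Even granting a universal right-hand side, Theorem~\ref{ABP Nonlocal theorem} gives $(\sup w)^n\le C\sum_j|\tilde{\mathcal{R}}_j|$, not $(\sup w)^n\le C\,|\{w>0\}|$. In the genuinely anisotropic case the ratio $|\tilde{\mathcal{R}}_j|/|\mathcal{R}_j|$ grows with the dyadic depth $k_0$ (since $c\,b_{\min}/2<n$ once the $b_i$ differ), so the disjointness of the $\mathcal{R}_j$ does not bound $\sum_j|\tilde{\mathcal{R}}_j|$; and the $\tilde{\mathcal{R}}_j$ are in no way confined to $\{w>0\}$. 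You flag this as ``the main obstacle'' but do not resolve it, and I do not see how it can be resolved with the tools at hand. This is precisely why the paper (and \cite{CS}, \cite{CLU}) avoids a free-standing local maximum principle and instead runs the $f_t$-touching argument, where both measure estimates come from the \emph{same} Theorem~\ref{Point Estimates} applied at matched anisotropic scales.
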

\begin{proof}
Without loss of generality, we can suppose that $u\left( 0 \right) \leq 1$ and $C_{0} = 1$. Let 
$$\tau = \frac{c b_{\max}}{2 \varepsilon}, $$ 
where $\varepsilon > 0$ is as in Theorem \ref{Point Estimates 3}. For each $\vartheta >0$, we define the function
$$
f_{\vartheta}\left( x \right) := \vartheta \left( 1 - | x | \right)^{-\tau}, \quad x \in B_{1}. 
$$
Let $t>0$ be such that $u \leq f_{t}$ in $B_{1}$. There is an $x_{0} \in B_{1}$ such that $u\left( x_{0} \right) = f_{t}\left( x_{0} \right)$. Let $d := \left( 1 - | x_{0} | \right)$ be the distance from $x_{0}$ to $\partial B_{1}$. 
  
We will estimate the portion of the ellipsoid $E^{\max}_{r, 1}\left( x_{0}\right)$ covered by $\left\lbrace u > \frac{u\left( x_{0} \right) }{2}\right\rbrace $ and by $\left\lbrace u < \frac{u\left( x_{0} \right) }{2}\right\rbrace $. As in \cite{CS}, we will prove that $t>0$ cannot be too large. Thus, since $\tau \leq \dfrac{C(n, b_{\min}, b_{\max})}{\varepsilon}$, we conclude the proof of the theorem. By Theorem \ref{Point Estimates 3}, we have
$$
\left | \left\lbrace u > \frac{u\left( x_{0} \right) }{2}\right\rbrace \cap B_{1} \right | \leq C  \left | \dfrac{2}{u\left( x_{0} \right) }\right |^{\varepsilon} = C t^{-\varepsilon}d^{n} \leq C_{1}  t^{-\varepsilon} r^{\frac{c b_{\max}}{2}}, 
$$
where $r=\frac{d}{2}$. Thus, we get  
\begin{equation}
\label{Harnack Inequality Estimate 1}
\left | \left\lbrace u > \frac{u\left( x_{0} \right) }{2}\right\rbrace \cap E^{\max}_{r, 1}\left(  x_{0} \right)  \right | \leq  C_{1}  t^{-\varepsilon}|E^{\max}_{r, 1}|. 
\end{equation} 
Now we will estimate $\left | \left\lbrace u > \frac{u\left( x_{0} \right) }{2}\right\rbrace \cap E^{\max}_{\theta r, 1}\left( x_{0} \right)  \right |$, where $0 < \theta < 1$. Since 
$$
| x | \leq  | x - x_{0} | + | x_{0} |,  \quad \forall x \in \mathbb{R}^{n},
$$
we have
$$\left( 1 - | x | \right) \geq \left[ d -  \frac{d\theta}{2} \right],$$
for $x \in B_{r \theta }\left( x_{0} \right)$. Hence, if $x \in B_{r \theta }\left( x_{0} \right)$, we get
$$u\left( x \right) \leq \nonumber f_{t}\left( x\right) \leq t \left( 1 - | x | \right)^{-\tau} \leq u\left( x_{0} \right) \left(1 - \frac{\theta}{2} \right)^{-\tau}. $$
Then, since $\mathcal{M}^{+}u \geq - 1$, the function
$$v\left( x \right) =  \left( 1 - \frac{\theta}{2} \right)^{-\tau}u\left( x_{0} \right) - u\left( x \right) $$ 
satisfies
$$v \geq 0 \quad \text{in} \ B_{r \theta }\left( x_{0} \right)  \quad \text{and} \quad \mathcal{M}^{-}v \leq 1.$$
We will consider the function $w:=v^{+}$. For $x \in \mathbb{R}^{n}$ we have
$$\mathcal{M}^{-}w\left( x \right) = \mathcal{M}^{-}v\left( x \right) + \left( \mathcal{M}^{-}w\left( x \right) - \mathcal{M}^{-}v\left( x \right) \right)$$
and
\begin{eqnarray*}
\dfrac{ \mathcal{M}^{-}w\left( x \right) - \mathcal{M}^{-}v\left( x \right)}{q_{\max, s}} & = &  \lambda \int_{\mathbb{R}^{n}} \dfrac{\delta^{+}\left( w, x, y\right) - \delta^{+}\left( v, x, y\right) }{\Vert y \Vert^{c+s}}dy \\ 
&  & + \Lambda \int_{\mathbb{R}^{n}} \dfrac{\delta^{-}\left( v, x, y\right) - \delta^{-}\left( w, x, y\right) }{\Vert y \Vert^{c+s}}dy \\ 
& = & I_{1} + I_{2},
\end{eqnarray*}
where $I_{1}$ and $I_{2}$ represent the two terms in the right-hand side above. Using the elementary equality
$$v^{+}\left( x + y\right) = v\left( x+y\right) + v^{-}\left( x +y \right),$$
and denoting $\delta_{w}:=\delta\left( w, x, y\right)$ and $\delta_{v}:=\delta\left( v, x, y\right)$, we obtain
$$\delta^{+}_{w} = \delta_{v} + v^{-}\left( x - y \right) + v^{-}\left( x + y \right).$$
Thus, taking in account that 
$$\delta_{w}^{+} \geq \delta_{v}^{+} \quad \text{and} \quad \delta_{v} = \delta_{v}^{+} - \delta_{v}^{-},$$
we estimate
\begin{eqnarray}
\label{Harnack Inequality Estimate 6}
I_{1} & = & - \lambda \int_{\left\lbrace \delta_{w}^{+} > \delta_{v}^{+}\right\rbrace } \dfrac{\delta_{v}^{-}}{\Vert y \Vert^{c+s}}dy  \nonumber\\
& & +  \lambda \int_{\left\lbrace \delta_{w}^{+} > \delta_{v}^{+} \right\rbrace } \dfrac{v^{-}\left( x + y\right) + v^{-}\left( x - y\right) }{\Vert y \Vert^{c+s}}dy
\nonumber  \\
& \leq & \Lambda \int_{\left\lbrace \delta_{w}^{+} > 0 \right\rbrace } \dfrac{v^{-}\left( x + y\right) + v^{-}\left( x - y\right) }{\Vert y \Vert^{c+s}}dy.
\end{eqnarray}
Analogously, we get
\begin{eqnarray}
\label{Harnack Inequality Estimate 7}
I_{2} & = &  \Lambda \int_{\left\lbrace \delta_{v}^{-} > 0\right\rbrace \cap \left\lbrace \delta_{w}^{-} \neq \delta^{-}_{v} \right\rbrace } \dfrac{\delta_{v}^{-} - \delta_{w}^{-}}{\Vert y \Vert^{c+s}}dy \nonumber\\  
&  & + \Lambda \int_{\left\lbrace \delta_{v}^{-} = 0 \right\rbrace \cap \left\lbrace \delta_{w}^{-} \neq \delta^{-}_{v} \right\rbrace} \dfrac{v^{-}\left( x + y\right) + v^{-}\left( x - y\right) }{\Vert y \Vert^{c+s}}dy \nonumber \\ 
& \leq & \Lambda \int_{\left\lbrace \delta_{v}^{-} > 0\right\rbrace \cap \left\lbrace \delta_{w}^{-} \neq \delta^{-}_{v} \right\rbrace } \dfrac{- \delta_{v} - \delta_{v}^{-}}{\Vert y \Vert^{c+s}}dy.
\end{eqnarray}
We also have 
\begin{eqnarray}
\label{Harnack Inequality Estimate 8}
- \delta_{v}^{-} - \delta_{w}^{-} & = & 2v\left( x \right) - \left( v\left( x + y \right) + v\left( x - y \right)\right) - \delta_{w}^{-} \nonumber\\ 
& = & 2v\left( x \right) - \left[ \left( v^{+}\left( x + y \right) + v^{+}\left( x - y \right)\right) \right. \nonumber \\
& & \left. - \left( v^{-}\left( x + y \right) + v^{-}\left( x - y \right)\right) \right] \nonumber\\ 
& = & \left( -\delta_{w} - \delta_{w}^{-} \right) +  v^{-}\left( x + y \right) + v^{-}\left( x - y \right) \nonumber\\ 
& = & -\delta^{+}_{w} + v^{-}\left( x + y \right) + v^{-}\left( x - y \right).
\end{eqnarray}
Then, from \eqref{Harnack Inequality Estimate 8} and \eqref{Harnack Inequality Estimate 7}, we obtain
\begin{eqnarray}
\label{Harnack Inequality Estimate 9}
I_{2} & \leq &  - \Lambda \int_{\left\lbrace \delta_{v}^{-} > 0\right\rbrace \cap \left\lbrace \delta_{w}^{-} \neq \delta^{-}_{v} \right\rbrace } \dfrac{ \delta_{w}^{+}}{\Vert y \Vert^{c+s}} dy \nonumber \\  
&  & + \Lambda \int_{\left\lbrace \delta_{v}^{-} > 0\right\rbrace \cap \left\lbrace \delta_{w}^{-} \neq \delta^{-}_{v} \right\rbrace} \dfrac{v^{-}\left( x + y\right) + v^{-}\left( x - y\right) }{\Vert y \Vert^{c+s}}dy \nonumber\\ 
& \leq & \Lambda \int_{\left\lbrace \delta_{w}^{-} \geq 0 \right\rbrace} \dfrac{v^{-}\left( x + y\right) + v^{-}\left( x - y\right) }{\Vert y \Vert^{c+s}}dy.
\end{eqnarray}
Hence, using \eqref{Harnack Inequality Estimate 6}, \eqref{Harnack Inequality Estimate 9}, and changing variables, we find
\begin{eqnarray*}
\dfrac{ \mathcal{M}^{-}w\left( x \right) - \mathcal{M}^{-}v\left( x \right)}{q_{\max, s}} & \leq &  \Lambda \int_{\mathbb{R}^{n}} \dfrac{v^{-}\left( x + y\right) + v^{-}\left( x - y\right) }{\Vert y \Vert^{c+s}}dy \\ 
& = & -2 \Lambda \int_{\left\lbrace v\left( x + y \right) < 0 \right\rbrace } \dfrac{v\left( x + y\right)}{\Vert y \Vert^{c+s}}dy.
\end{eqnarray*}
Moreover, if $x \in B_{\frac{r\theta}{2}}\left( x_{0} \right) $, we have
$$\dfrac{\mathcal{M}^{-}w\left( x \right) - \mathcal{M}^{-}v\left( x \right)}{q_{\max, s}}  \leq   2 \Lambda \int_{\mathbb{R}^{n} \setminus B_{r \theta }\left( x_{0} - x \right)} \dfrac{-v\left( x + y\right)}{\Vert y \Vert^{c+s}}dy$$ 
$$ \leq  2 \Lambda \int_{\mathbb{R}^{n} \setminus B_{r \theta }\left( x_{0} - x \right)} \dfrac{\left( u\left( x + y\right) - \left( 1 - \frac{\theta}{2} \right)^{-\tau}u\left( x_{0} \right)\right)^{+}}{\Vert y \Vert^{c+s}}dy.$$
If $\iota > 0$ is the largest value such that $u\left( x \right) \geq \iota \left( 1 - \vert 4 x \vert^{2} \right)$, then there is a point $x_{1} \in B_{\frac{1}{4}}$ such that $u\left( x_{1} \right) = \left( 1 - \vert 4 x_{1} \vert^{2} \right)$. Moreover, since $u\left( 0 \right)\leq 1$, we get $\iota \leq 1$. Then, we have 
$$q_{\max, s} \int_{\mathbb{R}^{n}} \dfrac{\delta^{-} \left( u, x_{1}, y\right)}{\Vert y \Vert^{c+s}}dy \leq q_{\max, s} \int_{\mathbb{R}^{n}} \dfrac{\delta^{-} \left( \left( 1 - \vert 4 x \vert^{2} \right), x_{1}, y\right)}{\Vert y \Vert^{c+s}}dy \leq C,$$
where the constant $C>0$ is independent of $s$. Moreover, since $\mathcal{M}^{-}u\left( x_{1}\right) \leq 1 $, we find
$$q_{\max, s} \int_{\mathbb{R}^{n}} \dfrac{\delta^{+} \left( u, x_{1}, y\right)}{\Vert y \Vert^{c+s}}dy \leq  C.$$
Recall that $u\left( x_{1} - y \right)\geq 0$ and $u\left( x_{1} \right)\leq 1$. Thus, we obtain
$$q_{\max, s} \int_{\mathbb{R}^{n}} \dfrac{\left(  u\left( x_{1} + y \right) - 2 \right)^{+} }{\Vert y \Vert^{c+s}}dy \leq  C.$$
Since $t>0$ is large enough, we can suppose that $ u\left( x_{0} \right) > 2$. Let 
$$x \in E^{\max}_{  \frac{r\theta}{2n},1}\left( x_{0} \right) \subset B_{\frac{r \theta}{2n} }\left( x_{0} \right) \subset B_{\frac{r \theta}{2} }\left( x_{0} \right)$$ 
and 
$$y \in \mathbb{R}^{n} \setminus B_{r \theta }\left( x_{0} - x \right) \subset \mathbb{R}^{n} \setminus E^{\max}_{ r \theta,1} \left( x_{0} - x \right).$$
Then, we have the inequalities 
\begin{eqnarray*}
\Vert y + x + x_{1} \Vert  \leq  C \left(  \Vert y \Vert + \Vert x \Vert + \Vert x_{1} \Vert \right)  \leq  C \Vert y \Vert + 2C 
\end{eqnarray*}
and 
\begin{eqnarray*}
| y_{i}| & \geq & | \left( y - \left( x_{0} - x \right)\right)_{i} | - | \left( x_{0} - x \right)_{i} |\\ 
& \geq & \frac{\left( r\theta\right)^{\frac{b_{\max}}{b_{i}}}}{n^{1/2}} - \left( \frac{ r\theta}{2n}\right)^{\dfrac{b_{max}}{b_{i}}} \\
& \geq &\frac{\left( r\theta\right)^{\frac{b_{\max}}{b_{i}}}}{n} - \frac{\left( r\theta\right)^{\frac{b_{\max}}{b_{i}}}}{2n} \\
&\geq & \frac{\left( r\theta\right)^{\frac{b_{\max}}{b_{i}}}}{2n}.
\end{eqnarray*}    
Then, taking into account the obvious equalities
$$u\left( x + y\right) - \left( 1 - \frac{\theta}{2} \right)^{-\tau}u\left( x_{0} \right) =  u\left( x + x_{1} + y - x_{1} \right) - \left( 1 - \frac{\theta}{2} \right)^{-\tau}u\left( x_{0} \right),$$
and
$$ \dfrac{1}{\Vert y \Vert^{c+s}} = \dfrac {1}{\Vert y + x + x_{1} \Vert^{c+s}}  \dfrac{\Vert y + x + x_{1} \Vert^{c+s}}{\Vert y \Vert^{c+s}}, $$
we estimate
$$2 \Lambda \int_{\mathbb{R}^{n} \setminus E^{\max}_{  r \theta,1} \left( x_{0} - x \right)} \dfrac{\left( u\left( x + y\right) - \left( 1 - \frac{\theta}{2} \right)^{-\tau}u\left( x_{0} \right)\right)^{+}}{\Vert y \Vert^{c+s}}dy  \leq  C_{1}\left( \theta r \right) ^{-\frac{b_{\max}}{2} \left( c + s \right)}.$$
Thus, we have
$$\mathcal{M}^{-}w \leq C_{1}\left( \theta r \right) ^{-\frac{b_{\max}}{2} \left( c + s \right)} \quad \text{in} \ E^{\max}_{  \frac{r\theta}{2n},1}\left( x_{0} \right).$$
Applying Theorem \ref{Point Estimates} to $w$ in $E^{\max}_{  \frac{r\theta}{2n},1}\left( x_{0} \right) \subset B_{\frac{r \theta }{2}}\left( x_{0} - x \right)$ and using that 
$$w\left( x_{0}\right) =  \left( \left( 1 - \frac{\theta}{2} \right)^{-\tau} - 1 \right) u\left( x_{0} \right),$$ 
we get
\begin{eqnarray}
\label{Harnack Inequality Estimate 16}
& & \left | \left\lbrace u >  \dfrac{u\left( x_{0} \right)}{2} \right\rbrace \cap E^{\max}_{  \frac{r\theta}{2n},\frac{1}{2}} \right | \nonumber \\
& =  &   \left | \left\lbrace w > \left[ \left( 1 - \frac{\theta}{2} \right)^{-\tau} - \dfrac{1}{2} \right] u\left( x_{0} \right) \right\rbrace \cap E^{\max}_{  \frac{r\theta}{2n},\frac{1}{2}} \right | \nonumber\\ 
& \leq &  C \left | E^{\max}_{  \frac{r\theta}{2n},\frac{1}{2}} \right |  \left[ \left( \left( 1 - \frac{\theta}{2} \right)^{-\tau} - \dfrac{1}{2} \right) u\left( x_{0} \right) + C_{1} \left( r\theta \right)^{-\frac{b_{\max}}{2} \left( c + s \right) + s\frac{b_{\max}}{2}} \right]^{\varepsilon} \nonumber\\ 
& & \cdot  \left[ \left( \left( 1 - \frac{\theta}{2} \right)^{-\tau} - \dfrac{1}{2} \right) u\left( x_{0} \right)\right]^{-\varepsilon}\nonumber \\ 
& = & \nonumber C \left | E^{\max}_{  \frac{r\theta}{2n},\frac{1}{2}} \right |  \left[ \left( \left( 1 - \frac{\theta}{2} \right)^{-\tau} - \dfrac{1}{2} \right) u\left( x_{0} \right) + C_{1}\left( r\theta \right)^{- \frac{b_{\max}}{2}c } \right]^{\varepsilon}\nonumber \\ 
& & \cdot  \left[ \left( \left( 1 - \frac{\theta}{2} \right)^{-\tau} - \dfrac{1}{2} \right) u\left( x_{0} \right)\right]^{-\varepsilon}.
\end{eqnarray}
Thus, using \eqref{Harnack Inequality Estimate 16} and the elementary inequalities
$$ \left[ \left( \left( 1 - \frac{\theta}{2} \right)^{-\tau} - \dfrac{1}{2} \right) u\left( x_{0} \right) + C_{1}\left( r\theta \right)^{ -\frac{b_{\max}}{2}c} \right]^{\varepsilon}$$ 
$$\leq \left( \left( 1 - \frac{\theta}{2} \right)^{-\tau} - \dfrac{1}{2} \right)^{\varepsilon} u\left( x_{0} \right)^{\varepsilon} +  C_{1} \left( r\theta \right)^{- \frac{b_{\max}}{2}c \varepsilon}$$
and
$$\left( 1 - \frac{\theta}{2} \right)^{-\tau} - \frac{1}{2} = \left( 1 - \frac{\theta}{2} \right)^{-\frac{c b_{\max}}{2\varepsilon}} - \frac{1}{2} \geq \frac{1}{2},$$
for $\theta > 0$ sufficiently small, and yet
$$C_{3} \theta^{-\frac{c b_{\max}}{2}\varepsilon} r^{-\frac{c b_{\max}}{2}\varepsilon}u\left( x_{0} \right)^{-\varepsilon} \left(  \left( 1 - \frac{\theta}{2} \right)^{-\tau} - \frac{1}{2}\right)^{-\varepsilon} $$
$$ \leq  \nonumber C_{4} \theta^{-\frac{c b_{\max}}{2}\varepsilon} r^{-\frac{c b_{\max}}{2}\varepsilon}u\left( x_{0} \right)^{-\varepsilon} \leq C_{5}\theta^{-\frac{c b_{\max}}{2}\varepsilon} t^{-\varepsilon} d^{\left( 1 -  \varepsilon \right)\frac{cb_{\max}}{2}}  \leq  C_{6}\theta^{-\frac{c b_{\max}}{2}\varepsilon\varepsilon} t^{-\varepsilon}, $$
we obtain
$$\left | \left\lbrace u >  \dfrac{u\left( x_{0} \right)}{2} \right\rbrace \cap E^{\max}_{  \frac{r\theta}{2n},\frac{1}{2}} \right | \leq C \left | E^{\max}_{  \frac{r\theta}{2n},\frac{1}{2}} \right| \left[ \left( \left( 1 - \frac{\theta}{2} \right)^{-\tau} - 1 \right)^{\varepsilon} + \theta^{-\frac{c b_{\max}}{2}\varepsilon} t^{-\varepsilon} \right]. $$
Now we choose $\theta > 0$ sufficiently small such that
\begin{eqnarray*}
C \left |E^{\max}_{  \frac{r\theta}{2n},\frac{1}{2}} \right | \left[ \left( 1 - \frac{\theta}{2} \right)^{-\tau} - 1 \right]^{\varepsilon} & = & C \left |E^{\max}_{  \frac{r\theta}{2n},\frac{1}{2}} \right | \left[ \left( 1 - \frac{\theta}{2} \right)^{-\frac{c b_{\max}}{2\varepsilon}} - 1 \right]^{\varepsilon}\\
&  \leq & \dfrac{1}{4} \left |E^{\max}_{  \frac{r\theta}{2n},\frac{1}{2}} \right |.
\end{eqnarray*}
Having fixed $\theta > 0$ (independently of $t$), we take $t>0$ sufficiently large such that
$$C \left |E^{\max}_{\frac{r \theta }{2n},\frac{1}{2}} \right | \theta^{-\frac{c b_{\max}}{2}\varepsilon} t^{-\varepsilon} \leq \dfrac{1}{4} \left |E^{\max}_{\frac{r \theta }{2n},\frac{1}{2}} \right |.$$
Then, using \eqref{Harnack Inequality Estimate 16}, we find
$$ \left | \left\lbrace u >  \dfrac{u\left( x_{0} \right)}{2} \right\rbrace \cap E^{\max}_{  \frac{r\theta}{2n},\frac{1}{2}} \right |  \leq  \dfrac{1}{4} \left |E^{\max}_{  \frac{r\theta}{2n},\frac{1}{2}} \right |.$$
Hence, we have, for $t > 0$ large,
\begin{eqnarray*} 
\left | \left\lbrace u <  \dfrac{u\left( x_{0} \right)}{2} \right\rbrace \cap E^{\max}_{  \frac{r\theta}{2n},\frac{1}{2}} \right |  & \geq &  c \theta^ { \frac{b_{\max}}{2} c} \left |E^{\max}_{  r,1} \right | \\& \geq &  c_{2} \left |E^{\max}_{ r,1} \right |,
\end{eqnarray*}
which is a contradiction to \eqref{Harnack Inequality Estimate 1}.
\end{proof}

As a consequence of the Harnack inequality we obtain the $C^{\gamma}$ regularity.
  
\begin{theorem}[$C^{\gamma}$ estimates] \label{estimates}
\label{ estimates}
Let $u$ be a bounded function such that
$$ \mathcal{M}^{-} u \leq C_{0} \quad \text{and} \quad \mathcal{M}^{+} u \geq - C_{0}  \ \text{in}  \ B_{1}.$$
If $ 0< s_{0} < s < \frac{4}{b_{\max}}$, then there is a positive constant $0 < \gamma < 1$, that depends only $n$, $\lambda$, $\Lambda$, $b_{\min}$, $b_{\max}, s_{0}$, and $s$, such that $u \in C^{\gamma}\left( B_{1/2}\right)$ and 
$$| u |_{C^{\gamma}\left( B_{1/2}\right)} \leq C \left( \sup \limits_{\mathbb{R}^{n}}| u | + C_{0}  \right),$$
for some constant $C>0$.
\end{theorem}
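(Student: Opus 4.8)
\emph{Strategy.} The plan is to reproduce the oscillation--decay scheme of the proof of Theorem \ref{Theorem particular case}, with the Growth Lemma \ref{anisotropic-conditions} replaced by a diminish--of--oscillation statement for viscosity solutions extracted, in the standard way of \cite{CS}, from the Harnack inequality (Theorem \ref{Harnack Inequality}) together with the pointwise estimate (Theorems \ref{Point Estimates}--\ref{Point Estimates 3}). First I would normalise: since $\mathcal{M}^{\pm}$ are positively $1$--homogeneous and $\mathcal{M}^{-}(-u)=-\mathcal{M}^{+}u$, dividing $u$ by $2(\sup_{\mathbb{R}^{n}}|u|+C_{0})$ reduces matters to $\|u\|_{L^{\infty}(\mathbb{R}^{n})}\le\tfrac12$, $\mathcal{M}^{-}u\le 1$ and $\mathcal{M}^{+}u\ge -1$ in $B_{1}$. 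It then suffices to establish, uniformly in $x_{0}$ in a neighbourhood of $B_{1/2}$, an anisotropic bound $|u(x)-u(x_{0})|\le C\Vert x-x_{0}\Vert^{\gamma_{0}}$ for $x$ near $x_{0}$, with $\gamma_{0}\in(0,1)$ depending only on the structural constants: from the definition of $\Vert\cdot\Vert$ one has $\Vert z\Vert\le\sqrt{n}\,|z|^{b_{\min}/2}$ for $|z|\le 1$, so such an estimate upgrades to $u\in C^{\gamma}(B_{1/2})$ with $\gamma=\gamma_{0}b_{\min}/2$, and a standard covering argument restores the factor $\sup_{\mathbb{R}^{n}}|u|+C_{0}$ in the seminorm.

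Fix $x_{0}$ and a small universal $\rho\in(0,1)$. Exactly as in Theorem \ref{Theorem particular case} I would construct a nondecreasing sequence $(m_{k})$ and a nonincreasing sequence $(M_{k})$ with $M_{k}-m_{k}=\rho^{\gamma_{0}k}$ and $m_{k}\le u\le M_{k}$ in $E^{\max}_{\rho^{k},1}(x_{0})$, taking $m_{k}=\inf_{\mathbb{R}^{n}}u$ and $M_{k}=m_{k}+\rho^{\gamma_{0}k}$ for $k\le 0$ and arguing by induction for $k\ge 1$. In the inductive step, with $\mathfrak{m}=\tfrac12(m_{k}+M_{k})$, one of $|\{u\le\mathfrak{m}\}\cap E^{\max}_{\rho^{k},1}(x_{0})|$ and $|\{u\ge\mathfrak{m}\}\cap E^{\max}_{\rho^{k},1}(x_{0})|$ is at least $\tfrac12|E^{\max}_{\rho^{k},1}|$; replacing $u$ by $-u$ if necessary (legitimate since $\mathfrak{L}_{0}$ is symmetric) we assume the former. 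The goal is to show $u\le M_{k}-\mu\rho^{\gamma_{0}k}$ on $E^{\max}_{\rho^{k+1},1}(x_{0})$ for a universal $\mu>0$; then $M_{k+1}=M_{k}-\mu\rho^{\gamma_{0}k}$ and $m_{k+1}=m_{k}$ close the induction provided $\gamma_{0}$ is small enough that $\rho^{\gamma_{0}}\ge 1-\mu$.

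To get this drop I would rescale $E^{\max}_{\rho^{k},1}(x_{0})$ to $B_{1}$ through $T_{\max,\rho^{k}}$ --- under which $\mathcal{M}^{-}$ transforms cleanly, by the identity in the Remark preceding Theorem \ref{Point Estimates} --- and apply the Harnack inequality to the rescaled $u-m_{k}$ and the pointwise estimate to the rescaled $M_{k}-u$. The point requiring care, and the one I expect to be the main obstacle, is that Theorems \ref{Harnack Inequality} and \ref{Point Estimates} need global nonnegativity on $\mathbb{R}^{n}$, whereas the rescaled $u-m_{k}$ is nonnegative only inside $B_{1}$. I would handle this as in the proof of Theorem \ref{Harnack Inequality}: work with $w:=(u-m_{k})^{+}$ (and $(M_{k}-u)^{+}$) and use the inequality $\mathcal{M}^{-}w(x)\le\mathcal{M}^{-}(u-m_{k})(x)+2\Lambda\,q_{\max,s}\int_{\mathbb{R}^{n}}(u-m_{k})^{-}(x+y)\,\Vert y\Vert^{-(c+s)}\,dy$ proved there. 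The inductive hypotheses $m_{j}\le u\le M_{j}$ on $E^{\max}_{\rho^{j},1}(x_{0})$ for all $j\le k$, together with $|u|\le\tfrac12$ on $\mathbb{R}^{n}$, show that after rescaling $(u-m_{k})^{-}$ grows at most polynomially away from $B_{1}$, at a rate governed by $\gamma_{0}$; decomposing into dyadic $\Theta$--shells and using $\int_{\Theta_{2R}\setminus\Theta_{R}}\Vert y\Vert^{-(c+s)}\,dy\le CR^{-s}$, the tail integral is bounded, uniformly in $k$ and in $s\in(s_{0},4/b_{\max})$, as soon as $\gamma_{0}$ lies below a positive threshold depending only on $n$, $b_{\min}$, $b_{\max}$, $s_{0}$. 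Fixing such a $\gamma_{0}$ turns the tail into a controlled error, so the point estimate still forces the rescaled $M_{k}-u$ to be bounded below at the centre and the Harnack inequality still propagates this to the required uniform drop of $u$ on $E^{\max}_{\rho^{k+1},1}(x_{0})$.

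Once $(m_{k})$ and $(M_{k})$ are in place, for any $x$ with $x-x_{0}\in E^{\max}_{\rho^{k-1},1}\setminus E^{\max}_{\rho^{k},1}$ the bound $\operatorname{osc}_{E^{\max}_{\rho^{k},1}(x_{0})}u\le\rho^{\gamma_{0}k}$ gives $|u(x_{0})-u(x)|\le\rho^{\gamma_{0}(k-1)}\le C\Vert x-x_{0}\Vert^{\gamma_{0}}$ via Lemma \ref{Fundamental Geometry}; converting as in the first paragraph and undoing the normalisation yields $u\in C^{\gamma}(B_{1/2})$ with the stated seminorm bound. In short, the oscillation--decay machinery is a routine adaptation of what is already done for Theorems \ref{Theorem particular case} and \ref{Harnack Inequality}, and the genuine difficulty is the uniform control of the nonlocal tail produced by the coarser scales against the anisotropic kernel $\Vert\cdot\Vert^{-(c+s)}$, which is precisely what fixes the admissible range of the H\"older exponent.
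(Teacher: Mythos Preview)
Your proposal is correct and is precisely the standard Caffarelli--Silvestre oscillation-decay scheme that the paper is invoking: in the paper the theorem is stated immediately after the Harnack inequality with the single sentence ``As a consequence of the Harnack inequality we obtain the $C^{\gamma}$ regularity'' and no further details, so your outline is in fact more complete than what appears there. One small remark: the combination of Harnack on $u-m_k$ and the point estimate on $M_k-u$ is redundant --- the argument by contradiction you sketch (if $(M_k-u)^+$ were small at some point of the inner ellipsoid, the point estimate would force $|\{M_k-u\ge (M_k-m_k)/2\}|$ to be small, contradicting the measure alternative) already works at \emph{every} point of $E^{\max}_{\rho^{k+1},1}(x_0)$, so the Harnack step is unnecessary; this is how \cite{CS} proceeds.
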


The next result is a consequence of the arguments used in \cite{CS, CLU} and Theorem \ref{estimates}. As in \cite{CS, CLU}, if we suppose a modulus of continuity of $\mathcal{K}_{\alpha\beta}$ in measure, then as to make sure that faraway oscillations tend to cancel out, we obtain the interior $C^{1, \gamma}$ regularity for solutions of equation $\mathcal{I}u = 0$. 

\begin{theorem}[$C^{1,\gamma}$ estimates]
Suppose that $0 < s_{0} < s< \frac{4}{b_{\max}}$. There exists a constant $\tau_{0} > 0$, that depends only on $\lambda$, $\Lambda$, $n$, $b_{\min}$, $b_{\max}, s_{0}$ and $s$, such that
$$ \int_{\mathbb{R}^{n}\setminus B^{\tau_{0}}}\dfrac{| \mathcal{K}_{\alpha\beta}\left( y \right) - \mathcal{K}_{\alpha\beta}\left( y - h \right)|}{|h|} dy \leq C_{0}, \quad \ \text{whenever} \ |h| < \frac{\tau_{0}}{2} . $$
If $u$ is a bounded function satisfying $\mathcal{I}u=0$ in $B_{1}$, then there is a constant $0 < \gamma < 1$, that depends only $n$, $\lambda$, $\Lambda$, $b_{\min}$, $b_{\max}, s_{0}$ and $s$, such that $u \in C^{1,\gamma}\left( B_{1/2}\right)$ and 
$$| u |_{C^{1,\gamma}\left( B_{1/2}\right)} \leq C  \sup \limits_{\mathbb{R}^{n}}| u |,$$
for some constant $C=C\left( n, \lambda, \Lambda, b_{\min}, b_{\max}, s_{0}, s, C_{0} \right)>0$.
\end{theorem}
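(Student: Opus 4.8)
After normalizing $\sup_{\mathbb{R}^{n}}|u|\leq 1$, the strategy is to upgrade the $C^{\gamma}$ bound of Theorem \ref{estimates} to a $C^{1,\gamma}$ bound by iterating an improvement of flatness, following \cite[Section~9]{CS} and \cite{CLU}. Comparing $u$ with the constant $0$ in Lemma \ref{Comp. princ.} gives $\mathcal{M}^{+}u\geq 0$ and $\mathcal{M}^{-}u\leq 0$ in $B_{1}$, hence $u\in C^{\gamma}(B_{3/4})$ by Theorem \ref{estimates}; moreover, since the kernels $\mathcal{K}_{\alpha\beta}$ do not depend on $x$, the operator $\mathcal{I}$ is translation invariant. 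What must be shown is that for each $x_{0}\in B_{1/2}$ there is an affine function $\ell_{x_{0}}$ with $\sup_{B_{r}(x_{0})}|u-\ell_{x_{0}}|\leq C\,r^{1+\gamma}$, with $C$ and $\gamma$ uniform; this is equivalent to the asserted estimate.

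The core is an improvement lemma, established at the origin after an anisotropic rescaling: there exist $\rho\in(0,1)$, $\gamma\in(0,1)$ and $C>0$, depending only on the stated parameters, such that if $v$ is bounded, $\mathcal{I}v=0$ in $B_{1}$ for kernels in our class satisfying the modulus-of-continuity hypothesis, $\|v\|_{L^{\infty}(B_{1})}\leq 1$, and $v$ has controlled growth $|v(y)|\leq 2\|y\|^{1+\gamma}-1$ for $\|y\|\geq 1$, then there is an affine $\ell$ with $\sup_{E_{\rho,1}}|v-\ell|\leq\rho^{1+\gamma}$ and $|\ell(0)|+|\nabla\ell|\leq C$. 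To prove it I would study the anisotropic incremental quotients $w_{h}(y):=\bigl(v(y+h)-v(y)\bigr)\|h\|^{-\gamma}$ for small $h$. By translation invariance, $v(\cdot+h)$ solves the same equation, so Lemma \ref{Comp. princ.} yields $\mathcal{M}^{+}w_{h}\geq-\varepsilon_{h}$ and $\mathcal{M}^{-}w_{h}\leq\varepsilon_{h}$ in $B_{3/4}$, where $\varepsilon_{h}$ is precisely the contribution of the nonlocal tail: after truncating $w_{h}$ outside $B_{3/4}$ and translating the variable in the tail integral, $\varepsilon_{h}$ is bounded by a weighted integral of $|\mathcal{K}_{\alpha\beta}(y)-\mathcal{K}_{\alpha\beta}(y-h)|\,\|h\|^{-1}$ against the growth weight $\|y\|^{1+\gamma}$ of $v$, and the modulus-of-continuity hypothesis (together with the decay in \eqref{Kernel cond 2}) forces $\varepsilon_{h}\leq C$ uniformly in small $h$. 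Theorem \ref{estimates} then gives $[w_{h}]_{C^{\gamma'}(B_{1/2})}\leq C$ uniformly in $h$, i.e. $v$ is one order smoother; letting $h\to 0$ and taking $\rho$ small, $\ell(y):=v(0)+\nabla v(0)\cdot y$ does the job.

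Granting the improvement lemma, I would iterate it along the anisotropic scales $r_{k}=\rho^{k}$ by means of the dilation $T_{\beta,\rho^{k}}$. If $\ell_{k}$ is affine with $\sup_{E_{\rho^{k},1}}|u-\ell_{k}|\leq\rho^{k(1+\gamma)}$, set $v(y):=\rho^{-k(1+\gamma)}(u-\ell_{k})\bigl(T_{\beta,\rho^{k}}y\bigr)$. Because $\ell_{k}$ is affine and the kernels are symmetric, $\delta(\ell_{k},\cdot,\cdot)\equiv 0$, so $v$ solves $\widetilde{\mathcal{I}}v=0$ with kernels $\rho^{kc}\mathcal{K}_{\alpha\beta}\bigl(T_{\beta,\rho^{k}}\,\cdot\bigr)$; using $\|T_{\beta,r}y\|=r\|y\|$ and $\det T_{\beta,r}=r^{c}$ (Lemma \ref{Fundamental Geometry}), these kernels lie in our class up to the overall constant $\rho^{-ks}$, which is irrelevant because the equation is homogeneous. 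Telescoping the bounds on $\ell_{j+1}-\ell_{j}$ supplies the required growth of $v$, so the improvement lemma produces $\ell_{k+1}$, and summing the resulting geometric series gives $\ell_{\infty}(y)=u(0)+\nabla u(0)\cdot y$ with $\sup_{E_{r,1}}|u-\ell_{\infty}|\leq C\,r^{1+\gamma}$. Running this at every $x_{0}\in B_{1/2}$ (the hypotheses being translation invariant) and passing from anisotropic to Euclidean balls via Lemma \ref{Fundamental Geometry} (with the usual exponent adjustment) yields $u\in C^{1,\gamma}(B_{1/2})$ and $|u|_{C^{1,\gamma}(B_{1/2})}\leq C\sup_{\mathbb{R}^{n}}|u|$.

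The main obstacle is the far-field analysis in the anisotropic setting. First, $T_{\beta,r}$ does not act as a scalar on $|h|$ (only on $\|\cdot\|$), so one must verify that the modulus-of-continuity hypothesis is inherited \emph{uniformly in $k$} by the rescaled kernels $\rho^{kc}\mathcal{K}_{\alpha\beta}(T_{\beta,\rho^{k}}\,\cdot)$; this is what fixes the admissible $\tau_{0}$ and is the place where the anisotropy genuinely interacts with the hypothesis. Second, the weighted tail bound for $\varepsilon_{h}$ requires $\|y\|^{1+\gamma}$ to be integrable against $|\mathcal{K}_{\alpha\beta}(y)-\mathcal{K}_{\alpha\beta}(y-h)|\,\|h\|^{-1}$ on $\mathbb{R}^{n}\setminus E_{r,1}$, which, combined with the pointwise decay $\mathcal{K}_{\alpha\beta}(y)\leq\Lambda q_{\max,s}\|y\|^{-(c+s)}$ of \eqref{Kernel cond 2}, is exactly what constrains $\gamma$ in terms of $s_{0}$, $b_{\min}$ and $b_{\max}$. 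Making these two estimates quantitative, along with the anisotropic covering and the geometry of the sets $E_{r,l}$, is the technical heart; the remaining iteration is routine once Theorem \ref{estimates} is in hand.
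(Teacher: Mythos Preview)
Your plan is correct and aligns with the paper's own treatment: the paper does not supply a proof but simply states that the result ``is a consequence of the arguments used in \cite{CS, CLU} and Theorem \ref{estimates},'' and your outline is precisely the incremental-quotient bootstrap of \cite[\S 13]{CS} adapted to the anisotropic geometry via $T_{\beta,r}$, with the modulus-of-continuity hypothesis used exactly where you indicate (to control the tail in $\mathcal{M}^{\pm}w_{h}$). The only cosmetic discrepancies are that the reference is \S 13 of \cite{CS} rather than \S 9, and the theorem's hypothesis is stated in terms of the Euclidean $|h|$ rather than the anisotropic $\|h\|$, so when you check that the rescaled kernels inherit the condition you should track $|h|$ through $T_{\beta,\rho^{k}}$; otherwise nothing further is needed.
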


\begin{remark} As in \cite{CLU}, we can also obtain $C^{\gamma}$ and $C^{1, \gamma}$ estimates for truncated kernels, i.e., kernels that satisfy \eqref{Kernel cond 2} only in a neighborhood of the origin. Let $\mathfrak{L}$ be the class of operators $L_{\alpha\beta}$ such that the corresponding kernels $\mathcal{K}_{\alpha\beta}$ have the form
$$\mathcal{K}_{\alpha\beta}\left( y \right) = \mathcal{K}_{\alpha\beta, 1}\left( y \right) + \mathcal{K}_{\alpha\beta, 2}\left( y \right) \geq 0,$$
 where
$$\dfrac{\lambda q_{\max, s} }{\Vert y \Vert^{c+s}} \leq \mathcal{K}_{\alpha\beta, 1}\left( y \right) \leq      \dfrac{\Lambda q_{\max, s}}{\Vert y \Vert^{c+s}}$$
and $\mathcal{K}_{\alpha\beta, 2} \in L^{1}\left( \mathbb{R}^{n}\right)$ with $\Vert \mathcal{K}_{\alpha\beta, 2} \Vert_{L^{1}\left( \mathbb{R}^{n}\right)}\leq c_{1}$, for some constant $c_{1}>0$. The class $\mathfrak{L}$ is larger than $\mathfrak{L}_{0}$ but the extremal operators $\mathcal{M}_{\mathfrak{L}}^{-}$ and $\mathcal{M}_{\mathfrak{L}}^{+}$ are controlled by $\mathcal{M}^{+}$ and $\mathcal{M}^{-}$ plus the $L^{\infty}$ norm of $u$ (see Lemma 14.1 and Corollary 14.2 in \cite{CS, CLU}). Thus the interior $C^{\gamma}$ and $C^{1,\gamma}$ regularity follow.
\end{remark}

 \subsection*{Acknowledgments}

 \hspace{0.65cm} EBS and RAL thank the Analysis research group of UFC for fostering a pleasant and productive scientific atmosphere. EBS supported by CAPES-Brazil.

\end{document}